\begin{document}

\newtheorem{theorem}{Theorem}[section]
\newtheorem{problem}[theorem]{Problem}
\newtheorem{corollary}[theorem]{Corollary}
\newtheorem{observation}[theorem]{Observation}
\newtheorem{definition}[theorem]{Definition}
\newtheorem{conjecture}[theorem]{Conjecture}
\newtheorem{question}[theorem]{Question}
\newtheorem{lemma}[theorem]{Lemma}
\newtheorem{proposition}[theorem]{Proposition}
\newtheorem{example}[theorem]{Example}
\newenvironment{proof}{\noindent {\bf
Proof.}}{\hfill $\square$\par\medskip}
\newcommand{\remark}{\medskip\par\noindent {\bf Remark.~~}}
\newcommand{\pp}{{\it p.}}
\newcommand{\de}{\em}

\title{Extremal graphs for  edge blow-up of graphs\thanks{This work is supported by the National Natural Science Foundation of China (No. 11901554) and Science and Technology Commission
of Shanghai Municipality (No. 18dz2271000, 19jc1420100)}}
\author{Long-Tu Yuan  \\
{\small School of Mathematical Sciences and Shanghai Key Laboratory of PMMP}\\
{\small East China Normal University}\\
{\small 500 Dongchuan Road, Shanghai, 200240, P.R. China  }\\
{\small Email: ltyuan@math.ecnu.edu.cn}\\
}
\date{}
\maketitle
\begin{abstract}
Given a graph $H$ and an integer $p$, the  edge blow-up $H^{p+1}$ of $H$ is the graph obtained from replacing each edge in $H$ by a clique of order $p+1$ where the new vertices of the cliques are all distinct. The Tur\'{a}n numbers for edge blow-up of matchings were first studied by Erd\H{o}s and Moon. In this paper, we determine the range of the Tur\'{a}n numbers for edge blow-up of all bipartite graphs. Moreover, we characterize the extremal graphs for edge blow-up of all non-bipartite graphs. Our results also extend several known results, including the Tur\'{a}n numbers for edge blow-up of stars, paths and cycles. The method we used can also be applied to find a family of counter-examples to a conjecture posed by Keevash and Sudakov in 2004 concerning the maximum number of edges not contained in any monochromatic copy of $H$ in a $2$-edge-coloring of $K_n$.
\end{abstract}

{{\bf Key words:} Tur\'{a}n number; edge blow-up; Keevash-Sudakov conjecture.}

{{\bf AMS Classifications:} 05C35; 05D99.}
\vskip 0.5cm

\section{Introduction}
\noindent Given a family of graphs $\mathcal{H}$, we say a graph $G$ is $\mathcal{H}$-free ($H$-free if $\mathcal{H}=\{H\}$) if $G$ does not contain any copy of $H\in \mathcal{H}$ as a subgraph.
The {\it Tur\'{a}n number} of a family of graphs $\mathcal{H}$, denote as ex$(n,\mathcal{H})$, is the maximum number of edges in an $\mathcal{H}$-free graph $G$ of order $n$.
Denote by EX$(n,\mathcal{H})$ the set of $\mathcal{H}$-free graphs on $n$ vertices with ex$(n,\mathcal{H})$ edges and call a graph in EX$(n,\mathcal{H})$ an extremal graph for $H$.
We simply use ex$(n,H)$ and  EX$(n,H)$  instead of ex$(n,\{H\})$ and  EX$(n,\{H\})$ respectively if $\mathcal{H}=\{H\}$.

In 1941, Tur\'{a}n \cite{turan1941} proved that the unique extremal graph without containing a clique on $p+1\geq 3$ vertices is
the complete $p$-partite graph on $n$ vertices which is balanced, in that the partite sizes are as equal as possible. This balanced complete $p$-partite graph on $n$ vertices is the Tur\'{a}n graph $T_p(n)$ and let $t_p(n)=e(T_p(n))$ be the number of edges of $T_p(n)$.

Later, in 1946, Erd\H{o}s and Stone \cite{erdHos1946} proved the following well-known theorem.
\begin{theorem}[Erd\H{o}s and Stone \cite{erdHos1946}]\label{erdos-stone}
For all integers $p\geq 2$ and $N\geq 1$, and every $\epsilon >0$, there exists an integer $n_0$ such that every graph with $n\geq n_0$ vertices and at least
\[t_{p-1}(n)+\epsilon n^2\]
edges contains $T_p(Np)$ as a subgraph.
\end{theorem}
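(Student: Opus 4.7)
The plan is to prove the Erd\H{o}s--Stone theorem by induction on $p$. The base case $p=2$ reduces to the statement that any graph $G$ with $e(G)\geq \varepsilon n^2$ contains $K_{N,N}=T_2(2N)$, which is an immediate consequence of the Kov\'{a}ri--S\'{o}s--Tur\'{a}n bound $\mathrm{ex}(n,K_{N,N})=O(n^{2-1/N})=o(n^2)$, valid once $n$ is large in terms of $\varepsilon$ and $N$.

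For the inductive step, suppose the theorem holds for $p-1$ with every choice of parameters. Given $G$ with $e(G)\geq t_{p-1}(n)+\varepsilon n^2$, I would choose a very large constant $K=K(N,p,\varepsilon)$ and apply the inductive hypothesis with target $T_{p-1}((p-1)K)$ (and a slightly smaller $\varepsilon$, since $t_{p-1}(n)\geq t_{p-2}(n)$). For $n$ sufficiently large this locates a copy $T_{p-1}((p-1)K)\subseteq G$ with parts $V_1,\dots,V_{p-1}$ of size $K$ each. Let $U=V(G)\setminus\bigcup_i V_i$, so $|U|=n-(p-1)K\geq n/2$.

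The combinatorial core is an averaging argument. Since $G$ has at least $\varepsilon n^2$ more edges than $t_{p-1}(n)$, and $e(G[\bigcup V_i])\leq t_{p-1}((p-1)K)$, a comparison with the optimal $(p-1)$-partite partition of $V(G)$ shows that the number of edges from $U$ to $\bigcup V_i$, plus the edges inside $U$, exceeds what a balanced $(p-1)$-partition on $V(G)$ would allow by at least $\tfrac{\varepsilon}{2}n^2$. From this one extracts a subset $U^*\subseteq U$ of size $\Omega(n)$ consisting of vertices $u$ with $|N(u)\cap V_i|\geq \alpha K$ for every $i\in[p-1]$, where $\alpha=\alpha(\varepsilon,p)>0$.

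The remaining step promotes the many ``good'' vertices in $U^*$ to a clean blow-up. Fix an $N$-subset $W_1\subseteq V_1$: by double counting, the number of $u\in U^*$ containing a fixed $N$-subset of $V_1$ in its neighborhood is, on average, at least $|U^*|\binom{\alpha K}{N}/\binom{K}{N}\geq |U^*|\alpha^N/2$. Applying this pigeonhole step successively to $V_1,\dots,V_{p-1}$ isolates $N$-subsets $W_1,\dots,W_{p-1}$ and a set $U^{**}\subseteq U^*$ of size at least $|U^*|\alpha^{N(p-1)}/2^{p-1}$ whose vertices are jointly adjacent to every $W_i$. Taking $K$ (hence $|U^{**}|$) large enough to ensure $|U^{**}|\geq N$, any $N$ vertices $u_1,\dots,u_N\in U^{**}$ together with $W_1,\dots,W_{p-1}$ form the required $T_p(Np)$.

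The main obstacle is the parameter bookkeeping in Step~3, namely producing the linear-sized set $U^*$ of vertices good for \emph{all} parts simultaneously. A naive application of averaging only guarantees many vertices good for at least one part. The right way is to compare $e(G)$ with the edge count of the optimal $(p-1)$-partition that assigns each $u\in U$ to the part in which it has fewest neighbors; the defect between $e(G)$ and this count must be absorbed by vertices having at least $\alpha K$ neighbors in every $V_i$, and then choosing $\alpha$ small compared to $\varepsilon$ controls this deficit. Once this is handled, everything else is a clean double count.
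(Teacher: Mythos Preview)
The paper does not prove Theorem~\ref{erdos-stone}; it is quoted as a classical result of Erd\H{o}s and Stone and used as a black box throughout. So there is no proof in the paper to compare against, and I assess your argument on its own.

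Your Step~3 has a genuine gap: the global surplus $e(G)-t_{p-1}(n)\ge\varepsilon n^2$ cannot be converted into a large set $U^*$ merely by comparing against a $(p-1)$-partition extending $V_1,\dots,V_{p-1}$. Take $G$ to be $T_{p-1}(n)$ with a clique $C$ of order roughly $\sqrt{2\varepsilon}\,n$ added inside the first colour class, so that $e(G)\ge t_{p-1}(n)+\varepsilon n^2$. The inductive hypothesis only guarantees \emph{some} copy of $T_{p-1}((p-1)K)$, and it may return parts $V_1,\dots,V_{p-1}$ lying in the respective colour classes with $V_1\cap C=\varnothing$ (possible once $\varepsilon<1/(2(p-1)^2)$). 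Then every $u\in U$ lying in the $j$-th colour class satisfies $|N(u)\cap V_j|=0$, so $U^*=\varnothing$. In your partition $P_i=V_i\cup U_i$ the entire $\varepsilon n^2$ surplus sits among edges inside $U_1$ (namely the clique $C$), none of which meet $\bigcup_i V_i$; the defect is not ``absorbed by good vertices'' at all.

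The standard remedy is a preliminary minimum-degree reduction: iteratively delete vertices of degree below $\bigl(1-\frac{1}{p-1}+\frac{\varepsilon}{2}\bigr)n$; a short calculation shows the surviving graph $G'$ still has $\Omega(n)$ vertices and the same relative edge surplus. Run the induction inside $G'$ to obtain $V_1,\dots,V_{p-1}$. Now each $v\in V_i$ has at most $\bigl(\frac{1}{p-1}-\frac{\varepsilon}{2}\bigr)|V(G')|$ non-neighbours in $G'$, so double-counting non-adjacent pairs with one end in $V_i$ shows that at most $\bigl(\frac{1}{p-1}-\frac{\varepsilon}{4}\bigr)|V(G')|$ vertices have fewer than $N$ neighbours in $V_i$ (for $K$ large); summing over $i$ leaves $\Omega(|V(G')|)$ good vertices, after which your Step~4 pigeonhole finishes the proof as written.
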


Let $\mathcal{F}$ be a family of graphs, the {\it subchromatic number } $p(\mathcal{F})$ of $\mathcal{F}$ is defined by
\[p(\mathcal{F})=\min\{\chi(F):F\in \mathcal{F}\}-1,\]
where $\chi(F)$ is the chromatic number of $F$. The classical Erd\H{o}s-Stone-Simonovits theorem \cite{erdHos1946,erdHos1966} states that
$$\mbox{ex}(n,\mathcal{F})=\left(1-\frac{1}{p(\mathcal{F})}\right){n \choose 2}+o(n^2).$$

If $\mathcal{F}$ contains a bipartite graph, then $p(\mathcal{F})=1$ and ex$(n,\mathcal{F})=o(n^2) $. For this degenerate (bipartite) extremal graph problem, there is an excellent survey by F\"{u}redi and Simonovits \cite{Furedi2013}. Let $G$ be a graph with $\chi(G)=p+1\geq 3$. If there is an edge $e$ such that $\chi(G-\{e\})=p$, then we say that $G$ is {\it edge-critical}  and $e$ is a {\it critical edge}. The Tur\'{a}n numbers of  edge-critical graphs are determined when $n$ is sufficiently large. In 1968, Simonovits \cite{Simonovits1968} proved the following theorem.
\begin{theorem}[Simonovits \cite{Simonovits1968}]\label{Chromatic Critical Edge}
Let $G$ be an edge-critical  graph with $\chi(G)=p+1\geq 3$. Then there exists an $n_0$ such that if $n>n_0$ then $T_p(n)$ is the unique extremal graph for $G$ on $n$ vertices.
\end{theorem}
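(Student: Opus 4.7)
The plan is to combine the Erd\H{o}s--Stone--Simonovits asymptotic with a structural analysis of an extremal graph, in which the critical edge of $G$ is the decisive lever. Since $\chi(T_p(n))=p<\chi(G)$, the Tur\'an graph $T_p(n)$ is $G$-free, so $\mbox{ex}(n,G)\geq t_p(n)$. Theorem~\ref{erdos-stone} gives $\mbox{ex}(n,G)=(1+o(1))t_p(n)$, and the standard stability consequence states that any extremal $G^*\in\mbox{EX}(n,G)$ admits a partition $V(G^*)=V_1\cup\cdots\cup V_p$ with $|V_i|=n/p+o(n)$ and only $o(n^2)$ edges inside parts. Fix such a partition minimizing the number $b$ of inside-edges.

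Next I extract the colouring data from the critical edge. Let $e=xy$ be a critical edge of $G$; then $G-e$ admits a proper $p$-colouring $\phi$, and since $\chi(G)=p+1$ we must have $\phi(x)=\phi(y)$, for otherwise $\phi$ would properly $p$-colour $G$ itself. Call this shared colour class $1$.

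The cleanup step proves $b=0$. Call $v\in V_i$ \emph{good} if it has at most $\epsilon n$ non-neighbours outside $V_i$; a degree/edge count, together with an induction on $n$ (using $t_p(n)-t_p(n-1)=\lceil(1-1/p)(n-1)\rceil$ to delete any vertex of too-low degree), forces \emph{every} vertex of $G^*$ to be good once $n$ is large enough. Suppose there is an inside-edge $uv\subseteq V_i$. Map $x\mapsto u$, $y\mapsto v$, and embed the remaining $|V(G)|-2$ vertices of $G-e$ greedily: a colour-$1$ vertex goes into a good vertex of $V_i\setminus\{u,v\}$, and a colour-$j$ vertex ($j>1$) into a good vertex of $V_j$. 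Each placement has a constant number of already-placed neighbours whose images it must be adjacent to; each such constraint excludes at most $\epsilon n$ candidates, so with $\epsilon\ll 1/|V(G)|$ a distinct image always exists. This produces a copy of $G$ in $G^*$, a contradiction. Hence $b=0$, $G^*$ is $p$-partite with $e(G^*)\leq t_p(n)$, and equality forces $G^*=T_p(n)$.

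The main obstacle is the cleanup step: making it work uniformly over all extremal graphs requires (i) the induction on $n$ to actually push the number of bad vertices down to zero, not merely to $o(n)$, and (ii) the greedy embedding to succeed \emph{simultaneously} for all colour classes of $\phi$ rather than class by class. Both reduce to a careful calibration of the hierarchy $\eta\ll\epsilon\ll 1/|V(G)|$ together with exact use of $t_p(n)-t_p(n-1)$ at the induction step; once those parameters are set, the rest of the argument is routine bookkeeping.
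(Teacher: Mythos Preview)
The paper does not contain a proof of this theorem; it is quoted from Simonovits (1968) as background and used without proof. So there is no ``paper's own proof'' to compare your sketch against.

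Your outline is the standard stability proof and is essentially correct, but the step ``every vertex is good'' has a gap as stated. The induction on $n$ (deleting a vertex of degree below $t_p(n)-t_p(n-1)$) gives high minimum degree, which bounds the \emph{total} number of non-neighbours of each vertex by roughly $n/p$; it does not by itself bound the number of non-neighbours \emph{outside} the vertex's own part. A vertex $v\in V_i$ with many inside-neighbours can have degree well above $(1-1/p)n$ and still fail to be good, so the induction cannot remove it. The usual repair is not to ``push the induction harder'' but to run a second embedding argument: from the inside-edge bound $o(n^2)$ only $o(n)$ vertices $v$ satisfy $d_{V_i}(v)>\epsilon n$; by minimality of the partition any such $v$ has $d_{V_j}(v)\geq d_{V_i}(v)>\epsilon n$ for \emph{every} $j$, and some inside-neighbour $w$ of $v$ is good. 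Then $v$ and $w$ have $\Omega(n)$ common neighbours in each $V_j$, which is already enough for your greedy embedding of $G$ with $x\mapsto v$, $y\mapsto w$, even though $v$ is not good. This contradiction shows there are no bad vertices, after which the rest of your argument (no inside-edges, hence $G^*$ is complete $p$-partite, hence $G^*=T_p(n)$) goes through unchanged.
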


Although the Tur\'{a}n numbers of non-bipartite graphs are asymptotically  determined by Erd\H{o}s-Stone-Simonovits theorem, it is a challenge to determine the exact Tur\'{a}n functions for many non-bipartite graphs,  There are only few graphs whose Tur\'{a}n numbers were determined exactly, including edge-critical graphs \cite{Simonovits1968} and some specific graphs \cite{chen2003,Simonovits1974.1,Yuan2018}.

Given a graph $H$ and an integer $p\geq2$, the {\it edge blow-up} of $H$, denoted by $H^{p+1}$, is the graph obtained from replacing each edge in $H$ by a clique of order $p+1$ where the new vertices of the cliques are all distinct.
The subscript in the case of graphs indicates the number of vertices, e.g., denote by $P_{k}$ a path on $k$ vertices, $S_{k}$ a star on $k$ vertices and $K_{n_1,\ldots,n_p}$ the complete $p$-partite graph with partite sizes $n_1,\ldots,n_p$.
A {\it matching} in $G$ is a set of edges from $E(G)$, no two of which share a common vertex, and the matching number of $G$, denoted by $\nu(G)$, is the  number of edges in a maximum matching.
Accordingly, we denote by $M_{2k}$ the disjoint union of $k$ disjoint copies of edges.

In 1959, Erd\H{o}s and Gallai \cite{erdHos1959maximal} characterized the extremal graphs for $M_{2k}$.
Later, Erd\H{o}s \cite{erdHos1962} studied the extremal graphs for $M^{3}_{2k}$ and Moon \cite{moon1968} determined the extremal graphs for $M^{p+1}_{2k}$ for infinitely many values of $n$ when $p\geq 3$.
After almost forty years, Erd\H{o}s, F\"{u}redi, Gould, and Gunderson \cite{erdHos1995} determined the Tur\'{a}n number of $S^{3}_{k+1}$ and Chen, Gould, Pfender, and Wei \cite{chen2003} determined the Tur\'{a}n number of $S^{p+1}_{k+1}$ for general $p\geq 3$.
Glebov \cite{glebov2011} determined the extremal graphs for edge blow-up of paths.
Later, Liu \cite{Liu2013} generalized Glebov's result to edge blow-up of paths, cycles and a class of trees.
Very recently, Wang, Hou, Liu, and Ma \cite{Wang2020} determined the Tur\'{a}n numbers for edge blow-up of a large family of trees.
For other extremal results concerning edge blow-up of specific graphs, we refer the interested readers to \cite{hou2016,ni2020,zhu2020}.
We will characterize the extremal graphs for edge blow-up of non-partite graphs and estimate  the Tur\'{a}n numbers of edge blow-up of bipartite graphs.
Our main theorems need some definitions, so we state them in Section~\ref{section:main theorems}.
As applications of our main theorems, see Sections~\ref{section:corollaries} and~\ref{section-proof-1.3}, we determine the Tur\'{a}n numbers of edge blow-up of complete bipartite graphs and complete graphs.

\begin{theorem}\label{coro6}
Let $K_t$ be the complete graph on $t$ vertices. For $p\geq t+1$ and sufficiently large $n$, we have
$$\emph{ex}(n,K_{t}^{p+1})={t-1 \choose 2}\left (n-{t-1 \choose 2}\right)+t_p\left(n-{t-1 \choose 2}\right).$$
Moreover, the extremal graphs  are characterized.
\end{theorem}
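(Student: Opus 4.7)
The plan is to establish matching lower and upper bounds together with the characterization of the extremal graph.

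\textbf{Construction and lower bound.} I will use the graph $G_0$ on $V=A\cup B$ with $|A|=a:={t-1\choose 2}$, where $A$ is independent and completely joined to a balanced Tur\'{a}n graph $T_p$ on $B$ of order $n-a$. A direct count gives $e(G_0)=a(n-a)+t_p(n-a)$, matching the claimed value. To see $G_0$ is $K_t^{p+1}$-free, note that because $A$ is independent and $\omega(T_p)=p$, every $(p+1)$-clique of $G_0$ must contain exactly one vertex of $A$ and one vertex from each of the $p$ classes of $T_p$. If $K_t^{p+1}\subseteq G_0$, its ${t\choose 2}$ blown-up $K_{p+1}$'s supply ${t\choose 2}$ distinguished $A$-vertices. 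Since $A$ is independent, at most one of the $t$ core vertices of $K_t^{p+1}$ can lie in $A$. If none does, all ${t\choose 2}$ designated $A$-vertices are new blow-up vertices and hence pairwise distinct, forcing $|A|\ge{t\choose 2}$. If exactly one core vertex $c_1$ lies in $A$, it plays the $A$-role for the $t-1$ cliques through $c_1$, while the remaining ${t-1\choose 2}$ cliques still each require a distinct new $A$-vertex, so $|A|\ge 1+{t-1\choose 2}$. Either way we contradict $|A|={t-1\choose 2}$.

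\textbf{Upper bound.} For the matching upper bound I would run a stability-plus-cleanup scheme. Let $G$ be extremal $K_t^{p+1}$-free on $n$ vertices. Since $\chi(K_t^{p+1})=p+1$, Theorem~\ref{erdos-stone} combined with a standard stability argument yields a near-Tur\'{a}n partition $V(G)=V_1\cup\cdots\cup V_p$ with only $o(n^2)$ bad edges. A greedy peeling then extracts a set $A'\subseteq V(G)$ of vertices whose neighbourhood structure prevents them from being absorbed into any Tur\'{a}n class, leaving $G-A'$ essentially equal to $T_p(n-|A'|)$. This gives $e(G)\le |A'|(n-|A'|)+t_p(n-|A'|)$, and showing $|A'|\le{t-1\choose 2}$ together with monotonicity of the right-hand side over the range $0\le|A'|\le{t-1\choose 2}$ yields the target upper bound; an equality analysis at the endpoint then identifies $G_0$ as the unique extremum.

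\textbf{Main obstacle.} The delicate step is to prove $|A'|\le{t-1\choose 2}$. If $|A'|\ge{t-1\choose 2}+1$ I would construct a copy of $K_t^{p+1}$ inside $G$ as follows. The residue $G-A'$, being close to $T_p$ with $p\ge t+1$, contains a $K_t$, which I fix as the core; for each of the ${t\choose 2}$ core edges I then pick a distinct $A'$-vertex adjacent to both endpoints and embed $p-2$ further new vertices inside the Tur\'{a}n residue. The existence of such a system of distinct representatives follows from a Hall-type argument using the high degree of each $A'$-vertex into the Tur\'{a}n structure, while the condition $p\ge t+1$ leaves enough room inside $T_p$ for the $p-2$ fresh blow-up vertices per core edge. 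Making this embedding precise, and handling the borderline case $|A'|={t-1\choose 2}$ tightly enough to force $G=G_0$, is the technical heart of the proof; everything else reduces to careful counting.
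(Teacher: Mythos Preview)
Your lower bound construction and its analysis are correct, and in fact that analysis already encodes the key idea your upper bound is missing.

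The genuine gap is in the embedding step ``if $|A'|\ge\binom{t-1}{2}+1$ then $K_t^{p+1}\subseteq G$''. You place all $t$ core vertices in the Tur\'an residue and then need a \emph{distinct} $A'$-vertex for each of the $\binom t2$ core edges. But $\binom t2 = \binom{t-1}{2}+(t-1) > \binom{t-1}{2}+1$ for every $t\ge 3$, so your embedding only shows $|A'|\le\binom t2-1$, which does not meet the lower bound. The correct embedding --- and this is exactly the tight case of your own lower bound count --- puts \emph{one} core vertex $c_1$ into $A'$ and $c_2,\dots,c_t$ into $t-1\le p-1$ distinct Tur\'an classes. Then every clique through $c_1$ can be completed using only Tur\'an vertices (one per each of the $p-1$ classes not containing the other endpoint), while each of the $\binom{t-1}{2}$ cliques on $\{c_2,\dots,c_t\}$ still needs a single fresh $A'$-vertex. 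Total $A'$-usage: $1+\binom{t-1}{2}$, which is the right threshold. In the paper this is packaged abstractly: the decomposition family $\mathcal M(K_t^{p+1})$ contains the bipartite split $S_{t-1,t-1}\cup\binom{t-2}{2}\cdot K_2$, whose independent covering number is $q=\binom{t-1}{2}+1$, and Theorem~\ref{coro6} is a one-line consequence of the general Theorem~\ref{non-bipartite graph}.

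There is a second missing ingredient for uniqueness: you must show that in the extremal graph the set $A'$ (of size $\binom{t-1}{2}$) is \emph{independent}. If $G[A']$ contained an edge $xy$ you could put two core vertices at $x,y$, send $c_3,\dots,c_t$ to Tur\'an classes, and then only $2+\binom{t-2}{2}\le\binom{t-1}{2}$ vertices of $A'$ are needed (for $t\ge4$; for $t=3$ the set has one vertex and the issue is vacuous). This is the content of the paper's computation $\mathcal B=\{K_2\}$, and without it your characterization of the extremal graph is incomplete.
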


\begin{theorem}\label{coro5}
Let $K_{s,t}$ be the complete bipartite graph with $s\leq t$. For $p\geq 3$ and sufficiently large $n$, we have
$$\emph{ex}(n,K_{s,t}^{p+1})={s-1 \choose 2}+(s-1)(n-s+1)+t_p(n-s+1)+p(s,t),$$
where $p(s,t)$ is a constant depending on $s$ and $t$.
\end{theorem}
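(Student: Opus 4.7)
The plan is to derive this result as a specialization of the paper's main theorem for edge blow-ups of bipartite graphs (stated in Section~\ref{section:main theorems}) applied to $H=K_{s,t}$, via a matching pair of a construction and an upper bound.

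For the lower bound, I would construct $G_0 := K_{s-1}\vee G'$, where $K_{s-1}$ is a clique of $s-1$ \emph{apex} vertices joined to the remaining $n-s+1$ vertices which carry a Tur\'{a}n graph $T_p(n-s+1)$ augmented by an auxiliary edge set $F$ of size $p(s,t)$ placed inside a single Tur\'{a}n class. Counting edges yields exactly the claimed number. To see $G_0$ is $K_{s,t}^{p+1}$-free, note that any embedding produces two bipartition classes $A$ (size $s$) and $B$ (size $t$), both independent; since the apex $K_{s-1}$ is a clique and the Tur\'{a}n classes are the only nontrivial independent sets of $G_0$, we must have $A\subseteq V_i$ and $B\subseteq V_j$ for distinct classes $V_i,V_j$. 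The $st$ private copies of $K_{p-1}$ witnessing the blow-up must then be vertex-disjoint inside the common neighborhood of each pair $(a,b)\in A\times B$, which is the join of $K_{s-1}$ and a complete $(p-2)$-partite graph. Since the complete $(p-2)$-partite part has clique number only $p-2$, every such $K_{p-1}$ must contain at least one apex vertex, so we would need $s-1\geq st$, a contradiction. A careful choice of $F$, analyzed so that the extra edges inside a single Tur\'{a}n class do not open up new embedding possibilities, realizes the maximal additive constant $p(s,t)$.

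For the matching upper bound, I would invoke the paper's main bipartite theorem to obtain an upper bound of the same shape $\binom{s-1}{2}+(s-1)(n-s+1)+t_p(n-s+1)+O(1)$, together with a stability statement that forces any near-extremal $K_{s,t}^{p+1}$-free graph of order $n$ (for $n$ large) to look structurally like $G_0$, i.e.\ to admit a partition into an apex-like set of size essentially $s-1$ joined to an almost-$T_p$ block. Starting from this near-canonical form, a local extremal analysis inside the Tur\'{a}n-like part then caps the number of additional edges by exactly $p(s,t)$, closing the gap between lower and upper bound.

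The main technical obstacle will be the exact evaluation of $p(s,t)$, and matching it on both sides. This reduces to a finite extremal problem: given the $K_{s-1}$ apex, determine the maximum number of edges that can be added to $T_p(n-s+1)$ while still forbidding $K_{s,t}^{p+1}$. The combinatorics here is delicate because one must control \emph{all} ways in which a few extra edges, together with the apex clique and the Tur\'{a}n partition, could assemble the $st$ disjoint private $K_{p-1}$'s required for an embedding; identifying the extremal configuration (and showing it is attained by the edge set $F$ used in the construction) is the crux of the proof.
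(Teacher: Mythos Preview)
Your outline has the right large-scale shape (apex of size $s-1$, Tur\'{a}n body, a bounded surplus inside one class), but it contains a genuine gap: the constant $p(s,t)$ is never defined, so both halves of your argument are circular. You construct $G_0$ by placing an edge set $F$ of size $p(s,t)$ inside one class and then say a ``careful choice of $F$'' realizes the maximum; but which $F$, and why that particular size, is exactly what the theorem is asserting. Likewise, your upper bound ends with ``a local extremal analysis \ldots\ caps the number of additional edges by exactly $p(s,t)$'' without saying what problem that analysis solves.

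The paper closes this gap by a reduction you do not mention. After establishing (via progressive induction, as in the proof of Theorem~\ref{maintheorem1}) the apex-plus-almost-Tur\'{a}n structure, two further ingredients are needed. First, one must force all surplus edges into a \emph{single} Tur\'{a}n class: this is done through the refined inequality~(\ref{equation for K_s,t}) of Lemma~\ref{lemma for F(k,p)}, which gives a strictly smaller bound ($t^2-2t$ rather than $f(t-1,t-1)$) whenever two classes carry edges, and this is beaten by the explicit constructions $F_1,F_2$ built from Lemma~\ref{lemma for Kst even t} and Proposition~\ref{extremal-s}. Second, once reduced to one class, the problem is exactly the Tur\'{a}n problem for the decomposition family: one \emph{defines} $p(s,t):=\mbox{ex}(m,\mathcal{M}(K_{s,t}^{p+1}))-h'(m,1,s)$ and proves, by a short structural argument, that this quantity is independent of $m$ for $m$ large. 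The matching lower bound then follows immediately by embedding an $\mathcal{M}(K_{s,t}^{p+1})$-extremal graph into one class of a balanced $p$-partite graph (Proposition~\ref{L-critical extremal 2}), not by an ad-hoc analysis of embeddings into $G_0$.

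A smaller point: your embedding analysis for the lower bound is incomplete even before $F$ enters. You assert that every private $K_{p-1}$ must use an apex vertex, but once $F$ is nonempty a vertex $a\in A\subseteq V_i$ may have $F$-neighbours inside $V_i$, and such a neighbour together with $p-2$ vertices from the other classes already gives a $K_{p-1}$ avoiding the apex entirely. So the ``$s-1\ge st$'' contradiction disappears, and what actually constrains $F$ is precisely membership of $F\cup(\text{apex edges})$ in the decomposition family $\mathcal{M}(K_{s,t}^{p+1})$---which is again the reduction above.
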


For a given graph $H$, let $g(n,H)$ denote the maximum number of edges not contained in any monochromatic copy of $H$ in a $2$-edge-coloring of $K_n$. If we color the edges of an extremal $n$-vertex graph for $H$ red and color the other edges blue, then we can see that $g(n,H)\geq \mbox{ex}(n,H)$ for any $H$ and $n$. In 2004, Keevash and Sudakov showed in  \cite{keevash2004} that this lower bound is tight for sufficiently large $n$ if $H$ is edge-critical or a cycle of length four. Hence, they posed the following conjecture.

\begin{conjecture}[Keevash and Sudakov \cite{keevash2004}]\label{Keevash-Sudakov-conjecture} Let $H$ be a given graph. If $n$ is sufficiently large, then
$$g(n,H)=\emph{ex}(n,H).$$
\end{conjecture}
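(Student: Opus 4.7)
The lower bound $g(n,H) \ge \mbox{ex}(n,H)$ is standard: given an extremal $H$-free graph $G$ on $n$ vertices, color its edges red and all remaining edges of $K_n$ blue; no red edge then lies in a monochromatic copy of $H$, since $G$ itself contains none. So the real content is the matching upper bound $g(n,H) \le \mbox{ex}(n,H)$.

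Let $c$ be any $2$-edge-coloring of $K_n$ and write $F_R$, $F_B$ for the red and blue edges not contained in any monochromatic copy of $H$, respectively, and put $F_c = F_R \cup F_B$. The driver of the argument is the observation that both $F_R$ and $F_B$ are $H$-free: if $F_R$ contained a copy of $H$, every edge of that copy would be red, hence contained in a monochromatic copy of $H$, contradicting $F_R \subseteq F_c$. This already yields $|F_c| \le 2\,\mbox{ex}(n,H)$, which is a factor of two too large.

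To sharpen this, I would apply the Erd\H{o}s--Simonovits stability method in the spirit of Keevash and Sudakov \cite{keevash2004}. Assume for concreteness that $H$ is edge-critical with $\chi(H) = p+1 \ge 3$, so that by Theorem~\ref{Chromatic Critical Edge} the unique extremal graph is $T_p(n)$. Stability then forces any $H$-free graph with at least $\mbox{ex}(n,H) - o(n^2)$ edges to differ from $T_p(n)$ in at most $o(n^2)$ edges. I would split into cases: (i) if $\min\{|F_R|,|F_B|\} \le \mbox{ex}(n,H) - \Omega(n^2)$, then a direct supersaturation argument applied inside the larger color class absorbs the deficit and gives $|F_c| \le \mbox{ex}(n,H)$; (ii) otherwise each of $F_R$, $F_B$ is an $o(n^2)$-perturbation of some $T_p(n)$, and I would use the edge-disjointness $F_R \cap F_B = \emptyset$ together with a rigidity step to argue that the two underlying $p$-partitions essentially coincide, forcing $F_R \cup F_B$ to sit inside a single $T_p(n)$ up to $o(n^2)$ exceptional edges. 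A final Simonovits-type symmetrization/replacement step would then remove these exceptional edges and yield $|F_c| \le t_p(n) + O(1) \le \mbox{ex}(n,H)$.

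The step I expect to be hardest is the rigidity claim in case (ii): two edge-disjoint near-extremal Tur\'{a}n-like graphs on $[n]$ can a priori be built on rather different $p$-partitions, and ruling this out requires exploiting the joint $H$-freeness of $F_R \cup F_B$ beyond the single-color stability input. A more serious obstacle is that the whole outline rests on $H$ being edge-critical, and going beyond this class --- to non-edge-critical non-bipartite $H$, where the family of extremal graphs is no longer rigid --- is exactly where the plan should genuinely break down, consistent with the paper's announcement that it constructs counter-examples in precisely this wider setting.
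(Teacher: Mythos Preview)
The statement you are attempting to prove is Conjecture~\ref{Keevash-Sudakov-conjecture}, and the paper does \emph{not} prove it: on the contrary, Theorem~\ref{conter-example} exhibits an infinite family of graphs $H=G^{p+1}$ (with $G$ non-bipartite, or bipartite with $q<|A|$, and $\mathcal{B}\neq\{K_q\}$) for which $g(n,H)=\mbox{ex}(n,H)+{q-1\choose 2}-\mbox{ex}(q-1,\mathcal{B})>\mbox{ex}(n,H)$. So there is no ``paper's own proof'' to compare against, and any purported proof of the full conjecture is necessarily wrong.

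Your sketch restricts to edge-critical $H$, a case already handled by Keevash and Sudakov themselves, and you correctly flag that the rigidity step in case (ii) is where the argument should collapse for general $H$. The paper's counter-examples make this precise: when $H=G^{p+1}$ with $\mathcal{B}\neq\{K_q\}$, the extremal $H$-free graphs are not $T_p(n)$ but graphs in $\mathcal{H}(n,p,q,0,0,\mathcal{B})$, which carry a small ``hub'' set $E$ of size $q-1$ whose internal edges are constrained to avoid $\mathcal{B}$. In a $2$-coloring one can color $H'(n,p,q)$ blue and everything else red; the ${q-1\choose 2}$ red edges inside $E$ are then NIM-$H$ because $q-1<|V(H)|$, yet they exceed what any single $H$-free graph can accommodate there. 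Your stability-plus-symmetrization outline has no mechanism to detect this: the two color classes $F_R$ and $F_B$ are each $H$-free and close to extremal, but their union genuinely has more edges than $\mbox{ex}(n,H)$, so the ``final Simonovits-type replacement step'' you invoke simply cannot succeed. The gap is not a missing technical lemma but the falsity of the target statement.
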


Ma \cite{Ma-2017} and Liu-Pikhurko-Sharifzadeh \cite{Liu-Pikhurko-Sharifzadeh-2019} confirmed Conjecture~\ref{Keevash-Sudakov-conjecture} for a large family of bipartite graphs. Our method also works for this problem. In Section 6, we will show that Conjecture~\ref{Keevash-Sudakov-conjecture} does not hold for a large family of non-bipartite graphs. In particular, we prove the following theorem.

\begin{theorem}
Let $n$ be sufficiently large. Then
$$g(n,K_{t}^{p+1})=\emph{ex}(n,K_{t}^{p+1})+{{t-1\choose 2} \choose 2}.$$
\end{theorem}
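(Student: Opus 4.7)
My plan tackles the lower and upper bounds separately, with the upper bound carrying the technical weight.

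For the \textbf{lower bound}, I would use the extremal construction guaranteed by Theorem~\ref{coro6}: an extremal $K_t^{p+1}$-free graph $G_0$ contains a distinguished set $U\subseteq V(G_0)$ of size $\binom{t-1}{2}$ that is independent in $G_0$ and completely joined to a Turán graph $T_p(n-|U|)$ on the remaining vertices with parts $V_1,\ldots,V_p$. Color $E(G_0)$ red and $E(K_n)\setminus E(G_0)$ blue. The red graph is $G_0$ and so contains no red $K_t^{p+1}$; the blue graph is the vertex-disjoint union of $K_{|U|}$ and of the $p$ cliques on the $V_i$. A blue copy of $K_t^{p+1}$, being connected, must lie in one such clique, but $|U|=\binom{t-1}{2}<t+\binom{t}{2}(p-1)=|V(K_t^{p+1})|$ (using $p\ge t+1\ge 3$), so the $\binom{|U|}{2}$ blue edges inside $U$ lie in no monochromatic $K_t^{p+1}$. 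Together with the $\text{ex}(n,K_t^{p+1})$ red edges, this yields the claimed lower bound.

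For the \textbf{upper bound}, fix a 2-edge-coloring of $K_n$ and let $R^\ast,B^\ast$ be the red and blue edges in no monochromatic $K_t^{p+1}$. A copy of $K_t^{p+1}$ inside $R^\ast$ would itself be a red $K_t^{p+1}$ containing each of its own edges, contradicting the definition of $R^\ast$; hence $R^\ast$, and likewise $B^\ast$, is $K_t^{p+1}$-free, so Theorem~\ref{coro6} gives $|R^\ast|,|B^\ast|\le\text{ex}(n,K_t^{p+1})$. Suppose for contradiction that $|R^\ast|+|B^\ast|\ge\text{ex}(n,K_t^{p+1})+\binom{\binom{t-1}{2}}{2}+1$ and set $\Delta=\text{ex}(n,K_t^{p+1})-|R^\ast|\ge 0$, so $|B^\ast|\ge\binom{\binom{t-1}{2}}{2}+1+\Delta$. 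I would now invoke a stability version of Theorem~\ref{coro6}, extractable from the proof of that theorem, asserting that any $K_t^{p+1}$-free graph within $\Delta$ of the extremum has the extremal shape on all but $O(\Delta)$ edges. Applied to $R^\ast$ this pins down a set $U$ of $\binom{t-1}{2}$ vertices and a Turán partition $V_1\cup\cdots\cup V_p$ of the remaining vertices such that $R^\ast$ agrees with the extremal configuration except for an $O(\Delta)$-sized correction.

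The complementary blue graph $B=K_n\setminus R$ is then squeezed into $\binom{U}{2}\cup\bigcup_i\binom{V_i}{2}$ up to a comparable correction, since removing edges of $R$ from $K_n\setminus R^\ast$ can only split components. For each Turán part $V_i$, since $|V_i|\ge t+\binom{t}{2}(p-1)$ for large $n$, the blue subgraph on $V_i$ is nearly a complete clique, so every blue edge in $V_i$ extends to a blue $K_t^{p+1}$ by choosing $t-2$ further core vertices and $(p-1)$ fillers for each remaining core edge inside $V_i$; therefore no blue edge inside any $V_i$ belongs to $B^\ast$. This leaves $B^\ast\subseteq\binom{U}{2}$ up to a small error bounded by $O(\Delta)$. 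The crux, and where I expect the main work, is a local exchange argument showing that every unit of the $\Delta$-sized deficit in $R^\ast$ is paid for by at least one lost potential blue good edge, for example by creating a blue $K_t^{p+1}$ covering a would-be good blue $U$-edge through a bridge into some $V_i$; the resulting strict trade-off contradicts $|B^\ast|\ge\binom{\binom{t-1}{2}}{2}+1+\Delta$ and closes the upper bound. This balancing step is the delicate point and is precisely where the hypothesis that $n$ is sufficiently large is used.
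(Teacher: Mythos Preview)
Your lower bound is correct and coincides with the paper's construction (modulo swapping the colours).

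The upper bound, however, has a genuine gap. The key difficulty is that stability applied to $R^\ast$ tells you about the structure of the NIM red edges, but it tells you nothing about the actual red/blue colouring of $K_n$ on the remaining edges. Concretely, your claim that ``the blue subgraph on $V_i$ is nearly a complete clique'' does not follow: knowing that $R^\ast$ has almost no edges inside $V_i$ only says that the red edges inside $V_i$ are (with few exceptions) each contained in some red copy of $K_t^{p+1}$; it does not say that such red edges are rare. The red graph $R$ inside $V_i$ could be dense, making the blue graph on $V_i$ sparse, and then a blue edge in $V_i$ need not extend to a blue $K_t^{p+1}$ at all. Moreover, you have no a priori control on the size of $\Delta$: nothing prevents $\Delta$ from being of order $n^2$ (take $|R^\ast|$ and $|B^\ast|$ each about $\tfrac12\mathrm{ex}(n,K_t^{p+1})$), in which case an ``$O(\Delta)$-error'' stability statement is vacuous. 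Finally, the ``local exchange'' you defer to at the end is not a residual technicality --- it is the entire content of the inequality $|R^\ast|+|B^\ast|\le \mathrm{ex}(n,K_t^{p+1})+\binom{\binom{t-1}{2}}{2}$, and no mechanism is proposed for carrying it out.

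The paper circumvents all of this by working with the full NIM graph $F_n=R^\ast\cup B^\ast$ rather than with one colour at a time. The crucial extra ingredient is a bichromatic Ramsey-type lemma (Lemma~\ref{Lemma-for-monochromatic-T(n,p)}): since $e(F_n)\ge t_p(n)$, the NIM graph $F_n$ contains a (say) blue copy of $T_p(t_0p)$ such that \emph{every} edge of $K_n$ inside each class is red. This pins down the colouring itself, not just $R^\ast$, inside a large Tur\'an substructure. From there one reruns the progressive-induction argument of Theorem~\ref{maintheorem1} on $F_n$ directly, obtaining a partition $(\bigcup_i (B_i\cup C_i))\cup E$ with $|E|=q-1=\binom{t-1}{2}$ and no NIM edge inside any $B_i\cup C_i$ (here $k=1$ by Proposition~\ref{3.9} since $K_t$ is non-bipartite). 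This immediately gives $e(F_n)\le h(n,p,q)=\mathrm{ex}(n,K_t^{p+1})+\binom{\binom{t-1}{2}}{2}$, with no balancing or exchange argument needed.
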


The organisation of this paper is as follows.
In Section~\ref{section:main theorems}, we introduce some definitions and state our main theorems.
In Section~\ref{section:corollaries}, we present several corollaries of our main theorems.
In Section~\ref{section:lemmas}, we present several lemmas.
In Section~\ref{section5}, we will prove Theorems~\ref{maintheorem1} and~\ref{non-bipartite graph}.
In Section~\ref{section-proof-1.3}, we deduce some results about graphs without containing a matching with given sizes.
In Section~\ref{conclusion}, we will discuss more applications of our method.

\section{Main theorems}\label{section:main theorems}

Let $K_t$ be a complete  graph on $t$ vertices and $\overline{K}_t$ be the complement of $K_t$. Denote by $G\cup H$ the vertex-disjoint union of $G$ and $H$ and by $k\cdot G$ the vertex-disjoint union of $k$ copies of  $G$. Denote by $G+H$ the graph obtained from $G\cup H$ by adding edges between each vertex of $G$ and each vertex of $H$.

In order to study the Tur\'{a}n numbers of non-bipartite graphs, Simonovits \cite{Simonovits1974} defined the decomposition family $\mathcal{M}(\mathcal{F})$ of a family of graphs $\mathcal{F}$.

\begin{definition}[Simonovits \cite{Simonovits1974}]
\emph{Given a family of graphs $\mathcal{F}$ with $p(\mathcal{F})=p\geq 2$, let $\mathcal{M}(\mathcal{F})$ be the family of minimal\footnote{If $M\in \mathcal{M}(\mathcal{F})$, then $M^\prime\notin \mathcal{M}(\mathcal{F}$) where $M^\prime$ is a proper subgraph of $M$.} graphs $M$ satisfying the following: there exist an $F\in \mathcal{F}$ and a constant $t$ depending on $F$ such that $F\subseteq (M\cup\overline{K}_{t})+ T_{p-1}((p-1)t)$. We call $\mathcal{M}(\mathcal{F})$ the {\it decomposition family} of $\mathcal{F}$.}
\end{definition}

Thus, a graph $M$ is in $\mathcal{M}(\mathcal{F})$ if the graph obtained from putting a copy of $M$ (but not any of its proper subgraphs) into a class of a large $T_p(n)$ contains some $F\in \mathcal{F}$.
If $F\in\mathcal{ F}$ with chromatic number $p+1$, then $F\subseteq T_{p+1}((p+1)s)$ for some $s\geq 1$.
Therefore the decomposition family $\mathcal{M}(\mathcal{F})$ always contains some bipartite graphs\footnote{It is possible that the decomposition family contains non-bipartite graphs.}.
A deep theorem of Simonovits \cite{Simonovits1974} shows that if the decomposition family $\mathcal{M}(\mathcal{F})$  contains a linear forest\footnote{A linear forest is a graph consisting of paths.}, then the extremal graphs for $\mathcal{F}$ have very simple and symmetric structure (the theorem is quite complicated, we refer the interested readers to \cite{Simonovits1974} for more information).
Our theorems focus on graphs $F$ such that $\mathcal{M}(F)$ contains a matching (a matching is a linear forest).
Hence, our theorems are refinements of Simonovits' theorem in a certain sense. The main purpose of this paper is to determine the exact Tur\'{a}n numbers for new families of graphs.

To state our main theorems and related results, we need the following result.
Let $\Delta(G)$ be the maximum  degree of $G$. Define $f(\nu,\Delta)=\max\{e(G) :\nu(G)\leq \nu,\Delta(G)\leq \Delta\}$. In 1972, Abbott, Hanson, and Sauer \cite{abbott1972} determined $f(k-1,k-1)$. Later Chv\'{a}tal and Hanson \cite{chvatal1976} proved the following theorem.
\begin{theorem}[Chv\'{a}tal and Hanson \cite{chvatal1976}]\label{chvatal}
For every $\nu\geq 1$ and $\Delta\geq 1$,
\[f(\nu,\Delta)=\nu\Delta+\left\lfloor\frac{\Delta}{2}\right\rfloor\left\lfloor\frac{\nu}{\lceil\Delta/2\rceil}\right\rfloor\leq \nu\Delta+\nu.\]
\end{theorem}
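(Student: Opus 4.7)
The plan is to prove the formula $f(\nu,\Delta) = \nu\Delta+\lfloor\Delta/2\rfloor\lfloor\nu/\lceil\Delta/2\rceil\rfloor$ in both directions, with the trailing inequality $\le\nu\Delta+\nu$ following immediately from $\lfloor\Delta/2\rfloor/\lceil\Delta/2\rceil\le 1$. Throughout, set $s=\lceil\Delta/2\rceil$ and write $\nu=qs+r$ with $0\le r<s$.

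For the lower bound I would exhibit an explicit graph. When $\Delta=2d$ is even, take the disjoint union of $q$ copies of $K_{2d+1}$ and $r$ copies of the star $K_{1,2d}$, which has matching number $qd+r=\nu$, maximum degree $\Delta$, and $q\binom{2d+1}{2}+2dr=2d\nu+dq$ edges, matching the formula. When $\Delta=2d+1$ is odd, $K_{2d+2}$ is insufficient, so I would take the block $B$ obtained from $K_{2d+3}$ by deleting a ``cherry'' $\{v_1v_2,v_1v_3\}$ together with a perfect matching of $\{v_4,\dots,v_{2d+3}\}$ (total $d+2$ deletions). A short parity argument shows $d+2$ is the minimum number of edge deletions needed to bring $\Delta(K_{2d+3})$ down to $2d+1$, so $B$ has $\binom{2d+3}{2}-(d+2)=2d^2+4d+1$ edges, maximum degree $\Delta$, and matching number $d+1$ (forced by $|V(B)|=2d+3$). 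Taking $q$ copies of $B$ together with $r$ copies of $K_{1,2d+1}$ again delivers the formula value.

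For the upper bound, I would take $G$ with $\nu(G)\le\nu$, $\Delta(G)\le\Delta$ and fix a maximum matching $M$ with $W=V(G)\setminus V(M)$; since $M$ is maximum, $W$ is independent. A first inequality comes from summing degrees over $V(M)$: $2e(V(M))+e(V(M),W)\le 2\nu\Delta$. The key structural input is Berge's augmenting-path criterion: for each matched pair $\{u_i,v_i\}\in M$, if $A_i=N(u_i)\cap W$ and $B_i=N(v_i)\cap W$ contain distinct $w\in A_i$ and $w'\in B_i$, then $wu_iv_iw'$ is an augmenting path, a contradiction. Hence for each $i$ either one of $A_i,B_i$ is empty (``star-type'') or $A_i=B_i=\{w_i\}$ is a single vertex (``balanced-type''). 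The main obstacle is then turning this per-pair dichotomy into the sharp floor expression: combining the dichotomy with the degree constraint $\deg_G(w)\le\Delta$ for $w\in W$ yields only the weaker bound $\nu\Delta+\nu$. To recover the exact $\lfloor\Delta/2\rfloor\lfloor\nu/s\rfloor$ term, my plan would be induction on $\nu$ --- identifying in any extremal $G$ a ``saturated block'' (a near-$K_{\Delta+1}$ subgraph using exactly $s$ matching edges and contributing $\lfloor\Delta/2\rfloor$ to the surplus), excising it, and applying the inductive hypothesis to the reduced graph of matching number $\nu-s$. Verifying that such a block always exists in an extremal graph is the delicate point that drives the case analysis; a clean alternative would be to invoke the Gallai--Edmonds decomposition, using that each factor-critical component of $G[D]$ behaves like one of the blocks above.
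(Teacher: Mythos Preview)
The paper does not prove this theorem; it is quoted as a result of Chv\'{a}tal and Hanson and used as a black box, with a parenthetical remark that Balachandran and Khare \cite{Balachandran2009} later gave a structural proof based on Gallai's Lemma. So there is no proof in the paper to compare your proposal against.

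On the proposal itself: your lower-bound constructions are correct and are essentially the standard extremal examples. Your upper-bound argument, however, is only a plan. You correctly observe that the star-type/balanced-type dichotomy coming from forbidding length-$3$ augmenting paths yields only the weak bound $\nu\Delta+\nu$, and that the real content is in locating a ``saturated block'' of $s=\lceil\Delta/2\rceil$ matching edges to peel off. That step is genuinely nontrivial---it is not clear a priori that an extremal graph must contain such a block, and the induction does not close without it. Your fallback suggestion, the Gallai--Edmonds decomposition, is in fact the cleaner route and is exactly what Balachandran and Khare do: each factor-critical component on $2m+1$ vertices contributes $m$ to $\nu$ and at most $\lfloor(2m+1)\Delta/2\rfloor$ edges, and optimizing over component sizes recovers the floor term $\lfloor\Delta/2\rfloor\lfloor\nu/s\rfloor$ directly. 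If you want a self-contained proof, pursue that line rather than the block-excision induction.
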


In 2009, basing on Gallai's Lemma \cite{Gallai1963}, Balachandran and Khare \cite{Balachandran2009} gave a more `structural' proof of this result. Hence they gave a simple characterization of all the cases where the extremal graph is unique. Denote by $\mathcal{E}_{\nu,\Delta}$ the set of the extremal graphs in Theorem~\ref{chvatal}.

\begin{center}
\begin{tikzpicture}
\draw (1,0) ellipse (3 and 0.5);
\draw (1,3) ellipse (3 and 0.5);
\draw (1,5) ellipse (3 and 0.5);
\draw (5.5,2.5) ellipse (0.5 and 1.5);
\draw [line width=3pt](-1.5,0) -- (-1.4,3);
\draw [line width=3pt](-1.2,0) -- (-1.1,5);
\draw [line width=3pt](-0.9,3) -- (-0.85,5);
\filldraw[black] (1,1) circle (1pt);
\filldraw[black] (1,2) circle (1pt);
\filldraw[black] (1,1.5) circle (1pt);

\draw [line width=3pt](3,5) -- (5.2,3.5);
\draw [line width=3pt](3,3) -- (5.1,2.5);
\draw [line width=3pt](3,0) -- (5.2,1.5);
\draw node at (1.5,5){$E_{\nu,\Delta}$};
\draw node at (5.5,2.5){$Q_{s-1}$};
\draw node at (-3,2.5){$T_p(n-s+1)$};
\draw node at (1,-1){Figure 1: a graph of $\mathcal{H}(n,p,s,\nu,\Delta,\mathcal{B})$};
\end{tikzpicture}
\end{center}

Let $H(n,p,s)=K_{s-1}+T_p(n-s+1)$ and $H^\prime(n,p,s)=\overline{K}_{s-1}+T_p(n-s+1)$. Let $h(n,p,s)=e(H(n,p,s))$ and $h^\prime(n,p,s)=e(H^\prime(n,p,s))$. For a set of graphs $\mathcal{B}$, denote by $\mathcal{H}(n,p,s,\nu,\Delta,\mathcal{B})$ (Figure 1) the set of graphs which are obtained by taking an $H^\prime(n,p,s)$, putting a copy of $E_{\nu,\Delta}\in \mathcal{E}_{\nu,\Delta}$ in one class of $T_p(n-s+1)$ and putting a copy of $Q_{s-1}\in \mbox{EX}(s-1,\mathcal{B})$ in $\overline{K}_{s-1}$. As before, we use $\mathcal{H}(n,p,s,\nu,\Delta,B)$ to denote $\mathcal{H}(n,p,s,\nu,\Delta,\{B\})$ if $\mathcal{B}=\{B\}$.

A {\it covering} of a graph is a set of vertices which together meet all edges of the graph.
An {\it independent set} of a graph is a set of vertices no two of which are adjacent.
Similarly, an {\it independent covering} of a bipartite graph is an independent set which meets all edges of the bipartite graph. The minimum number of vertices in a
covering of a graph $F$ is called the {\it covering number} of $F$ and is denoted by $\beta(F)$.

Let $\mathcal{F}$ be a family of graphs containing at least one bipartite  graph. We need the following three parameters: $q(\mathcal{F})$, $\mathcal{S}(\mathcal{F})$, $\mathcal{B}(\mathcal{F})$, of $\mathcal{F}$ to describe our main theorems.

The {\it independent covering number} $q(\mathcal{F})$ of $\mathcal{F}$ is defined by
\[q(\mathcal{F})=\min\{q(F):F\in \mathcal{F} \mbox{ is bipartite}\},\]
where $q(F)$ is the minimum order of an  independent covering of $F$.

The {\it independent covering family} $\mathcal{S}(\mathcal{F})$ of $\mathcal{F}$ is the family of  independent coverings of  bipartite graphs $F\in \mathcal{F}$ with order $q(\mathcal{F})$.

The {\it subgraph covering family}  $\mathcal{B}(\mathcal{F})$ of $\mathcal{F}$ is the set of subgraphs (regardless of isolated vertices) of $F\in\mathcal{F}$ which are induced by a covering of $F$ with order at most $q(\mathcal{F})-1$ (if $\beta(F)\geq q(\mathcal{F})$ for each $F\in \mathcal{F}$, then we set $\mathcal{B}(\mathcal{F})=\{K_q\}$).

In the rest of this paper, given a graph $G$, let $G^{p+1}$ be the edge blow-up of $G$ with $p\geq \chi(G)+1$, $\mathcal{M}=\mathcal{M}(G^{p+1})$, $\mathcal{B}=\mathcal{B}(\mathcal{M})$ and $q=q(\mathcal{M})$. Let $k=\min\{d_{H_S}(x):x\in S, S\in \mathcal{S}(\mathcal{M})\}$, where $H_S\in \mathcal{M}$ contains $S$. For any connected bipartite graph $G$, let $A$ and $B$ be its two color classes with $|A|\leq |B|$. Moreover, if $G$ is disconnected, we always partition $G$ into $A\cup B$ such that $|A|$ is as small as possible. We will establish the following theorems.

\begin{theorem}\label{maintheorem1}
Let $G$ be a bipartite graph and $n$ be sufficiently large. For $p\geq 3$, we have the following:\\
(\romannumeral1). If  $q=|A|$, then
\begin{equation}\label{ineq in Main Theorem}
h^\prime(n,p,q)+\emph{ex}(q-1,\mathcal{B}) \leq \emph{ex}(n,G^{p+1})\leq h(n,p,q)+ f(k-1,k-1).
\end{equation}
Furthermore, both bounds are best possible.\\
(\romannumeral2). If $q<|A|$, then
$$\emph{ex}(n,G^{p+1})=h^\prime(n,p,q)+\emph{ex}(q-1,\mathcal{B}).$$
Moreover, the graphs in $\mathcal{H}(n,p,q,0,0,\mathcal{B})$ are the only extremal graphs for $G^{p+1}$.
\end{theorem}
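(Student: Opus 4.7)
The plan is to handle the two inequalities separately: a lower bound by construction, and an upper bound by Erd\H{o}s--Simonovits stability plus a structural refinement, with the equality in (ii) emerging from a tightening of the supersaturation step.

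For the lower bound in (i) and the construction realizing the extremum in (ii), I would take $H^\prime(n,p,q)=\overline{K}_{q-1}+T_p(n-q+1)$ and install an extremal $\mathcal{B}$-free graph $Q_{q-1}\in\mbox{EX}(q-1,\mathcal{B})$ on the $q-1$ vertices that formed $\overline{K}_{q-1}$; this produces exactly $h^\prime(n,p,q)+\mbox{ex}(q-1,\mathcal{B})$ edges. To show $G^{p+1}$-freeness, I argue by contradiction. Any embedded copy of $G^{p+1}$ must realize all of its $K_{p+1}$-subgraphs; since the adjacency between any two of the $p+1$ natural parts of the construction is complete, $p$ of the $p+1$ classes of a natural $(p+1)$-coloring of $G^{p+1}$ can be aligned with the parts of $T_p(n-q+1)$, and the remaining class is forced to sit inside the $(q-1)$-set carrying $Q_{q-1}$. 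The subgraph it induces realizes a covering of size at most $q-1$ of some $M\in\mathcal{M}$; by the very definition of $\mathcal{B}(\mathcal{M})$, this plants a member of $\mathcal{B}$ inside $Q_{q-1}$, a contradiction.

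For the upper bound, let $F$ be $G^{p+1}$-free with $e(F)\ge h^\prime(n,p,q)$. By the stability form of Erd\H{o}s--Stone--Simonovits, using $\chi(G^{p+1})=p+1$, there is a partition $V_1,\ldots,V_p$ of $V(F)$ with $\sum_i e(F[V_i])=o(n^2)$. A first supersaturation step shows that no $M\in\mathcal{M}$ can sit entirely inside a single $V_i$: otherwise the near-Tur\'an cross structure on the other $p-1$ parts supplies the $T_{p-1}((p-1)t)$ demanded by the definition of $\mathcal{M}$, completing a copy of $G^{p+1}$. Consequently each $F[V_i]$ has bounded matching number and maximum degree $k-1$, and Theorem~\ref{chvatal} bounds the total number of intra-part edges by $f(k-1,k-1)$. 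Next let $S$ be the set of ``bad'' vertices missing $\Omega(n)$ cross-edges; if $|S|\ge q$, an independent covering $S^\prime\in\mathcal{S}(\mathcal{M})$ of size $q$ of some bipartite $M\in\mathcal{M}$ embeds into $S$ (each bad vertex has many non-neighbors among the other parts, supplying partners for the non-$S^\prime$ side of $M$), producing a copy of $M$ inside a single part and thus a copy of $G^{p+1}$, contradiction. Hence $|S|\le q-1$, and by the same completion argument $F[S]$ must be $\mathcal{B}$-free, so $e(F[S])\le\mbox{ex}(q-1,\mathcal{B})$. Summing the cross-edges ($\le h(n,p,q)$, treating $S$ as joined to the rest), the intra-part edges, and the $S$-internal edges proves the upper bound in (i).

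For (ii), where $q<|A|$, a sharpened supersaturation step rules out every intra-part edge: such an edge combined with a bad set of size $\le q-1$ and the Tur\'an-like cross structure already embeds $G^{p+1}$, because the slack $|A|>q$ leaves a spare vertex of $A$ to absorb an endpoint of the intra-part edge. This collapses the upper bound to $h^\prime(n,p,q)+\mbox{ex}(q-1,\mathcal{B})$, matching the lower bound; running equality throughout then forces $|S|=q-1$, $F[S]\in\mbox{EX}(q-1,\mathcal{B})$, and every vertex outside $S$ complete both to $S$ and across all parts, yielding exactly $\mathcal{H}(n,p,q,0,0,\mathcal{B})$. The main obstacle is precisely this sharpened supersaturation step in (ii) together with the companion task for (i) of exhibiting explicit bipartite $G$ saturating each end of \eqref{ineq in Main Theorem}; both require a careful bookkeeping of how the two color classes $A$ and $B$ of $G$ interact with the independent coverings realizing $q(\mathcal{M})$ and with the maximum-degree vertex $x$ defining $k$.
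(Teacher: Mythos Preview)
Your upper-bound argument has a genuine gap that makes the proof fail as written. You claim that after the stability partition $V_1,\ldots,V_p$, each $F[V_i]$ is $\mathcal{M}$-free and therefore has matching number and maximum degree at most $k-1$, giving $\sum_i e(F[V_i])\le f(k-1,k-1)$. But this is false already for the extremal construction $\mathcal{H}(n,p,q,k-1,k-1,K_q)$: under any balanced $p$-partition the $q-1$ universal vertices of the $K_{q-1}$ land in some part $V_i$, and each of them contributes $\Theta(n/p)$ edges inside $V_i$ from its adjacencies to the rest of that part. So $e(F[V_i])=\Theta(n)$, not $O(1)$, and its maximum degree is $\Theta(n)$, not $k-1$. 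What $\mathcal{M}$-freeness actually gives (via Lemma~\ref{main1}) is only $e(F[V_i])\le (q-1)|V_i|+O(1)$; summing this over all $i$ overshoots the target $h(n,p,q)+f(k-1,k-1)$ by a linear term.

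The fix, and the heart of the paper's argument, is to \emph{first} isolate a set $E$ of at most $q-1$ vertices that are adjacent to almost everything, and only \emph{then} apply the $f(k-1,k-1)$ bound to the parts of $F-E$. Note that this is the opposite of your ``bad'' set $S$: the $q-1$ special vertices are characterised by having \emph{many} neighbours in every class, not by missing $\Omega(n)$ cross-edges. Your embedding argument for $|S|\le q-1$ is accordingly backwards---to extend a copy of $M\in\mathcal{M}$ through $q$ designated covering vertices you need those vertices to have many common neighbours, not many non-neighbours. The paper handles this by locating a large induced $T_p(n_3p)$, iteratively peeling off vertices with dense adjacency to every class (their number is capped at $q-1$ by Lemma~\ref{main1}), and then running Simonovits' progressive induction (Lemma~\ref{progrssion induction}) together with Lemma~\ref{lemma for F(k,p)(1)} and the Chen--Gould--Pfender--Wei inequality (Lemma~\ref{lemma for F(k,p)}) to squeeze the residual intra-part edges down to $f(k-1,k-1)$. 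Both of these lemmas are absent from your sketch, and without them the edge count cannot be closed. For part~(\romannumeral2) the same structure applies; the additional ingredient is Proposition~\ref{3.9}, which shows $k=1$ whenever $q<|A|$, so that the $f(k-1,k-1)$ term vanishes---your ``slack $|A|>q$'' heuristic is pointing at the right phenomenon but does not by itself replace that proposition.
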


\begin{theorem}\label{non-bipartite graph}
Let $G$ be a non-bipartite graph and $n$ be sufficiently large. For $p\geq \chi(G)+1$, we have
$$\emph{ex}(n,G^{p+1})=h^\prime(n,p,q)+\emph{ex}(q-1,\mathcal{B}).$$
Moreover, the graphs in $\mathcal{H}(n,p,q,0,0,\mathcal{B})$ are the only extremal graphs for $G^{p+1}$.
\end{theorem}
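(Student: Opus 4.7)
The plan is to prove matching lower and upper bounds. For the lower bound, I would verify that every graph $H^{\ast}\in\mathcal{H}(n,p,q,0,0,\mathcal{B})$ is $G^{p+1}$-free: in such a graph the $\overline{K}_{q-1}$ block $U$ carries a $\mathcal{B}$-free subgraph $Q_{q-1}$ and is joined to a $T_p(n-q+1)$ with independent classes $W_1,\dots,W_p$. Any embedded $G^{p+1}$ would, by the definition of $\mathcal{M}(G^{p+1})$, force a copy of some $M\in\mathcal{M}$ to lie either entirely inside one $W_i$ (impossible, since each $W_i$ is independent) or partly inside $U$, in which case $V(M)\cap U$ is a covering of some bipartite member $F\in\mathcal{M}$ of order at most $q-1$, hence induces in $Q_{q-1}$ a subgraph belonging to $\mathcal{B}$---contradicting the $\mathcal{B}$-freeness of $Q_{q-1}$.

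For the upper bound, let $H$ be a $G^{p+1}$-free graph on $n$ vertices with $e(H)\geq h'(n,p,q)+\mathrm{ex}(q-1,\mathcal{B})$. Since $\chi(G^{p+1})=p+1$, the Erd\H{o}s--Simonovits stability theorem provides a $p$-partition $V(H)=V_1\cup\dots\cup V_p$ into nearly balanced classes whose defect (missing crossing edges plus intra-class edges) is $o(n^2)$, and the lemmas of Section~\ref{section:lemmas} can be used to refine the partition so that almost every vertex has almost complete neighbourhoods in all other classes. Call a vertex \emph{bad} if it has many missing crossing neighbours and \emph{good} otherwise, and let $B$ denote the bad set. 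The key structural claim, which I would prove via the decomposition-family machinery, is that no $M\in\mathcal{M}$ can sit on good vertices inside a single class: if it did, the common neighbourhood of $V(M)$ together with $t$ carefully chosen isolated good vertices of the same class in the remaining $p-1$ classes would still contain $T_{p-1}((p-1)t)$, so $(M\cup\overline{K}_t)+T_{p-1}((p-1)t)\subseteq H$ and hence $G^{p+1}\subseteq H$ by the definition of $\mathcal{M}(G^{p+1})$, a contradiction.

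The final step pins down the structure, and this is where non-bipartiteness of $G$ enters essentially. Unlike case~(i) of Theorem~\ref{maintheorem1}, where the larger colour class of the bipartite $G$ absorbs up to $f(k-1,k-1)$ extra intra-class edges, every embedding of the non-bipartite $G^{p+1}$ must simultaneously use all $p+1$ colour classes, so even a single intra-class edge on good vertices can be combined with the bad vertices in $B$ to form a copy of some $M\in\mathcal{M}$ inside an enlarged virtual class, which then yields $G^{p+1}$. Hence the good vertices span a clean $p$-partite subgraph, $|B|\leq q-1$, and $H[B]$ must be $\mathcal{B}$-free (otherwise a covering of some bipartite $F\in\mathcal{M}$ of order $\leq q-1$ sits inside $B$ and, joined to the neighbourhoods of $B$ in the good parts, completes to $G^{p+1}$). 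Counting edges yields $e(H)\leq h'(n,p,q)+\mathrm{ex}(q-1,\mathcal{B})$, with the extremal case forcing $H\in\mathcal{H}(n,p,q,0,0,\mathcal{B})$.

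The main obstacle is precisely the step forbidding intra-class edges on good vertices, which collapses the $(\nu,\Delta)$-flexibility present in case~(i) of Theorem~\ref{maintheorem1}. This step requires a delicate case analysis using the odd-cycle structure of $G$ to combine a putative extra edge with the bad set $B$ into a copy of some $M\in\mathcal{M}$, and then invoking the common-neighbourhood expansion to complete $G^{p+1}$. Once the partition structure is pinned down, the remaining counting and extremal characterisation follow by standard Simonovits-type arguments.
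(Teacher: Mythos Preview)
Your overall architecture (lower bound via $\mathcal{H}(n,p,q,0,0,\mathcal{B})$, upper bound via a Tur\'an-like partition with a small exceptional set) matches the paper, but you miss the one observation that makes the non-bipartite case immediate, and your substitute for it is not an argument.

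The paper does \emph{not} carry out any ``delicate case analysis using the odd-cycle structure of $G$'' to forbid intra-class edges. Instead it proves the one-line Proposition~\ref{3.9}: if $G$ is non-bipartite then $k=1$. The reason is that, by Lemma~\ref{liu lemma}, every bipartite member $H_S$ of $\mathcal{M}$ is obtained from $G$ by vertex splits; any independent covering $S$ of $H_S$ must contain at least one split vertex (otherwise $S$ would be an independent covering of $G$ itself, forcing $G$ bipartite), and split vertices have degree~$1$ in $H_S$. Hence $k=\min_{x\in S} d_{H_S}(x)=1$. Once $k=1$, the entire proof of Theorem~\ref{maintheorem1}(i) applies verbatim with $f(k-1,k-1)=f(0,0)=0$: Lemma~\ref{main1}(b) gives $\mathrm{ex}(n,\mathcal{M})=h'(n,1,q)+\mathrm{ex}(q-1,\mathcal{B})$, and the progressive-induction argument of Section~\ref{section5} forces $|E|=q-1$, no edges inside any $B_i'\cup C_i'$, and $e(F_n[E])\le\mathrm{ex}(q-1,\mathcal{B})$.

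Your proposed mechanism (``even a single intra-class edge on good vertices can be combined with the bad vertices in $B$ to form a copy of some $M\in\mathcal{M}$'') is exactly what $k=1$ delivers, but you have not said \emph{why} such an $M$ exists, and your stated justification---that ``every embedding of the non-bipartite $G^{p+1}$ must simultaneously use all $p+1$ colour classes''---is a non sequitur: $\chi(G^{p+1})=p+1$ holds equally when $G$ is bipartite, so this cannot be what separates case~(i) from the non-bipartite case. What you need is precisely the $k=1$ fact above; once you have it, no odd-cycle analysis is required and the proof is a two-line reduction to the bipartite argument. Your use of stability in place of progressive induction is a legitimate alternative for the coarse structure, but it does not help with the intra-class-edge step, which is the only place non-bipartiteness matters.
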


\section{Corollaries}\label{section:corollaries}

For a given graph $H$ with $\chi(H)=p+1\geq 3$, Erd\H{o}s-Stone-Simonovits theorem tells us that  the structure of the extremal graphs for $H$ are close to the Tur\'{a}n graph $T_p(n)$. More precisely, any extremal graph for $H$ can be obtained from  $T_p(n)$ by adding and deleting at most $o(n^2)$ edges. The decomposition family of a forbidden graph $H$ often helps us to determine the fine structure of the extremal graphs for $H$. Hence, we need the following lemmas concerning the extremal graphs of the decomposition family of $G^{p+1}$.

Given a graph $H$, {\it a vertex split} on some vertex $v\in V(H)$ is defined as follows: replace $v$ by an independent set of size $d(v)$ in which each vertex is adjacent to exactly one distinct vertex in $N_H(v)$. Denote by $\mathcal{H}(H)$ the family of graphs that can be obtained from $H$ by applying a vertex split on some $U\subseteq V(H)$. Obviously each graph in $\mathcal{H}(H)$ has $e(H)$ number of edges. Note that $U$ could be empty, therefore $H\in\mathcal{H}(H)$. For example, $\mathcal{H}(P_{k+1})$ is the family of all linear forests with $k$ edges and $\mathcal{H}(C_k)$ consists of $C_k$ and all linear forests with $k$ edges.

The following lemma is proved in \cite{Liu2013}.
\begin{lemma}[Liu \cite{Liu2013}]\label{liu lemma}
Given a graph $G$ with $2\leq \chi(G)\leq p-1$, we have $\mathcal{M}=\mathcal{H}(G)$, in particular, a matching of size $e(G)$ is in $\mathcal{M}$.
In particular, if $H\in \mathcal{M}$, then after splitting any vertex set of $H$, the resulting graph also belongs to $\mathcal{M}$.
\end{lemma}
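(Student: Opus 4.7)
The plan is to prove $\mathcal{M}(G^{p+1})=\mathcal{H}(G)$ by a two-sided inclusion. The engine behind both directions is the following clique-capacity fact: inside $(M\cup\overline{K}_{t})+T_{p-1}((p-1)t)$, any clique that avoids $V(M)$ lives in the induced subgraph $T_{p-1}((p-1)t)\cup\overline{K}_{t}$ and therefore has size at most $p$ (one vertex per $T_{p-1}$-class plus at most one from $\overline{K}_{t}$), so every $K_{p+1}$ must pick up at least two vertices from $V(M)$, and these two are adjacent in $M$. A companion bookkeeping fact is that each $M$-edge is used by at most one $K_{p+1}$ of $G^{p+1}$, because a new vertex of $G^{p+1}$ lies in a unique $K_{p+1}$ and two old vertices of $G$ are joined by at most one edge of $G$.

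For $\mathcal{H}(G)\subseteq\mathcal{M}$, fix $H\in\mathcal{H}(G)$ obtained by splitting a set $U\subseteq V(G)$, and use the hypothesis $\chi(G)\le p-1$ to fix a proper $(p-1)$-coloring $c$ of $G$. Place each $v\in V(G)\setminus U$ at its representative vertex in $V(H)$ and each $v\in U$ at a vertex of class $c(v)$ of $T_{p-1}((p-1)t)$. For each edge $uv\in E(G)$, assemble its $K_{p+1}$ from $u,v$, the split copies of $u,v$ in $V(H)$ that make up the corresponding $H$-edge (zero, one, or two of them according as $0,1,$ or $2$ of $u,v$ lie in $U$), and enough fresh $T_{p-1}$-vertices drawn from unused classes; adjacencies follow from the join and the cross-class edges of $T_{p-1}$, and disjointness across $G$-edges is ensured by taking $t\ge e(G)$. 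For minimality of $H$, observe that in any embedding of $G^{p+1}$ into $(H'\cup\overline{K}_{t})+T_{p-1}((p-1)t)$ with $H'\subsetneq H$, the clique-capacity fact together with the bookkeeping fact forces at least $e(G)$ distinct edges of $H'$, contradicting $e(H')<e(G)=e(H)$.

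For $\mathcal{M}\subseteq\mathcal{H}(G)$, take $M\in\mathcal{M}$ with an embedding $\phi$ of $G^{p+1}$ and set $r_{uv}=|\phi(C_{uv})\cap V(M)|\ge 2$ for each $uv\in E(G)$; the two facts above give $e(M)=\sum_{uv\in E(G)}{r_{uv}\choose 2}$. Minimality then forces $r_{uv}=2$ for every edge: if some $r_{uv}\ge 3$, then at least one vertex $z$ of $\phi(C_{uv})\cap V(M)$ is the image of a \emph{new} vertex of $G^{p+1}$, and $z$ can be relocated (enlarging $t$ if needed) into the unique unused class of $T_{p-1}$ in that clique; deleting the two $M$-edges incident to $z$ inside the $K_{r_{uv}}$ then yields a proper subgraph of $M$ still hosting $G^{p+1}$, contradicting minimality. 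With $r_{uv}=2$ throughout, $\phi$ sets up a bijection $E(G)\leftrightarrow E(M)$ together with a natural labeling of $V(M)$ by $V(G)$ --- old-vertex preimages keep their $G$-labels, while a new-vertex preimage sitting in $V(M)$ inherits the $G$-label of the unique endpoint it impersonates in its clique --- and this labeling realizes $M$ as a vertex-split of $G$, i.e.\ $M\in\mathcal{H}(G)$. The main obstacle is the relocation argument: one has to verify that the moved vertex fits into some available $T_{p-1}$-class simultaneously across all cliques and that the freed $M$-edges are not secretly needed by another $G$-edge --- both handled by the unique-correspondence facts already established. Closure of $\mathcal{H}(G)$ under further vertex splits then yields the corollary that $\mathcal{M}$ is closed under vertex splits, and the full-split case gives the matching of size $e(G)$ in $\mathcal{M}$.
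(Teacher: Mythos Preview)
The paper does not prove this lemma; it is attributed to Liu~\cite{Liu2013} and quoted without argument. Your two-sided inclusion via the clique-capacity observation (any $K_{p+1}$ in $(M\cup\overline{K}_t)+T_{p-1}((p-1)t)$ must pick up an $M$-edge) and the edge-disjointness bookkeeping is the natural route, and it is correct in substance.

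A few imprecisions worth tightening. First, in the relocation step you write ``the unique unused class of $T_{p-1}$''; when $r_{uv}\ge 3$ there may be several unused classes, though one suffices. Likewise ``deleting the two $M$-edges incident to $z$'' should be $r_{uv}-1$ edges, or more cleanly, delete $z$ itself: since $z$ is the image of a new vertex, every $M$-edge at $z$ lies inside $\phi(C_{uv})$, so the relocated embedding lands in $((M-z)\cup\overline{K}_{t'})+T_{p-1}((p-1)t')$. Second, the equality $e(M)=\sum_{uv}\binom{r_{uv}}{2}$ needs minimality (not just the two facts you cite): clique-capacity plus bookkeeping only yields $e(M)\ge\sum_{uv}\binom{r_{uv}}{2}$; that every $M$-edge is covered uses that $M$ has no unused edges and no isolated vertices, both consequences of minimality you should state. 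Third, ``the unique endpoint it impersonates'' is not literally unique when both $\phi(u),\phi(v)\notin V(M)$: the two new-vertex preimages in $\psi(uv)$ can be labeled $u$ or $v$ either way. This ambiguity is harmless --- the two choices differ by a leaf-swap automorphism of the split graph --- but say so, rather than asserting uniqueness. With these small clarifications your argument is complete.
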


Let $K_{n_1,\ldots,n_p}$ be the complete $p$-partite graph with class sizes $n_1,\ldots, n_p$. Denote by $K_{n_1,\ldots,n_p}(n,H_{n_1})$ the graph obtained by embedding $H_{n_1}$ into the class of $K_{n_1,\ldots,n_p}$ with size $n_1$.

\begin{proposition}\label{L-critical extremal 2}
Let  $F_{n_1}$ be an extremal graph for $\mathcal{M}$ on $n_1$ vertices. Then $K_{n_1,\ldots,n_p}(n,F_{n_1})$ does not contain $G^{p+1}$ as a subgraph.
\end{proposition}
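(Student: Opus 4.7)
The plan is to argue by contradiction: from a hypothetical copy of $G^{p+1}$ inside $K_{n_1,\ldots,n_p}(n,F_{n_1})$ I will extract a subgraph $M^{\ast}\subseteq F_{n_1}$ that belongs to the decomposition family $\mathcal{M}=\mathcal{M}(G^{p+1})$, which contradicts the fact that the extremal graph $F_{n_1}$ for $\mathcal{M}$ is $\mathcal{M}$-free. The argument is essentially definition-chasing, and the key step is recognizing that the ``partite trace'' of the hypothetical $G^{p+1}$ on $V_1$ is exactly the ingredient appearing in the definition of $\mathcal{M}$.

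Concretely, suppose $G^{p+1}$ embeds into $K_{n_1,\ldots,n_p}(n,F_{n_1})$. Let $V_1,\ldots,V_p$ be the partite sets, with the copy of $F_{n_1}$ sitting on $V_1$, and set $W_i:=V(G^{p+1})\cap V_i$ together with $H:=F_{n_1}[W_1]$. Since $V_2,\ldots,V_p$ are independent sets and all crossing edges between different $V_i$'s are present, the subgraph induced on $W_1\cup\cdots\cup W_p$ equals $H+K_{|W_2|,\ldots,|W_p|}$, so
\[
G^{p+1}\ \subseteq\ H+K_{|W_2|,\ldots,|W_p|}.
\]
Setting $t:=\max_{2\le i\le p}|W_i|$, the complete multipartite graph on the right embeds into the balanced Tur\'an graph $T_{p-1}((p-1)t)$, giving
\[
G^{p+1}\ \subseteq\ H+T_{p-1}((p-1)t)\ \subseteq\ (H\cup\overline{K}_t)+T_{p-1}((p-1)t).
\]
This is precisely the containment shape that defines membership in $\mathcal{M}$.

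Now within $H$, pick a subgraph $M^{\ast}$ that is minimal with respect to the property ``for some $t'$, a copy of $G^{p+1}$ fits into $(M^{\ast}\cup\overline{K}_{t'})+T_{p-1}((p-1)t')$''. By the very definition of the decomposition family, any such minimal witness lies in $\mathcal{M}$. Since $M^{\ast}\subseteq H\subseteq F_{n_1}$, the graph $F_{n_1}$ contains a copy of an element of $\mathcal{M}$, contradicting its $\mathcal{M}$-freeness. The only delicate point in the plan is verifying that the minimal witness genuinely belongs to $\mathcal{M}$ and not merely to some larger family, but this is immediate from monotonicity of the defining property (enlarging the first factor or the parameter $t'$ can only make the embedding easier), which guarantees that a minimal such subgraph inside $H$ exists and, by definition, is an element of $\mathcal{M}$.
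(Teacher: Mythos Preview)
Your proof is correct and follows exactly the approach the paper has in mind: the paper's own proof consists of the single sentence ``follows directly from definition of decomposition family,'' and what you have written is precisely the unpacking of that definition. Your extraction of $H=F_{n_1}[W_1]$, the observation that $G^{p+1}\subseteq (H\cup\overline{K}_t)+T_{p-1}((p-1)t)$, and the passage to a minimal witness $M^\ast\in\mathcal{M}$ are all sound and constitute the intended argument in full detail.
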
\begin{proof}
Proposition~\ref{L-critical extremal 2} follows directly from definition of decomposition family.\end{proof}

Theorem~\ref{maintheorem1} implies the results of Erd\H{o}s \cite{erdHos1962}, Moon, \cite{moon1968} and Simonovits \cite{Simonovits1968} for edge-blow up of matchings, results of Erd\H{o}s, F\"{u}redi, Gould, and Gunderson \cite{erdHos1995} and Chen, Gould, Pfender, and Wei \cite{chen2003} for edge-blow up of stars and results of Glebov \cite{glebov2011} and Liu \cite{Liu2013} for edge-blow up of paths and even cycles. Theorems~\ref{non-bipartite graph} implies the result of Liu \cite{Liu2013} for edge-blow up of odd cycles. We state those results as corollaries of Theorems~\ref{maintheorem1} and~\ref{non-bipartite graph}. In the following of this section, we will deduce the above results from our main theorems by applying Lemma~\ref{liu lemma}.

\begin{corollary}[Erd\H{o}s \cite{erdHos1962}, Moon \cite{moon1968} and Simonovits \cite{Simonovits1968}]\label{coro1}
Let $G=M_{2t}$ be a matching on $2t$ vertices and $p\geq2$. Then for sufficiently large $n$, we have
$$\emph{ex}(n,M_{2t}^{p+1})=h(n,p,t).$$
Moreover, $H(n,p,t)$ is the unique extremal graph for $M_{2t}^{p+1}$.
\end{corollary}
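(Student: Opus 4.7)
The plan is to derive Corollary~\ref{coro1} as a direct specialization of Theorem~\ref{maintheorem1}(i) applied to $G=M_{2t}$, so the whole content of the corollary reduces to correctly identifying the parameters $q$, $k$, and $\mathcal{B}$ attached to the blow-up $M_{2t}^{p+1}$. Since $\chi(M_{2t})=2$ and $p\ge 3$, Lemma~\ref{liu lemma} gives $\mathcal{M}(M_{2t}^{p+1})=\mathcal{H}(M_{2t})$. Each vertex of $M_{2t}$ has degree $1$, so any vertex split replaces a vertex $v$ by an independent set of size $d(v)=1$ and hence does nothing; consequently $\mathcal{M}=\mathcal{H}(M_{2t})=\{M_{2t}\}$.

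With this identification the remaining parameters unfold quickly. The bipartition of $M_{2t}$ is balanced with $|A|=t$. An independent covering of $M_{2t}$ must contain at least one endpoint from each of its $t$ disjoint edges, and any such choice is automatically independent; hence $q=q(\mathcal{M})=t$. In particular $q=|A|$, placing us in case (i) of Theorem~\ref{maintheorem1}. Every vertex belonging to an independent covering $S\in\mathcal{S}(\mathcal{M})$ has degree $1$ in $M_{2t}$, so $k=1$ and $f(k-1,k-1)=f(0,0)=0$. Finally, since $\beta(M_{2t})=t=q$, the convention in the definition of the subgraph covering family forces $\mathcal{B}=\{K_t\}$; consequently $\mbox{ex}(q-1,\mathcal{B})=\mbox{ex}(t-1,K_t)=\binom{t-1}{2}$, attained uniquely by $K_{t-1}$.

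Plugging these values into inequality~(\ref{ineq in Main Theorem}) of Theorem~\ref{maintheorem1}(i) yields
$$h^\prime(n,p,t)+\binom{t-1}{2}\ \le\ \mbox{ex}(n,M_{2t}^{p+1})\ \le\ h(n,p,t)+0.$$
Because $h(n,p,t)-h^\prime(n,p,t)=e(K_{t-1})-e(\overline{K}_{t-1})=\binom{t-1}{2}$, the two bounds coincide and equal $h(n,p,t)$, which proves the numerical statement. For uniqueness, the lower-bound construction from Theorem~\ref{maintheorem1} lies in $\mathcal{H}(n,p,t,0,0,\{K_t\})$, and this family is a singleton: $E_{0,0}$ is empty and the only $K_t$-free graph on $t-1$ vertices with $\binom{t-1}{2}$ edges is $K_{t-1}$, which reassembles precisely $H(n,p,t)$. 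The main obstacle is not the computation but rather the precise bookkeeping around the $\mathcal{B}=\{K_t\}$ convention and the verification that the two sides of~(\ref{ineq in Main Theorem}) collapse; once these are in place, the corollary follows instantly from Theorem~\ref{maintheorem1}.
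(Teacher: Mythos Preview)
Your argument is correct and follows essentially the same route as the paper: identify $\mathcal{M}=\{M_{2t}\}$, compute $q=|A|=t$, $k=1$, $\mathcal{B}=\{K_q\}$, and observe that the two sides of~(\ref{ineq in Main Theorem}) collapse to $h(n,p,t)$. Your exposition is in fact more explicit than the paper's about why vertex splitting is trivial here and why $\mathcal{B}=\{K_t\}$.

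Two small points. First, the corollary is stated for $p\ge 2$, while Theorem~\ref{maintheorem1} (and your derivation) require $p\ge 3$; the paper disposes of the case $p=2$ separately by a one-line reference to Corollary~\ref{matching2}, and you should do the same. Second, your uniqueness paragraph is phrased in terms of the lower-bound construction, but uniqueness actually comes from the upper-bound side: the proof of Theorem~\ref{maintheorem1}(i) shows that whenever $\mbox{ex}(n,G^{p+1})=h(n,p,q)+f(k-1,k-1)$ one has $\mbox{EX}(n,G^{p+1})=\mathcal{H}(n,p,q,k-1,k-1,K_q)$, which here is the singleton $\{H(n,p,t)\}$. The paper's proof of the corollary is equally terse on this point, but that is the correct mechanism to invoke.
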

\begin{proof}
Clearly, we have $\mathcal{M}=\{M_{2t}\}$. Applying Theorem~\ref{maintheorem1} with $q=|A|=t$, $k=1$, $p\geq 3$, and $\mathcal{B}=\{K_q\}$, the lower and upper bounds of (\ref{ineq in Main Theorem}) are the same. Thus we have $\mbox{ex}(n,M_{2t}^{p+1})=h(n,p,t).$  The proof of Corollary~\ref{coro1} for $p\geq 3$ is complete. Since $\mathcal{M}$ contains only a matching $M_{2t}$, the proof of Theorem~\ref{maintheorem1} implies Corollary~\ref{coro1}  for $p=2$ (see Corollary~\ref{matching2}).
\end{proof}

\begin{corollary}[Erd\H{o}s, et al. \cite{erdHos1995} and Chen, et al. \cite{chen2003}]\label{coro2}
Let $G=S_{t+1}$ be a star on $t+1$ vertices and $p\geq 2$. Then, for sufficiently large $n$, we have
$$\emph{ex}(n,S_{t+1}^{p+1})=h(n,p,1)+f(t-1,t-1).$$
Moreover, the extremal graphs are characterized.
\end{corollary}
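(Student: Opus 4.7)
My plan is to specialize Theorem~\ref{maintheorem1}(i) to $G=S_{t+1}$, using Lemma~\ref{liu lemma} to identify the decomposition family $\mathcal{M}=\mathcal{M}(S_{t+1}^{p+1})$ and its attached parameters.

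Assume first $p\geq 3$, so $\chi(S_{t+1})=2\leq p-1$ and Lemma~\ref{liu lemma} gives $\mathcal{M}=\mathcal{H}(S_{t+1})$. Splitting a leaf has no effect (its degree is one), while splitting the center yields the matching $M_{2t}$, so $\mathcal{M}=\{S_{t+1},M_{2t}\}$. The relevant parameters are then immediate: the center $c$ of $S_{t+1}$ is a one-vertex independent covering, while every independent covering of $M_{2t}$ has $t$ vertices, so $q=q(\mathcal{M})=1$, $\mathcal{S}(\mathcal{M})=\{\{c\}\}$, and $k=d_{S_{t+1}}(c)=t$. Since every graph in $\mathcal{M}$ has an edge, no covering has fewer than $q$ vertices, and thus $\mathcal{B}(\mathcal{M})=\{K_1\}$ with $\mathrm{ex}(q-1,\mathcal{B})=0$.

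Since the smaller color class of $S_{t+1}$ is $\{c\}$ with $|A|=1=q$, case (i) of Theorem~\ref{maintheorem1} applies, and substituting $h(n,p,1)=h^\prime(n,p,1)=t_p(n)$ gives
\[t_p(n)\leq \mathrm{ex}(n,S_{t+1}^{p+1})\leq t_p(n)+f(t-1,t-1).\]
To achieve the upper bound I would embed any $E_{t-1,t-1}\in \mathcal{E}_{t-1,t-1}$ into one part $V_1$ of $T_p(n)$; the resulting graph has precisely $t_p(n)+f(t-1,t-1)$ edges, and the extremal graphs will be identified as exactly such augmentations.

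The crux is verifying this construction is $S_{t+1}^{p+1}$-free. Since $T_p(n)$ contains no $K_{p+1}$, every $(p+1)$-clique must use an edge of the embedded $E_{t-1,t-1}$, and therefore consists of the two $V_1$-endpoints of such an edge together with one vertex from each of $V_2,\ldots,V_p$. A hypothetical center $c$ of a copy of $S_{t+1}^{p+1}$ would then yield one of two contradictions: either (a) $c\in V_1$ and the $t$ cliques meeting at $c$ force $t$ distinct $V_1$-neighbors of $c$ in $E_{t-1,t-1}$, contradicting $\Delta(E_{t-1,t-1})\leq t-1$; or (b) $c\notin V_1$ and the $t$ cliques use $t$ pairwise vertex-disjoint edges of $E_{t-1,t-1}$, contradicting $\nu(E_{t-1,t-1})\leq t-1$. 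This dichotomy is the step I expect to be the most delicate, since one must track both how the center sits relative to the augmented class and how the remaining clique vertices are chosen from $V_2,\ldots,V_p$. The case $p=2$ is outside the scope of Lemma~\ref{liu lemma} and would be treated by a direct argument in parallel to the $p=2$ discussion for Corollary~\ref{coro1}, noting that the same $V_1$-augmentation of $T_2(n)$ still provides the tight construction.
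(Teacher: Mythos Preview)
Your proposal is correct and follows the paper's approach: compute $\mathcal{M}=\{S_{t+1},M_{2t}\}$ via Lemma~\ref{liu lemma}, read off $q=|A|=1$, $k=t$, $\mathcal{B}=\{K_1\}$, and apply Theorem~\ref{maintheorem1}(i), with the $p=2$ case deferred to the proof of the main theorem. The step you flag as ``most delicate''---that embedding $E_{t-1,t-1}$ into one class of $T_p(n)$ yields an $S_{t+1}^{p+1}$-free graph---is actually immediate from Proposition~\ref{L-critical extremal 2}, since $E_{t-1,t-1}$ is $\mathcal{M}$-free by definition of $\mathcal{E}_{t-1,t-1}$; your direct case analysis on the location of the center is correct but not needed.
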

\begin{proof}
By Lemma~\ref{liu lemma}, we have $\mathcal{M}=\{M_{2t},S_{t+1}\}$.
Note that the graphs in $\mathcal{H}(n,p,1,k-1,k-1,K_2)$ does not contain $S_{t+1}^{p+1}$ as a subgraph (by Proposition~\ref{L-critical extremal 2}).
Applying Theorem~\ref{maintheorem1}(\romannumeral1) with $q=|A|=1$, $k=t$, $p\geq 3$, and $\mathcal{B}=\{K_1\}$, we have ex$(n,S_{t+1}^{p+1})\leq h(n,p,1)+f(t-1,t-1)$.
The proof of Corollary~\ref{coro2} for $p\geq 3$ is complete.
Note that $\mathcal{M}$ contains a matching.
The proof of Theorem~\ref{maintheorem1} implies\footnote{We omit the proof, since it is essentially the same as the proof of Theorem~\ref{maintheorem1}.} Corollary~\ref{coro2} for $p=2$.
\end{proof}

\begin{corollary}[Glebov \cite{glebov2011} and Liu \cite{Liu2013}]\label{coro3}
Let $G=P_{t}$ be a path on $t$ vertices and $p\geq 3$. Then, for sufficiently large $n$, we have
$$\emph{ex}(n,P_{t}^{p+1})=h\left(n,p,\left\lfloor\frac{t}{2}\right\rfloor\right)+i,$$
where $i=1$ when $t$ is odd and $i=0$ when $t$ is even.
\end{corollary}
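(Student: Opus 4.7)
The plan is to apply Theorem~\ref{maintheorem1}(\romannumeral1) to $G = P_t$. Since $\chi(P_t) = 2 \leq p - 1$, Lemma~\ref{liu lemma} identifies $\mathcal{M} := \mathcal{M}(P_t^{p+1})$ with $\mathcal{H}(P_t)$, the family of all linear forests on $t - 1$ edges, and the bipartition of $P_t$ has $|A| = \lfloor t/2 \rfloor$.

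The key computations are as follows. For any $F \in \mathcal{M}$ with components $P_{a_1}, \ldots, P_{a_s}$ (so $\sum_i (a_i - 1) = t - 1$), the minimum independent cover of $F$ has size $\sum_i \lfloor a_i/2 \rfloor = (t - 1 + s - o)/2$, where $o$ is the number of odd $a_i$'s. Minimizing yields $q(\mathcal{M}) = \lfloor t/2 \rfloor = |A|$, attained exactly when every component is odd (if $t$ is odd) or when precisely one component is even (if $t$ is even); in particular Theorem~\ref{maintheorem1}(\romannumeral1) is applicable. K\"onig's theorem then gives $\beta(F) \geq q$ for every $F \in \mathcal{M}$, so $\mathcal{B} = \{K_q\}$ and $\mbox{ex}(q - 1, \mathcal{B}) = {q-1 \choose 2}$. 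To pin down $k$: when $t$ is even, the unique even component of an optimal $F$ admits a minimum cover that contains an endpoint (of degree $1$), so $k = 1$ and $f(k-1, k-1) = 0$; when $t$ is odd, every optimal $F$ has only odd components and the unique minimum cover of each odd $P_a$ consists of its even-indexed vertices (all of degree $2$), so $k = 2$ and $f(1, 1) = 1$. Plugging into (\ref{ineq in Main Theorem}) finishes the even case (both bounds collapse to $h(n,p,q)$) and yields the upper bound $\mbox{ex}(n, P_t^{p+1}) \leq h(n, p, q) + 1$ when $t$ is odd.

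It remains to provide a matching construction for odd $t$. Let $G^{\ast} = K_{q-1} + T_p(n - q + 1) + e$, where $e = u_1 u_2$ is an edge added inside a single class of $T_p(n - q + 1)$; this graph has exactly $h(n, p, q) + 1$ edges. The main obstacle is verifying that $G^{\ast}$ is $P_t^{p+1}$-free. Because $T_p(n - q + 1)$ is $K_{p+1}$-free, every $K_{p+1}$ of $G^{\ast}$ is of \emph{Type A} (containing both $u_1$ and $u_2$ via the edge $e$) or \emph{Type B} (containing at least one vertex of $K_{q-1}$). In a hypothetical copy $C_1, \ldots, C_{t-1}$ of $P_t^{p+1}$ one has $|C_i \cap C_j| = 1$ iff $|i - j| = 1$ and $C_i \cap C_j = \emptyset$ otherwise; hence at most one $C_i$ can be Type A (two Type A cliques would share both $u_1$ and $u_2$), and each vertex of $K_{q-1}$ lies in at most two consecutive cliques, bounding the Type B count by $2(q - 1)$. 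Since $t - 1 = 2q$ for odd $t$, one would need $2q$ cliques in total, but only $1 + 2(q - 1) = 2q - 1$ are available, a contradiction. This closes the odd case and completes the proof.
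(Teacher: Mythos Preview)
Your proof is correct and follows the same overall approach as the paper: compute the parameters $q$, $k$, $\mathcal{B}$ for $\mathcal{M}=\mathcal{H}(P_t)$ and plug into Theorem~\ref{maintheorem1}(\romannumeral1). Your derivations of $q=\lfloor t/2\rfloor$, $\mathcal{B}=\{K_q\}$ (via K\"onig), and the case split $k=1$ (even $t$) versus $k=2$ (odd $t$) are all sound.

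The one substantive difference is in the odd case. The paper simply asserts that the graphs in $\mathcal{H}(n,p,q,1,1,K_q)$ are $P_t^{p+1}$-free and moves on; you actually prove it, via the Type~A/Type~B clique-counting argument. Your argument is clean and correct: every $K_{p+1}$ in $G^\ast$ either uses the extra edge $e$ (at most one such clique, since two would overlap in two vertices) or meets $K_{q-1}$, and since each vertex of $P_t^{p+1}$ lies in at most two edge-cliques, the $q-1$ vertices of $K_{q-1}$ can account for at most $2(q-1)$ cliques, giving $1+2(q-1)=2q-1<2q=t-1$. This fills a gap that the paper leaves to the reader. (A minor remark: the word ``consecutive'' in your Type~B discussion is true but unnecessary for the bound; only ``at most two'' is used.)
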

\begin{proof}
By Lemma~\ref{liu lemma}, $\mathcal{M}$ consists of all linear forests with $t-1$ edges.
For a linear forest $F$ in $\mathcal{M}$ consisting of  paths $P_{t_1}$, $P_{t_2}$, $\ldots$, $P_{t_\ell}$,  each covering of $F$ has at least $\sum_{i=1}^{\ell}\lfloor t_i/2\rfloor\geq (t-1)/2$ vertices.
If $t$ is even, then $k=1$. Since $(q-1)S_3\cup S_2\in \mathcal{M}$ and the minimum non-independent coverings of linear forests with $t-1$ edges is $t/2+1$, applying Theorem~\ref{maintheorem1}(\romannumeral1) with $q=|A|= t/2$, $k=1$, $p\geq 3$, and $\mathcal{B}=\{K_q\}$, the lower and upper bounds of (\ref{ineq in Main Theorem}) are the same. Assume that $t$ is odd. Then $k=2$. Note that the graphs in $\mathcal{H}(n,p,q,1,1,K_q)$ do not contain a copy of $P_{t}^{p+1}$, where $q=|A|=\lfloor t/2\rfloor$. It follows from the upper bound of (\ref{ineq in Main Theorem}) that $\mbox{ex}(n,P_{t}^{p+1})=h(n,p,q)+1$. Thus, the proof of Corollary~\ref{coro3} is complete.
\end{proof}

\begin{corollary}[Liu \cite{Liu2013}]\label{coro4}
Let $G=C_{t}$ be a cycle on $t$ vertices. Then, for sufficiently large $n$, we have the following:\\
(a) If $t$ is even and $p\geq 3$, then
$$\emph{ex}(n,C_{t}^{p+1})=h\left(n,p,\left\lfloor\frac{t}{2}\right\rfloor\right)+1.$$
(b) If $t$ is odd and $p\geq 4$, then
$$\emph{ex}(n,C_{t}^{p+1})=h\left(n,p,\left\lceil\frac{t}{2}\right\rceil\right).$$
\end{corollary}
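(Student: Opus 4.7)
The plan is to identify the decomposition family $\mathcal{M}=\mathcal{M}(C_t^{p+1})$, extract the parameters $q$, $k$ and $\mathcal{B}$ that feed Theorems~\ref{maintheorem1} and~\ref{non-bipartite graph}, and then specialise. Since $\chi(C_t)\le p-1$ in both parts (that is, $2\le p-1$ for even $t$ with $p\ge3$ and $3\le p-1$ for odd $t$ with $p\ge4$), Lemma~\ref{liu lemma} gives $\mathcal{M}=\mathcal{H}(C_t)$: this family consists of $C_t$ together with every linear forest on $t$ edges obtained by splitting a non-empty subset of $V(C_t)$; in particular splitting one vertex produces $P_{t+1}$ and splitting all of them produces $M_{2t}$. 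For any such graph $F$, the minimum vertex cover number satisfies $\beta(F)\ge\lceil t/2\rceil$ (for a linear forest with paths $P_{t_1},\dots,P_{t_\ell}$ one has $\beta(F)=\sum\lfloor t_i/2\rfloor\ge\sum(t_i-1)/2=t/2$, with equality forcing all $t_i$ odd); consequently $\mathcal{B}(\mathcal{M})=\{K_q\}$ and $\mbox{ex}(q-1,\mathcal{B})={q-1\choose 2}$ in both parts.

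For part (b), $t$ is odd so $C_t$ is not bipartite, and $q=q(\mathcal{M})$ is controlled entirely by the linear forests. A linear forest with every $t_i$ odd has an even total edge count, so when $t$ is odd at least one path must have even order and hence $\beta\ge\lceil t/2\rceil=(t+1)/2$, with equality attained by $P_{t+1}$ (covered at its even positions). Thus $q=\lceil t/2\rceil$, and Theorem~\ref{non-bipartite graph} yields
$$\mbox{ex}(n,C_t^{p+1})=h'(n,p,q)+{q-1\choose 2}=h\!\left(n,p,\lceil t/2\rceil\right),$$
which is precisely the formula in (b).

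For part (a), $t$ is even, so $C_t\in\mathcal{M}$ is bipartite with $|A|=|B|=t/2$; thus $q=t/2=|A|$ and Theorem~\ref{maintheorem1}(i) applies. In a minimum independent covering of $C_t$ (one colour class) or of a bipartite forest in $\mathcal{M}$ attaining $\beta=t/2$ (every path of odd order, covered at its even positions), every cover vertex has degree exactly $2$ in the underlying graph; hence $k=2$ and $f(k-1,k-1)=f(1,1)=1$. The inequality (\ref{ineq in Main Theorem}) then reads
$$h(n,p,t/2)\le\mbox{ex}(n,C_t^{p+1})\le h(n,p,t/2)+1.$$

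The main obstacle is to close this gap of one edge by producing an extremal construction. Take $G_\star:=H(n,p,t/2)+e$, where $e$ is a single extra edge placed inside some class of the embedded $T_p(n-t/2+1)$; then $G_\star\in\mathcal{H}(n,p,t/2,1,1,K_{t/2})$ has exactly $h(n,p,t/2)+1$ edges. To verify $C_t^{p+1}\not\subseteq G_\star$, write $V_0$ for the $(t/2-1)$-clique and $V_1,\dots,V_p$ for the Tur\'an parts, with $e\subseteq V_1$. Any $K_{p+1}$ in $G_\star$ uses at most one vertex from each $V_i$, except that it may use both endpoints of $e$ in $V_1$; consequently a $K_{p+1}$ avoids $V_0$ only when it is the ``$e$-clique'', and since $e$ has a unique endpoint pair at most one of the $t$ cliques in a hypothetical $C_t^{p+1}$ can do so. Let $k_0$ be the number of cycle vertices of this $C_t^{p+1}$ lying in $V_0$ and $m$ the number of distinct ``new'' $V_0$-vertices used; double-counting $V_0$-appearances across the $t$ cliques (each cycle vertex belongs to exactly two consecutive cliques, each new vertex to only one) gives
$$t-1\le\sum_{j=1}^{t}|V_0\cap K_{p+1}^{(j)}|=2k_0+m\le 2k_0+(|V_0|-k_0)=k_0+t/2-1,$$
so $k_0\ge t/2$, contradicting $k_0\le|V_0|=t/2-1$. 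Combined with the upper bound above, this yields $\mbox{ex}(n,C_t^{p+1})=h(n,p,t/2)+1$, completing (a).
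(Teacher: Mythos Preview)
Your proof is correct and follows the same overall route as the paper: compute $\mathcal{M}$ via Lemma~\ref{liu lemma}, extract $q$, $k$, $\mathcal{B}$, and feed them into Theorem~\ref{maintheorem1}(i) for even $t$ and Theorem~\ref{non-bipartite graph} for odd $t$. The parameter identifications ($q=t/2$, $k=2$, $\mathcal{B}=\{K_q\}$ for even $t$; $q=\lceil t/2\rceil$, $\mathcal{B}=\{K_q\}$ for odd $t$) match the paper's exactly.

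The one substantive difference is how the lower-bound construction in part~(a) is verified. The paper simply asserts that the graphs in $\mathcal{H}(n,p,t/2,1,1,K_{t/2})$ are $C_t^{p+1}$-free ``by Proposition~\ref{L-critical extremal 2}'', which tacitly requires that the graph sitting in one class (a $K_{t/2-1}$ joined to an independent set plus one extra edge) is $\mathcal{M}$-free---a fact the paper leaves implicit. You instead bypass $\mathcal{M}$ entirely and give a direct double-counting argument on the $t$ cliques of a hypothetical $C_t^{p+1}$: at most one clique can avoid $V_0$ (since avoiding $V_0$ forces the clique to contain both endpoints of $e$), and then counting $V_0$-incidences yields $k_0\ge t/2>|V_0|$. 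This is a clean, self-contained alternative that does not rely on analysing which linear forests or cycles embed in the one-class graph; it is slightly longer but arguably more transparent than the paper's appeal to Proposition~\ref{L-critical extremal 2}.
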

\begin{proof}
By Lemma~\ref{liu lemma}, $\mathcal{M}$ consists of all linear forests with $t$ edges and the cycle of length $t$. Let $t$ be even. Since the graphs in  $\mathcal{H}(n,p,t/2,1,1,K_{t/2})$ do not contain $C_{t}^{p+1}$ as a subgraph (by Proposition~\ref{L-critical extremal 2}), applying Theorem~\ref{maintheorem1}(\romannumeral1) with $q=|A|=t/2$, $k=2$, $p\geq  \chi(C_t)+1 =3$ and $\mathcal{B}=\{K_q\}$, the lower bound and the upper bound of  Theorem~\ref{maintheorem1} are the same. The proof of Corollary~\ref{coro4}(a) is complete. Let $t$ be odd. Then Corollary~\ref{coro4}(b) follows from Theorem~\ref{non-bipartite graph} with $q=\lceil t/2\rceil$, $\mathcal{B}=\{K_q\}$,  and $p\geq \chi(C_t)+1 =4$.\end{proof}

\noindent {\bf Proof of Theorem~\ref{coro6}}:\\
\begin{proof} Let $G=K_t$. Denote by $S_{k,k}$ the graph on $2k$ vertices obtained by taking two copies of  $S_{k}$ and joining the centers of them with a new edge.
Since each bipartite graph in $\mathcal{M}$  is obtained by splitting at least $t-2$ vertices of $K_t $, we have $q=t-1+{t-2 \choose 2}={t-1 \choose 2}+1$ (the graph  $F\in\mathcal{M}(K_{t}^{p+1})$ consisting of $S_{t-1,t-1}$ and ${t-2 \choose 2}$ independent edges) and $\mathcal{B}=\{K_2\}$ (the edge joining the centers of $S_{t-1}$ in $S_{t-1,t-1}$ of $F$). Applying Theorem~\ref{non-bipartite graph}, we have $\mbox{ex}(n,K_{t}^{p+1})={t-1 \choose 2}\left (n-{t-1 \choose 2}\right)+t_p\left(n-{t-1 \choose 2}\right).$
Moreover, the extremal graphs  are characterized. The proof of Theorem~\ref{coro6} is complete.\end{proof}

The proof of Theorem~\ref{coro5} needs more efforts, so we move it to Section~\ref{section-proof-1.3}.

\section{Several technical lemmas}\label{section:lemmas}

The following simple propositions help us to determine the extremal graphs for $\mathcal{M}$.

\begin{proposition}\label{3.8}
Let $F$ be a bipartite graph.  Then we have $q(F)=|A|.$
\end{proposition}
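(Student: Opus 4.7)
The plan is to establish both inequalities $q(F)\leq |A|$ and $q(F)\geq |A|$ and combine them.

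For the upper bound, I will simply exhibit $A$ itself as an independent covering. Since $F$ is bipartite with bipartition $(A,B)$, the set $A$ is independent; and every edge of $F$ has exactly one endpoint in $A$, so $A$ meets every edge. Hence $q(F)\leq |A|$.

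For the lower bound, let $S$ be any independent covering of $F$. The key observation is that $V(F)\setminus S$ is also independent: if some edge $uv$ had both endpoints outside $S$, then $S$ would fail to cover $uv$. Thus $(S, V(F)\setminus S)$ is a bipartition of $F$ into two independent sets. Restricting to any connected component $C$ of $F$ that contains at least one edge, the $2$-coloring of $C$ is unique up to swapping the two color classes, so $S\cap V(C)$ equals exactly one of the two color classes of $C$ and therefore has size at least $\min(|A_C|,|B_C|)$. Summing over components (and noting that isolated vertices contribute nonnegatively) yields $|S|\geq \sum_C \min(|A_C|,|B_C|)$. By the paper's convention, the partition $(A,B)$ of $F$ is chosen so that $|A|$ is as small as possible, which, component by component, amounts to placing the smaller color class into $A$ and pushing any isolated vertices into $B$; hence $|A|=\sum_C\min(|A_C|,|B_C|)$. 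Combined with the upper bound, this gives $q(F)=|A|$.

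The argument is essentially structural and I do not anticipate a real obstacle. The only mild technicality is ensuring the disconnected case is handled cleanly: one must observe that isolated vertices impose no edges to cover and so may be freely assigned to $B$ in order to minimize $|A|$, and that within each nontrivial component the paper's bipartition convention indeed selects the smaller color class, matching precisely the lower bound produced by the uniqueness-of-bipartition argument.
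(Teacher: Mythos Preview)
Your proof is correct and follows essentially the same approach as the paper: both establish $q(F)\leq|A|$ by exhibiting $A$ as an independent covering, and for the lower bound both reduce to the fact that in each connected component any independent covering must coincide with one of the two color classes (you phrase this via the observation that $V(F)\setminus S$ is independent and then invoke uniqueness of the $2$-coloring, while the paper reaches the same conclusion by a direct contradiction argument). Your treatment of the disconnected case is the same in spirit as the paper's, just spelled out a bit more explicitly.
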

\begin{proof} Since $A$ is an independent covering of $F$, we have $q(F)\leq|A|$.
Suppose $F$ is connected. Then each independent covering of $F$ must contain either all the vertices of $A$ or all the vertices of $B$.
Indeed, assume that $A_1\subsetneq A$, $B_1\subsetneq B$ are two non-empty vertex sets and $A_1\cup B_1$ is an independent covering of $F$.
Let $A_2=A-A_1$ and $B_2=B-B_1$.
Since $F$ is connected and $A_1\cup B_1$ is an independent set, there is some edge between $A_2$ and $B_2$, contradicting that $A_1\cup B_1$ is a  covering of $F$.
Hence we have $q(F)=|A|$.
If $F$ is disconnected, the result follows easily by studying each component of $F$ (recall that we always partition $F$ with $|A|$ as small as possible).
The proof is complete.\end{proof}

We need the following proposition to determine the extremal graphs for $G^{p+1}$ when $k=1$ (recall definitions of $\mathcal{S}(\mathcal{M})$, $q$ and $k$).

\begin{proposition}\label{3.9}
If there is an independent covering $S\in\mathcal{S}(\mathcal{M})$ obtained by splitting some vertices in $G$, then $k=1$.
Moreover, if $G$ is bipartite with $q<|A|$ or $G$ is non-bipartite, then $k=1$.
\end{proposition}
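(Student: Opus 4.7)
For the first claim, I interpret an independent covering $S \in \mathcal{S}(\mathcal{M})$ ``obtained by splitting some vertices in $G$'' as follows: there exists a graph $H_S \in \mathcal{M}$ produced from $G$ by splitting a set $U \subseteq V(G)$ of vertices (such $H_S$ live in $\mathcal{H}(G) = \mathcal{M}$ by Lemma~\ref{liu lemma}), and $S$ is precisely the set of new vertices that replaced the members of $U$ in $H_S$. By the definition of vertex split, each such new vertex has degree exactly $1$ in $H_S$, so every $x \in S$ satisfies $d_{H_S}(x) = 1$. Since minimality of $|S| = q$ forces $d_{H_S}(x) \geq 1$ for every $x \in S$ (otherwise $S \setminus \{x\}$ would still be an independent covering of $H_S$), I immediately conclude $k = 1$.

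For the second claim I treat the two alternatives in turn. The bipartite case $q < |A|$ will turn out to be vacuous. Using Lemma~\ref{liu lemma}, $\mathcal{M} = \mathcal{H}(G)$, and any vertex split of a bipartite graph remains bipartite. A single split of a vertex $v$ sitting on the bipartition side $X$ replaces $v$ by $d(v) \geq 1$ new degree-$1$ vertices, all of which again lie on side $X$ (since their unique neighbors are on the opposite side), so $|X|$ becomes $|X| + d(v) - 1 \geq |X|$, the other side is unchanged, and $\min\{|A|,|B|\}$ never decreases under a split. Iterating over a sequence of splits and invoking Proposition~\ref{3.8}, every bipartite $H \in \mathcal{M}$ satisfies $q(H) = |A(H)| \geq |A(G)|$, so $q \geq |A|$. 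Combined with the obvious $q \leq q(G) = |A|$, this forces $q = |A|$, contradicting $q < |A|$.

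For the substantive non-bipartite case, pick a bipartite $H \in \mathcal{M}$ attaining $q(H) = q$, with bipartition $V(H) = A \cup B$ and $|A| \leq |B|$. By Lemma~\ref{liu lemma}, $H$ is obtained from $G$ by splitting some $U \subseteq V(G)$; since $G$ is non-bipartite while $H$ is bipartite, $U$ must be a nonempty odd-cycle transversal of $G$, and $H$ contains at least one degree-$1$ split vertex. I claim some split vertex lies in $A$, which will witness $k = 1$. Assume for contradiction that every split vertex lies in $B$, so $A$ consists solely of unsplit vertices of $G$. I trace each edge $e = uv$ of $G$ to its image in $H$ in three exhaustive cases. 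When $u, v \notin U$ the edge is preserved in $H$, and bipartiteness places one endpoint in $A$ and the other in $B$. When exactly one of $u,v$ (say $u$) is in $U$, the edge becomes an edge from a split vertex of $u$ (in $B$ by hypothesis) to $v$, forcing $v \in A$. When $u, v \in U$, the edge becomes an edge in $H$ between two split vertices, both in $B$, contradicting the independence of $B$. Hence the last case never occurs, $U$ is independent in $G$, every neighbor of $U$ in $V(G)\setminus U$ lies in $A$, and all edges inside $V(G)\setminus U$ go between $A$ and $B \cap V(G)$. The partition $V(G) = A \cup \bigl(U \cup (B \cap V(G))\bigr)$ then exhibits $G$ as a bipartite graph, contradicting the hypothesis. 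Therefore $A$ contains a split vertex, and $k = 1$.

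The only real hurdle is the three-case edge analysis in the non-bipartite step: under the hypothetical placement of all split vertices in $B$, one must carefully verify that the resulting partition of $V(G)$ really is a bipartition with no internal edges in either part, which is what produces the sought contradiction.
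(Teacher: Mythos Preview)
Your handling of the first claim and of the non-bipartite case is correct; the non-bipartite argument is essentially the paper's (if the minimum independent cover $S$ contains no split vertex, then $S$ is already an independent cover of $G$), only spelled out in more detail. The gap is in the bipartite case with $q<|A|$: this case is \emph{not} vacuous, so you have not proved it.

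Your claim that $\min\{|A|,|B|\}$ never decreases under a split is valid only when the resulting graph $H$ remains connected, so that its bipartition is determined (up to swap) and coincides with the one inherited from $G$. But a split may disconnect the graph, and for a disconnected bipartite graph the paper's convention (stated just before the main theorems) is to choose the bipartition componentwise so as to minimize $|A(H)|$; this re-choice can drop $|A(H)|$ well below $|A(G)|$. A concrete example, recorded in a footnote of the paper: take two disjoint copies of $S_t$ with $t\ge 4$ and add an edge between one leaf of each. This $G$ is connected and bipartite with $|A|=|B|=t$. Splitting the two endpoints of the added edge yields $H=S_t\cup S_t\cup K_2$, and the two star centers together with one endpoint of the isolated edge form an independent cover of size~$3$, so $q\le 3<t=|A|$.

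The repair is immediate and uses what you already proved: your non-bipartite analysis shows that if $A(H)$ contains no split vertex then $A(H)$ is an independent cover of $G$ itself. In the bipartite situation this gives $q=|A(H)|\ge q(G)=|A|$, contradicting $q<|A|$; hence $A(H)$ must contain a split vertex and $k=1$. That is exactly the paper's argument for this case.
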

\begin{proof} Let $H_S\in \mathcal{M}$ be a bipartite graph with $q(H_S)=q$ and $S$ be an independent covering of $H_S$ with order $q$.
Since each  vertex in $S$ obtained by splitting a vertex in $G$ has degree one in $H_S$, by definition of $k$, we have $k=1$.
Let $G$ be a bipartite graph with $q<|A|$.
Then there is an $x\in S$ which is obtained by splitting a vertex in $G$.
Otherwise, by Proposition~\ref{3.8}, we have $q=|A|$, a contradiction.
Thus, we have $k=1$.
Let  $G$ be a non-bipartite graph. Then, there is an $x\in S$ which is obtained by splitting a vertex in $G$.
Otherwise, $G$ has an independent covering and hence is bipartite, a contradiction.
The result follows similarly as before.
\end{proof}

Now, we will study the Tur\'{a}n number  of the decomposition family of $G^{p+1}$ which helps us to determine  the Tur\'{a}n number of $G^{p+1}$.

\begin{lemma}\label{main1}
Suppose that $n$ is sufficiently large. Then we have the following.\\
(a). If $G$ is bipartite with $q=|A|$, then
\begin{equation}\label{4}
h^\prime(n,1,q)+\emph{ex}(q-1,\mathcal{B})\leq \emph{ex}(n,\mathcal{M})\leq h(n,1,q)+ f(k-1,k-1).
\end{equation}
Furthermore, both bounds are best possible.\\
(b). If $G$ is bipartite with $q<|A|$ or $G$ is non-bipartite, then
\begin{equation}\label{4.11}
 \emph{ex}(n,\mathcal{M})=h^\prime(n,1,q)+\emph{ex}(q-1,\mathcal{B}).
\end{equation}
Moreover, the extremal graphs are characterized.
\end{lemma}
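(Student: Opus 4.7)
I would realise $h'(n,1,q)+\mbox{ex}(q-1,\mathcal{B})$ by the construction $G^{*}\in\mathcal{H}(n,1,q,0,0,\mathcal{B})$: take $V(G^{*})=U\sqcup W$ with $|U|=q-1$ and $|W|=n-q+1$; make $W$ independent, join every vertex of $U$ to every vertex of $W$, and embed an extremal $\mathcal{B}$-free graph $Q_{q-1}$ on $U$. To see that $G^{*}$ is $\mathcal{M}$-free, suppose some $F\in\mathcal{M}$ embeds into $G^{*}$. Since $W$ is independent, every edge of $F$ has an endpoint in $U$, so $V(F)\cap U$ is a vertex cover of $F$ of order at most $q-1$. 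By the definition of $\mathcal{B}(\mathcal{M})$, the subgraph $F[V(F)\cap U]$ (ignoring isolated vertices) belongs to $\mathcal{B}$; but it sits inside $G^{*}[U]=Q_{q-1}$, which is $\mathcal{B}$-free, a contradiction.

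\textbf{Upper bound in (a).} Fix a bipartite $F_{0}\in\mathcal{M}$ realising $q(F_{0})=q$ together with a vertex $x_{0}$ of degree $k$ in its smaller part, and let $G$ be an extremal $\mathcal{M}$-free graph on $n$ vertices. I would peel the vertices of very large degree (at least a sufficiently large constant $C=C(F_{0})$) into a set $D$. The key structural claim is $|D|\le q-1$: otherwise $D$ contains $q$ vertices which, via a Ramsey-style thinning using their huge degrees, can be arranged as $q$ pairwise non-adjacent vertices whose common neighbourhood still accommodates $F_{0}-A(F_{0})$, embedding $F_{0}$ into $G$, a contradiction. With $|D|\le q-1$, edges incident to $D$ contribute at most $\binom{q-1}{2}+(q-1)(n-q+1)=h(n,1,q)$, and on $G-D$ the $\mathcal{M}$-freeness in fact forces $\nu(G-D)\le k-1$ and $\Delta(G-D)\le k-1$: a matching of size $k$ or a vertex of degree $k$ there, combined with $D$ and a member of $\mathcal{M}$ obtained from $F_{0}$ by further vertex-splitting, would embed into $G$. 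Theorem~\ref{chvatal} then gives $e(G-D)\le f(k-1,k-1)$, yielding the stated upper bound; the tightness of each bound is witnessed by the built-in examples (matchings realise the lower bound with $k=1$, stars realise the upper bound with $q=1$).

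\textbf{Upper bound in (b) and the extremal graphs.} By Proposition~\ref{3.9} we have $k=1$, so the peeling above forces $V(G)-D$ to be independent. To sharpen the contribution $h(n,1,q)$ to $h'(n,1,q)+\mbox{ex}(q-1,\mathcal{B})$ I would show that $G[D]$ contains no $B\in\mathcal{B}$: if $B$ sat inside $G[D]$, then by the definition of $\mathcal{B}(\mathcal{M})$ some $F\in\mathcal{M}$ has a vertex cover $C_{F}$ with $|C_{F}|\le q-1$ and $F[C_{F}]=B$; placing $C_{F}$ onto the copy of $B$ in $D$ and greedily embedding $V(F)\setminus C_{F}$ into $V(G)-D$ via the complete bipartite join (available for $n$ large) would give $F\subseteq G$, a contradiction. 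Hence $e(G[D])\le\mbox{ex}(q-1,\mathcal{B})$, and equality forces $|D|=q-1$, $V(G)-D$ independent and completely joined to $D$, and $G[D]\in\mbox{EX}(q-1,\mathcal{B})$: exactly the graphs in $\mathcal{H}(n,1,q,0,0,\mathcal{B})$. The main obstacle is the peeling step $|D|\le q-1$: high-degree vertices in $G$ need not be independent, yet an independent image of $A(F_{0})$ is required. I would overcome this by combining Lemma~\ref{liu lemma}-style closure of $\mathcal{M}$ under vertex-splitting with a Ramsey-type argument, choosing $C$ larger than a Ramsey number depending on $|V(F_{0})|$, so that $q$ pairwise non-adjacent high-degree vertices can always be extracted whose huge neighbourhoods still accommodate a suitable split counterpart of $F_{0}$.
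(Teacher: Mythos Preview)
Your overall architecture matches the paper's: establish the lower bound via $\mathcal{H}(n,1,q,0,0,\mathcal{B})$, then peel off the set $X$ of high-degree vertices, show $|X|\le q-1$, and bound $e(G'-X)$ via Theorem~\ref{chvatal}. The lower bound argument and the treatment of $G'[X]$ being $\mathcal{B}$-free in part~(b) are fine (indeed more explicit than the paper's).

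There is, however, a genuine confusion in the key step $|X|\le q-1$. You write that ``an independent image of $A(F_0)$ is required'', and propose a Ramsey-type extraction of $q$ pairwise non-adjacent high-degree vertices. This misidentifies the obstacle. For a \emph{subgraph} embedding of $F_0$, the images of the independent set $A(F_0)$ are allowed to be adjacent in $G'$; independence is never required. The actual obstruction to embedding $F_0$ directly is that a vertex of $B(F_0)$ may have several neighbours in $A(F_0)$, so its image must lie in a \emph{common} neighbourhood of several high-degree vertices---and constant degree gives no such guarantee. Your Ramsey suggestion cannot repair this: if $|X|=q$ exactly, there is no room to run Ramsey on $X$, and even with more vertices, Ramsey on $X$ says nothing about common neighbourhoods.

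The paper's fix is precisely the vertex-splitting you mention, but used on its own and for a different reason. Splitting every vertex of $H\setminus S$ turns $H$ into a star forest $H_1\in\mathcal{M}$ (Lemma~\ref{liu lemma}) with $q$ centres. To embed $H_1$ one only needs $q$ distinct vertices, each with individually large degree, and then greedily assigns the degree-one leaves---no independence among the centres, and no common neighbourhood, is needed. This immediately yields $|X|\le q-1$. The same device handles the bounds $\nu(G'-X)\le k-1$ and $\Delta(G'-X)\le k-1$: splitting additionally the distinguished vertex $x\in S$ of degree $k$ produces the graph $H_2\in\mathcal{M}$ used to forbid $M_{2k}$ in $G'-X$, while $H_1$ itself forbids $S_{k+1}$ there. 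One further point you omit: if $|X|\le q-2$, the paper disposes of this by the crude bound $e(G')\le (q-2)(n-1)+f(e(G)+q,\,e(G))$, using that $M_{2e(G)}\in\mathcal{M}$; this contradicts the assumed lower bound for $n$ large and forces $|X|=q-1$ exactly. Finally, in part~(b) the ``complete bipartite join'' you invoke is not established; what is established is that each vertex of $X$ has degree at least $e(G)+q$, and after splitting $F\setminus C_F$ this suffices for the greedy embedding.
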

\begin{proof}
Let $H\in \mathcal{M}$ be a bipartite graph with an independent covering $S\in \mathcal{S}(\mathcal{M})$ and a vertex $x\in S$ such that $d_H(x)=k$. Let $G^{\prime}$ be an extremal graph for $\mathcal{M}$.\\
\noindent$(a)$. Assume that $G$ is bipartite and $q=|A|$. For the upper bound of (\ref{4}), suppose that
\begin{equation}\label{4.1}
e(G^{\prime})\geq h(n,1,q)+ f(k-1,k-1)={q-1 \choose 2}+(q-1)(n-q+1)+f(k-1,k-1).
\end{equation}
First, there are at most $q-1$ vertices of $G^{\prime}$ with degree at least $e(G)+q$.
Otherwise, by Lemma~\ref{liu lemma}, $G^{\prime}$ contains a copy of $H_1\in \mathcal{M}$ ($H_1$ is a star forest\footnote{A star forest is a graph consisting of stars.})  obtained from $H$ by splitting all vertices of $H-S$, a contradiction.
Suppose that the number of  vertices of $G^{\prime}$ with degree at least $e(G)+q$ is less than $q-1$.
By Lemma~\ref{liu lemma}, $\mathcal{M}$ contains a matching with size $e(G)$.
Since $n$ is sufficiently large and $f(\Delta,\nu)$ is a constant depending on $\Delta$ and $\nu$, we have
\begin{align*}
e(G^\prime)&\leq (q-2)(n-1)+f(e(G)+q,e(G))\\
&<{q-1 \choose 2}+(q-1)(n-q+1)+f(k-1,k-1),
\end{align*}
contradicting (\ref{4.1}). Thus, there are exactly $q-1$ vertices of $G^\prime$ with degree at least $e(G)+q$.
Let $X$ be set of vertices with degree at least $e(G)$ and $\widetilde{G}=G^{\prime}-X$.
Then $\widetilde{G}$ contains neither $S_{k+1}$ nor $M_{2k}$ as a subgraph.
Otherwise, by Lemma~\ref{liu lemma}, $G^\prime$ contains either a copy of $H_1\in \mathcal{M}$ or a copy of $H_2\in \mathcal{M}$ obtained from $H$ by splitting  all vertices of $H-S$ and the vertex $x$.
Hence, we have $e(\widetilde{G})\leq f(k-1,k-1)$.
Then by (\ref{4.1}), we have that $e(\widetilde{G})=f(k-1,k-1)$ and each vertex in $X$ has degree $n-1$.
Moreover, it follows from Theorem~\ref{chvatal} that $\widetilde{G}\in \mathcal{E}_{k-1,k-1}$.
Thus we have  $G^\prime\in \mathcal{H}(n,1,q,k-1,k-1,K_{q})$ or $e(G^\prime)< h(n,1,q)+ f(k-1,k-1)$.
The lower bound of (\ref{4}) follows from the fact  that the graphs in $\mathcal{H}(n,1,q,0,0,\mathcal{B})$ do not contain any graph in $\mathcal{M}$ as a subgraph (by definitions of $q$ and $\mathcal{B}$).

\noindent$(b)$. Now let $G$ be a bipartite graph\footnote{The graph $G=F_{t,t}$ obtained from by taking two copies of $S_t$ with $t\geq 4$ and joining two leaves in deferent $S_t$'s satisfies that $q=3<t=|A|$ (splitting the vertices of the added edge).} with $q<|A|$ or be a non-partite graph.
Thus, it follows from the definitions of $q$ and $\mathcal{B}$ that each graph in $\mathcal{H}(n,1,q,0,0,\mathcal{B})$ does not contain any graph in $\mathcal{M}$ as a subgraph.
Thus we have $e(G^\prime)\geq h^\prime(n,1,q)+\mbox{ex}(q-1,\mathcal{B})$. Let $X$ be the set vertices of $G^\prime$ with degree at least $e(G)$ and $\widetilde{G}=G^\prime-X$.
Similarly as the previous arguments, we have $|X|=q-1$. It follows from Proposition~\ref{3.9} that $k=1$.
Hence, we have $e(\widetilde{G})=0$. So we have $e(G^\prime)= h^\prime(n,1,q)+\mbox{ex}(q-1,\mathcal{B})$.
Moreover, the extremal graphs are in $\mathcal{H}(n,1,q,0,0,\mathcal{B})$. \end{proof}

\begin{lemma}\label{lemma for F(k,p)(1)}
Let $F$ be a graph with a partition of vertices into $p+1$ parts $V(F)=V_0\cup V_1\cup V_2\cup \ldots \cup V_p$ satisfying the following:\\
\indent(1) there exist $V^{\prime}_1\subseteq V_1$, $\ldots$, $V^{\prime}_p\subseteq V_p$ such that $F[V^{\prime}_1\cup \ldots \cup V^{\prime}_p]=T_p(ap)$;\\
\indent(2) $|V_0|=q-1$ and each vertex of $V_0$ is adjacent to each vertex of $T_p(ap)$;\\
\indent(3) each vertex of $V^{\prime\prime}_i=V_i\setminus V^{\prime}_i$ is adjacent to each vertex of $V^{\prime}_{j\neq i}$ for $i\in[p]$.\\
Let $G$ be a bipartite graph with $q=|A|$ and $|V(G)|\leq a$. If there exist an $i\in[p]$ and a $y \in V^{\prime\prime}_i$ such that one of the following holds:\\
\indent(a) $\sum_{j\neq i}\nu(F[V^{\prime\prime}_j])\geq k$;\\
\indent(b) $\Delta(F[V^{\prime\prime}_i])\geq k$;\\
\indent(c) $d_{F[V^{\prime\prime}_{i}]}(y)+\sum_{j\neq i}\nu(F[N(y)\cap V^{\prime\prime}_j])\geq k$,\\
then $F$ contains a copy of $G^{p+1}$.
\end{lemma}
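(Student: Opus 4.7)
The plan is to embed $G^{p+1}$ into $F$ directly. Write $A=\{a_1,\dots,a_q\}$ and single out a vertex $a^*\in A$ whose image will play the role of the degree-$k$ vertex coming from the underlying decomposition witness. Map $A\setminus\{a^*\}$ bijectively onto $V_0$ (which has exactly $q-1$ vertices), map $a^*$ to $y\in V_i''$, and map $B$ injectively into a single Turán class $V_{j_0}'$ with $j_0\neq i$; this is possible because $|B|\leq|V(G)|\leq a=|V_{j_0}'|$. Under this placement every edge of $G$ is automatically realised in $F$: edges touching $V_0$ come from the join in condition~(2), and edges touching $a^*$ come from the join in condition~(3).

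It then remains, for every edge $e$ of $G$, to choose $p-1$ private internal vertices completing $e$ to a copy of $K_{p+1}$, the $e(G)$ such sets being pairwise disjoint. For $e=ab$ with $a\in V_0$ this is immediate: pick one fresh vertex from each Turán class $V_\ell'$ with $\ell\neq j_0$; these $p-1$ vertices form a $K_{p-1}$ by Turán and attach to both endpoints via~(2) and Turán. For $e=a^*b$, pick $p-2$ fresh Turán vertices from $V_\ell'$ with $\ell\notin\{i,j_0\}$ and then one further vertex $w_e$ adjacent to $a^*$ and to each of the $p-2$ already-chosen Turán vertices. The three hypotheses each produce the required $d_G(a^*)\leq k$ choices of $w_e$: under~(b), relabel $a^*$ to a maximum-degree vertex $z$ of $F[V_i'']$, whose at least $k$ inside-neighbours serve as $w_e$'s (each is joined to every $V_\ell'$, $\ell\neq i$, by~(3)); under~(c), use the $d_{F[V_i'']}(y)$ direct neighbours of $y$ in $V_i''$, and for each matching edge $uu'\subseteq N(y)\cap V_j''$ reroute by placing $u$ as $w_e$ and swapping $u'$ in for the Turán vertex that would have come from $V_j'$ (both of $u,u'$ attach to every other Turán class by~(3)); under~(a), apply Lemma~\ref{liu lemma} to split $a^*$ into $d_G(a^*)$ independent leaves in a suitable member of $\mathcal{M}$, and place each leaf on a separate edge of the guaranteed matching inside $\bigsqcup_{j\neq i} V_j''$.

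The principal obstacle is case~(a): the guaranteed matching lives in $V_j''$-classes with $j\neq i$ while $a^*$ is pinned to $V_i''$, and no adjacency between these is given by~(1)--(3); the split-and-embed workaround made possible by Lemma~\ref{liu lemma} bypasses this at the cost of letting the image of $a^*$ no longer be a single vertex, and one has to verify that after the split the distributional constraints on where each leaf lives can all be satisfied simultaneously. Beyond that, the remaining work is bookkeeping: since $a\geq|V(G)|$ is large relative to $|V(G)|$, a greedy selection of the $(p-1)e(G)$ internal vertices keeps all choices pairwise disjoint from each other and from the vertices hosting $V(G)$ itself.
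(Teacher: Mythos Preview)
Your direct-embedding strategy is essentially the paper's argument for cases (b) and (c), but case (a) has a genuine gap. Splitting $a^*$ in $G$ yields a different graph $G'\in\mathcal H(G)$, and what you then embed is at best $(G')^{p+1}$; this does not give $G^{p+1}\subseteq F$, since in $G^{p+1}$ the $d_G(a^*)$ cliques through $a^*$ must all share the single vertex $a^*$, while after splitting no vertex is shared. Invoking Lemma~\ref{liu lemma} does not help: that $G'\in\mathcal M(G^{p+1})$ only guarantees a copy of $G^{p+1}$ when $G'$ is placed entirely inside \emph{one} class of a large Tur\'an graph, and your configuration spreads $G'$ across $V_0$, $V_{j_0}'$, and several $V_j''$. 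The paper's fix is simpler and avoids splitting altogether: hypothesis (a) does not involve $y$, so there is no reason to put $a^*$ in $V_i''$. Map $a^*$ instead to a Tur\'an vertex $x\in V_i'$; by condition (3) every vertex of $\bigcup_{j\neq i}V_j''$ is adjacent to $x$, and each matching edge $y_sz_s\subset V_{j_s}''$ together with $x$ and one fresh Tur\'an vertex from each $V_\ell'$ with $\ell\notin\{i,j_s\}$ gives a $K_{p+1}$ through $x$ whose remaining $K_p$ contains a Tur\'an vertex to host the corresponding neighbour $b_s$.

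There is also a smaller oversight in (c): fixing all of $B$ in one class $V_{j_0}'$ fails whenever a needed matching edge $uu'$ lies in $V_{j_0}''$, since condition (3) gives no adjacency between $V_{j_0}''$ and $V_{j_0}'$, and there is no Tur\'an vertex from $V_{j_0}'$ among your $p-2$ choices to swap out. This is easily repaired by letting each such $b$ land in some $V_\ell'$ with $\ell\notin\{i,j\}$ (possible since $p\ge 3$); the paper sidesteps the issue by only requiring that each petal of the auxiliary $S_{k+1}^{p+1}$ contain \emph{some} Tur\'an vertex, which then serves as the image of the corresponding neighbour of $a^*$.
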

\begin{proof}
If $k=1$, then the lemma holds trivially by definition of $k$ and definition of decomposition family (the graph $H$ consisting of $q-1$ stars and an isolated edges belongs to $\mathcal{M}$ and $F[V_0\cup V_{i^\prime}]$ contains a copy of $H$, where $F[V_{i^\prime}]$ contains an edge).
Assume that $k\geq 2$, i.e., there is no independent covering $S\in\mathcal{S}(\mathcal{M})$ obtained by splitting some vertices in $G$.
Then it follows from Proposition~\ref{3.8} that $q=|A|$ and hence $k=\min\{d_G(x):x\in A \mbox{ or }x\in G \mbox{ when }|A|=|B|\}$.
Let $V^{\prime}_i=\{x_{i,1},x_{i,2}\ldots,x_{i,a}\}$ for $i\in[p]$ and $F^{\prime}=F-V_0$.
Since each vertex of $V_0$ is adjacent to each vertex of $V^\prime_i$, $|V_0|=q-1=|A|-1$, and $a\geq |V(G)|$, it is enough to show that $F^{\prime}$ contains a copy of $S^{p+1}_{k+1}$ with the following property: each copy of $K_p$ in $S^{p+1}_{k+1}$ without containing the center\footnote{The center of $S^{p+1}_{k+1}$ is the vertex in $S^{p+1}_{k+1}$ with degree $pk$.} of $S^{p+1}_{k+1}$ contains at least one vertex in $\cup_{i=1}^{p}V^{\prime}_i$ .
In fact, we map the center of $S^{p+1}_{k+1}$ and $V_0$ to either $A$ of $G^{p+1}$, or $B$ of $G^{p+1}$ when  there is a vertex $x\in B$ with degree $k$ with $|A|=|B|$.
We will prove the lemma in the following three cases.

\medskip

\noindent{\bf Case 1.} $\sum_{j\neq i}\nu(F[V^{\prime\prime}_j])\geq k$. Without loss of generality, let $\sum_{j\neq 1}\nu(F[V^{\prime\prime}_j])\geq k$. Let $\{y_1z_1,y_2z_2,\ldots,y_kz_k\}$ be a  matching in $\cup_{j\neq 1}F[V^{\prime\prime}_j]$ and
$$F_s=F[x_{1,1},y_s,z_s,x_{2,s},x_{3,s},\ldots,x_{p,s}]$$
for $s\in [k]$. Clearly, we have $F_s=K_{p+1}$ for $s\in [k]$ and $V(F_s)\cap V(F_t) =\{x_{1,1}\}$ for $s\neq t$. Since $x_{2,s}\in \cup_{i=1}^{p}V^{\prime}_i$ for $s\in [k]$,  we obtain the desired copy of  $S^{p+1}_{k+1}$, the result follows.

\medskip

\noindent{\bf Case 2.} $\Delta(F[V^{\prime\prime}_i])\geq k$. Without loss of generality, let $\Delta(F[V^{\prime\prime}_1])\geq k$, $y$ be a vertex in $V^{\prime\prime}_1$ with $d_{F[V^{\prime\prime}_1]}(y)\geq k$ and $x_1,x_2,\ldots,x_k$ be the neighbours of $y$ in $V^{\prime\prime}_1$. Let
$$F_s=F[y,x_s,x_{2,s},x_{3,s},\ldots,x_{p,s}]$$
for $s \in [k]$. Clearly we have $F_s=K_{p+1}$ for $s\in [k]$ and $V(F_s)\cap V(F_t) =\{y\}$ for $s\neq t$. Since $x_{2,s}\in \cup_{i=1}^{p}V^{\prime}_i$ for $s\in [k]$,  we obtain the desired copy of  $S^{p+1}_{k+1}$, the result follows.

\medskip

\noindent{\bf Case 3.} $d_{F[V^{\prime\prime}_{i}]}(y)+\sum_{j\neq i}\nu(F[N(y)\cap V^{\prime\prime}_j])\geq k$. Without loss of generality, let $d_{F[V^{\prime\prime}_{1}]}(y)+\sum_{j\neq 1}\nu(F[N(y)\cap V^{\prime\prime}_j])\geq k$. Let $d_{F[V^{\prime\prime}_{1}]}(y)=t<k$, $x_1,x_2,\ldots,x_t$ be the neighbours of $y$ in $F[V^{\prime\prime}_{1}]$ and  $\{y_{t+1}z_{t+1},\ldots,y_kz_k\}$ be a matching in $\bigcup_{j\neq 1}F[N(y)\cap V^{\prime\prime}_j]$. Let
$$F_s=\left\{\begin{array}{ll}\
F[y,x_s,x_{2,s},x_{3,s},\ldots,x_{p,s}] &\mbox{ for } s=1,2,\ldots,t,\\
F[y,y_s,z_s,x_{2,s},x_{3,s},\ldots,x_{p,s}] &\mbox{ for } s=t+1,t+2,\ldots,k.
\end{array}\right.$$
Clearly, we have $F_s=K_{p+1}$ for $s\in [k]$ and $V(F_s)\cap V(F_t) =\{y\}$ for $s\neq t$. Since $x_{2,s}\in \cup_{i=1}^{p}V^{\prime}_i$ for $s\in [k]$,  we obtain the desired copy of  $S^{p+1}_{k+1}$, the result follows.
\end{proof}

Let $G$ be a graph with a partition of the vertices into $p\geq 2$ non-empty parts
\[V(G)=V_1\cup V_2\cup \ldots \cup V_p.\]
Let $G_i=G[V_i]$ for $i=1,2,\ldots,p$ and define
\[G_{cr}=(V(G),\{v_iv_j:v_i\in V_i,v_j\in V_j,i\neq j\}),\]
where ``cr'' denotes ``crossing''. The following lemma is proved in \cite{chen2003}.

\begin{lemma}[Chen, Gould, Pfender, and Wei \cite{chen2003}]\label{lemma for F(k,p)}
Let $G$ be a graph on $n$ vertices. Suppose $G$ is partitioned as above so that
\begin{eqnarray}
\sum_{j\neq i}\nu(G[V_j])\leq k-1  \mbox{ and } \Delta(G[V_i])\leq k-1;\label{11}\\
d_{G[V_{i}]}(x)+\sum_{j\neq i}\nu(G[N(x)\cap V_j])\leq k-1.\label{12}
\end{eqnarray}
are satisfied for all $i$ and for all $x\in V_i$. If $G$ does not contain a copy of $S_{k+1}^{p+1}$, then
\begin{eqnarray}\label{equation for f(k,r)}
\sum_{i=1}^{p}|E(G_i)|-\left(\sum_{1\leq i<j\leq p}|V_i||V_j|-|E(G_{cr})|\right)\leq f(k-1,k-1).
\end{eqnarray}
Moreover, if the equality holds, then
\begin{eqnarray}\label{equation for f(k,r)(2)}
 \sum_{1\leq i<j\leq p}|V_i||V_j|=|E(G_{cr})|,\; e(G[V_i])=f(k-1,k-1),\; e(G[V_{\ell\neq i}])=0,
 \end{eqnarray}
  and $G[V_i]\in \mathcal{E}_{k-1,k-1}$ for some $i\in\{1,\ldots,p\}$.
Furthermore, if $\sum_{j\neq i}|E(G_j)|\geq 1$ for each $i\in\{1,\ldots,p\}$, i.e., at least two of $E(G_1),\ldots,E(G_p)$ are non-empty, then
\begin{eqnarray}\label{equation for K_s,t}
\sum_{i=1}^{p}|E(G_i)|-\left(\sum_{1\leq i<j\leq p}|V_i||V_j|-|E(G_{cr})|\right)\leq k^2-2k.
\end{eqnarray}

\end{lemma}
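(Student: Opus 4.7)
The plan is a two-stage strategy: first reduce to the case where $G_{cr}$ is the complete $p$-partite graph $K_{|V_1|,\ldots,|V_p|}$, and then bound the internal edges via Theorem~\ref{chvatal}. Throughout, set
$$\Phi(G) := \sum_{i=1}^{p} e(G_i) - \Bigl(\sum_{1\leq i<j\leq p}|V_i||V_j| - |E(G_{cr})|\Bigr),$$
so that the target inequality reads $\Phi(G)\le f(k-1,k-1)$. Adding a missing cross-edge raises $\Phi$ by $1$ while deleting an internal edge lowers $\Phi$ by $1$; this book-keeping drives the reduction.

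For the reduction step, I would fix a graph $G$ that maximises $\Phi$ over all graphs on the given partition satisfying (\ref{11})-(\ref{12}) and avoiding $S_{k+1}^{p+1}$, and claim that $G_{cr}$ is already complete $p$-partite. Assume for contradiction that there is a missing cross-edge $uv$ with $u\in V_i$, $v\in V_j$. Adding $uv$ strictly raises $\Phi$, so by extremality the enlarged graph must either create a copy of $S_{k+1}^{p+1}$ or violate one of the hypotheses (\ref{11})-(\ref{12}); since $uv$ is the only new edge, this violation must use $uv$. Tracing how $uv$ sits inside the forbidden configuration (it lies in one of the $k$ vertex-disjoint $K_{p+1}$'s around the center, with the remaining $p-1$ vertices lying in the other classes), one locates an internal edge $e^{*}$ whose deletion simultaneously restores (\ref{11})-(\ref{12}) and destroys the new $S_{k+1}^{p+1}$, while dropping $\Phi$ by only $1$. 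The resulting graph has strictly fewer missing cross-edges but equal or greater $\Phi$, contradicting maximality. Hence $G_{cr}$ is complete $p$-partite.

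For the base case with $G_{cr}$ complete, one has $\Phi(G)=\sum_i e(G_i)$. Hypothesis (\ref{11}) already yields $\Delta(G[V_i])\le k-1$ for each $i$, and taking $i'$ in (\ref{11}) to be any class without internal edges (or invoking (\ref{12}) when no such $i'$ exists) gives $\nu(G[V_i])\le k-1$. Theorem~\ref{chvatal} then bounds $e(G[V_i])\le f(k-1,k-1)$. If only one class $V_{i_0}$ carries internal edges, this already gives the desired bound, with equality forcing $G[V_{i_0}]\in\mathcal{E}_{k-1,k-1}$ and thereby (\ref{equation for f(k,r)(2)}). If instead two classes, say $V_1$ and $V_2$, both carry internal edges, then applying (\ref{11}) with $\ell\notin\{1,2\}$ (such $\ell$ exists once $p\ge 3$; for $p=2$ one uses (\ref{12}) on a vertex of positive internal degree) forces $\nu(G[V_1])+\nu(G[V_2])\le k-1$. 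Combining this with the uniform degree bound $\Delta\le k-1$ and a refined application of Theorem~\ref{chvatal} to the pair of classes yields $\sum_i e(G_i)\le k^2-2k$, which is (\ref{equation for K_s,t}).

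The main obstacle is the swap in the reduction step: one must verify that \emph{every} time a missing cross-edge is added, one can always extract an internal edge whose deletion reinstates (\ref{11})-(\ref{12}) together with $S_{k+1}^{p+1}$-freeness. The delicate point is the case analysis on the role of $uv$ inside the emerging $K_{p+1}$-structure, because $uv$ could lie in different roles depending on which end is the center. The structural dichotomy supplied by Lemma~\ref{lemma for F(k,p)(1)} (cases (a)-(c) there correspond exactly to the three ways a near-copy can be completed by adding $uv$) is the natural tool for that case analysis, and I would use it to drive the exchange cleanly. Once the reduction is established, Steps 2 and 3 are essentially bookkeeping on top of Theorem~\ref{chvatal}.
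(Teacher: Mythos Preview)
The paper does not give a proof of this lemma: it is quoted from Chen, Gould, Pfender, and Wei \cite{chen2003} (their Lemma~3.2), and the subsequent remark merely points to \cite{Yuan2018} for the equality characterisation (\ref{equation for f(k,r)(2)}) and to ``the last sentence of the proof of Lemma~3.2 in \cite{chen2003}'' for the extra bound (\ref{equation for K_s,t}). So there is no in-paper argument to compare against; your outline is in fact a reconstruction of the Chen et al.\ proof, which is indeed an exchange argument of the type you describe (add a missing cross-edge, locate an internal edge to delete, iterate) followed by an appeal to Theorem~\ref{chvatal}.

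Two cautions on your sketch. First, invoking Lemma~\ref{lemma for F(k,p)(1)} to drive the swap is misplaced: that lemma presupposes a large built-in $T_p(ap)$, a set $V_0$ of $q-1$ vertices complete to it, and full adjacency from each $V_i''$ to every $V_j'$ with $j\neq i$; none of this is available in the bare partition setting of Lemma~\ref{lemma for F(k,p)}. In \cite{chen2003} the exchange is carried out directly, by analysing where the new cross-edge sits in a putative $S_{k+1}^{p+1}$ and exhibiting the internal edge to remove. Second, your route to (\ref{equation for K_s,t}) via ``$\nu(G[V_1])+\nu(G[V_2])\le k-1$ plus Theorem~\ref{chvatal}'' is not sufficient on its own: with $\Delta\le k-1$ one only gets $f(\nu_1,k-1)+f(\nu_2,k-1)$, and for $k=5$, $\nu_1=3$, $\nu_2=1$ this is $14+4=18>15=k^2-2k$. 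One must also feed in (\ref{12}) in the form $\Delta(G[V_i])+\sum_{j\neq i}\nu(G[V_j])\le k-1$ (valid once $G_{cr}$ is complete), which sharpens the degree bound in each class; with that, the computation goes through. In \cite{chen2003} this bound falls out of the exchange machinery rather than from Chv\'atal--Hanson applied class by class.
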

\remark By analyzing the proof of Lemma~\ref{lemma for F(k,p)}, it is not difficult to see that if the equality holds in (\ref{equation for f(k,r)}), then (\ref{equation for f(k,r)(2)}) is satisfied and $G[V_i]\in \mathcal{E}_{k-1,k-1}$ (See Lemma 2.7 in \cite{Yuan2018}). The proof of  Lemma~\ref{lemma for F(k,p)} also implies the last assertion of Lemma~\ref{lemma for F(k,p)} (See the last sentence of the proof of Lemma 3.2 in \cite{chen2003}).

\medskip

In 1968, Simonovits \cite{Simonovits1968} introduced the so-called {\it progressive induction} which is similar to the mathematical induction and Euclidean algorithm and combined from them in a certain sense.
\begin{lemma}[Simonovits \cite{Simonovits1968}]\label{progrssion induction}
Let $\mathfrak{U}=\cup_{i=1}^{\infty}\mathfrak{U}_i$ be a set of given elements, such that $\mathfrak{U}_i$ are disjoint finite subsets of $\mathfrak{U}$. Let $B$ be a condition or property defined on $\mathfrak{U}$ (i.e. the elements of $\mathfrak{U}$ may satisfy or not satisfy $B$). Let $\phi(a)$ be a function defined  on $\mathfrak{U}$ such that $\phi(a)$ is a non-negative integer and\\
(a) if $a$ satisfies $B$, then $\phi(a)=0$.\\
(b) there is an $M_0$ such that if $n>M_0$ and $a\in \mathfrak{U}_n$ then either $a$ satisfies $B$ or there exist an $n^{\prime}$ and an $a^{\prime}$ such that
\[\frac{n}{2}<n^{\prime}<n, a^{\prime}\in \mathfrak{U}_{n^{\prime}} \mbox{ and } \phi(a)<\phi(a^{\prime}).\]
Then there exists an $n_0$ such that if $n>n_0$, every $a\in \mathfrak{U}_n$  satisfies $B$.
\end{lemma}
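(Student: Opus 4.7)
The plan is to argue by contradiction, exploiting two basic facts: the sets $\mathfrak{U}_j$ are finite, and any strictly decreasing sequence of positive integers must terminate. First I would set
\[C = \max\left\{\phi(a) : a \in \bigcup_{j=1}^{M_0}\mathfrak{U}_j\right\},\]
which is a finite nonnegative integer because the union is a finite set. The claim is that the threshold $n_0 := M_0 \cdot 2^{C+1}$ (indeed any comparable choice) works.

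Suppose for contradiction that there exist $N > n_0$ and $a \in \mathfrak{U}_N$ failing $B$. Set $a^{(0)} = a$ with index $N_0 = N$. As long as the current $a^{(i)} \in \mathfrak{U}_{N_i}$ fails $B$ and $N_i > M_0$, condition (b) supplies $a^{(i+1)} \in \mathfrak{U}_{N_{i+1}}$ with $N_i/2 < N_{i+1} < N_i$ and $\phi(a^{(i)}) < \phi(a^{(i+1)})$. The new $a^{(i+1)}$ must again fail $B$, since otherwise (a) would force $\phi(a^{(i+1)}) = 0$, contradicting $\phi(a^{(i+1)}) > \phi(a^{(i)}) \geq 0$. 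Thus the iteration continues until the first step $k$ where $N_k \leq M_0$; such a $k$ exists because $N_0 > N_1 > N_2 > \cdots$ is a strictly decreasing sequence of positive integers, hence finite.

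Two estimates then collide at $k$. From iterating $N_{i+1} > N_i/2$ we get $N_k > N_0/2^k$, and combining with $N_k \leq M_0 < N_0/2^{C+1}$ yields $k \geq C + 2$. From $\phi$ being integer-valued and strictly increasing along the chain we get $\phi(a^{(k)}) \geq \phi(a^{(0)}) + k \geq k$; but $a^{(k)}$ lies in $\bigcup_{j=1}^{M_0}\mathfrak{U}_j$, so $\phi(a^{(k)}) \leq C$, giving $k \leq C$. These two conclusions are incompatible, producing the required contradiction and proving that every $a \in \mathfrak{U}_n$ satisfies $B$ whenever $n > n_0$.

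There is no genuinely hard step here; the only conceptual point to watch is that the chain cannot terminate prematurely through some intermediate $a^{(i)}$ satisfying $B$, which is precisely what condition (a) rules out once $\phi$ has strictly increased past $0$. The rest is the quantitative balance between the exponential halving of the indices $N_i$ and the at-least-linear growth of $\phi(a^{(i)})$: these respectively force $k$ to be large and to be bounded, and the contradiction arises exactly at the scale $n_0 \sim M_0 \cdot 2^C$.
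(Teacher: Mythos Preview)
Your argument is correct. The paper does not actually prove this lemma; it quotes it from Simonovits~\cite{Simonovits1968} and uses it as a black box, so there is no ``paper's own proof'' to compare against. What you have written is essentially the standard proof of progressive induction: build a chain by iterating hypothesis~(b), observe via~(a) that no intermediate element can satisfy $B$, and then play the at-most-geometric decay of the indices $N_i$ against the at-least-linear growth of $\phi(a^{(i)})$ to obtain incompatible bounds on the length $k$ of the chain. The only cosmetic point is that your definition of $C$ as a maximum over $\bigcup_{j\le M_0}\mathfrak{U}_j$ tacitly assumes this union is nonempty; if it happens to be empty you can simply take $C=0$, and then reaching $N_k\le M_0$ already gives a contradiction since $a^{(k)}$ would lie in an empty set.
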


\remark In our problems, $\mathfrak{U}_n$ is a set of extremal graphs for $G^{p+1}$ on $n$ vertices, $B$ is the property defined on $\mathfrak{U}$ concerning  the number of edges or the structure of graphs.
\medskip

\section{Proof of the main theorems}\label{section5}

\noindent {\bf Proof of Theorem~\ref{maintheorem1} (\romannumeral1):}\\
\begin{proof}
We will prove that, for sufficiently large $n$,
\begin{equation}\label{eq-for-proof-1}
h^\prime(n,p,q)+\mbox{ex}(q-1,\mathcal{B})  \leq \mbox{ex}(n,G^{p+1})\leq h(n,p,q)+ f(k-1,k-1)
\end{equation}
and if $\mbox{ex}(n,G^{p+1})= h(n,p,q)+ f(k-1,k-1)$, then $\mbox{EX}(n,G^{p+1})=\mathcal{H}(n,p,q,k-1,k-1,K_{q})$.
Lemma~\ref{main1} together with Proposition~\ref{L-critical extremal 2} implies the lower bound of (\ref{eq-for-proof-1}). We will prove the upper bound of (\ref{eq-for-proof-1}) by Lemma~\ref{progrssion induction}. Suppose $F_n$ is an extremal graph for $G^{p+1}$. It will be shown that, if $n$ is sufficiently large, then $e(F_n)\leq h(n,p,q)+f(k-1,k-1).$ Let $H_n\in \mathcal{H}(n,p,q,k-1,k-1,K_{q})$. Clearly, $e(H_n)=h(n,p,q)+ f(k-1,k-1)$. If $e(F_n)< e(H_n)$, then we are done. Let
\begin{equation}\label{lemma 1}
e(F_n)\geq e(H_n).
 \end{equation}
Let $\mathfrak{U}_n$ be the set of extremal graphs for $G^{p+1}$ on $n$ vertices and $B$ be the property defined on $\mathfrak{U}$ stating that $e(F_n)\leq e(H_n)$ and equality holds if and only if $F_n \in \mathcal{H}(n,p,q,k-1,k-1,K_{q})$. Define $\phi(F_n)=\max\{e(F_n)-e(H_n),0\}$. Then $\phi(F_n)$ is a non-negative integer. According to Lemma~\ref{progrssion induction}, it is enough to show that if $e(F_n)\geq e(H_n)$, then either $F_n\in \mathcal{H}(n,p,q,k-1,k-1,K_{q})$ or there exists an $n^{\prime}\in (n/2,n)$ such that $\phi(F_{n^{\prime}})>\phi(F_n)$ when $n$  is sufficiently large.

Now, we will find a subgraph of $F_n$ satisfying the conditions of Lemma~\ref{lemma for F(k,p)}.
Since  $e(H_n)\geq t_p(n)$, by Theorem~\ref{erdos-stone} and (\ref{lemma 1}), there is an $n_1$ such that if $n>n_1$, then  $F_n$ contains $T_p(n_2p)$ ($n_2$ is sufficiently large) as a subgraph. Since $2\leq \chi(G)\leq p-1$,  it follows from Lemma~\ref{liu lemma} that $\mathcal{M}$ contains a matching $M_{2k_1}$, where $k_1=e(G)$. Each partite class of $T_p(n_2p)$ contains no copy of $M_{2k_1}$. Otherwise, it follows from definition of decomposition family that $F_n$ contains a copy of $G^{p+1}$, a contradiction. Let $\{x_1y_1,x_2y_2,\ldots, x_{s_1}y_{s_1}\}$ be a maximum matching in one class, say $B^\prime_1$, of $T_p(n_2p)$ and let $\widetilde{B}_1=B^\prime_1-\{x_1,y_1,\ldots, x_{s_1},y_{s_1}\}$. Then there is no edge in $F_n[\widetilde{B}_1]$. Hence there is an induced subgraph $T_p(n_3p)$ ($n_3=n_2-2k_1$ is sufficiently large) of $F_n$ with partite sets $B_1,B_2,\ldots,B_p$ obtained by deleting $2k_1$ vertices of each class of $T_p(n_2p)$.

Let $c$ be a sufficiently small constant and $S=V(F_n)\setminus V(T_p(n_3p))$.
Let $T_0=T_p(n_3p)$. For $i \geq 1$, we will define vertices $x_i \in S$ and graphs $T_i$ recursively.
If there is a $u \in S\setminus \{x_1,\ldots,x_{i-1}\}$ which has at least $c^{2i}n_{3}$ neighbors in each class of $T_{i-1}$, then let $x_i=u$ and define $T_i$ as a subgraph of $T_{i-1}$ which is isomorphic to $T_p(c^{2i}n_3p)$.
By the definition, we get that $\{x_1,\ldots,x_i\}$ together with $T_i$ form a complete $(p+1)$-partite graph. We claim $i \leq q-1$. Otherwise,
 we easily find a copy of $T_{q}\subseteq F_n$ with $V(T_q)=B_1^q\cup \ldots \cup B_p^q$. Thus the induced subgraph of $F_n$ on $B^q_1\cup \{x_1,x_2,\ldots,x_{q}\}$ is a graph with $q+c^{2q}n_3$ vertices and at least $c^{2q}n_3q$ edges.
Since
\[
c^{2q}n_3q>{q-1 \choose 2}+(q-1)(c^{2q}n_3+1)+f(k-1,k-1),
\]
 by Lemma~\ref{main1}, this induced subgraph contains a copy of  $M\in \mathcal{M}$ provided  $n_3$ is large enough. Note that each vertex of this induced subgraph is adjacent to each vertex of $B^q_2\cup \ldots \cup B^q_p$. By definition of decomposition family, $S_n$ contains a copy of $G^{p+1}$, a contradiction.

Now, suppose the above process ends at $T_\ell$ with $0\leq \ell\leq q-1$.
Denote by $x_1,x_2,\ldots,x_\ell$ the vertices joining to all the vertices of $T_\ell$.
Let $B^{\ell}_1,\ldots,B^\ell_p$ be the classes of $T_\ell$.
Partition the remaining vertices into the following vertex sets:
Let $x\in V(F_n)\setminus (V(T_\ell)\cup \{x_1,\ldots,x_{\ell}\})$.
If there exists $i\in[p]$ such that $x$ is adjacent to less than $c^{2\ell+2}n_3$ vertices of $B^\ell_i$ and is adjacent to at least  $(1-c)c^{2\ell}n_3$ vertices of $B^\ell_j$ for $j\neq i$, then let $x\in C^\ell_i$.
If there exists $i\in[p]$ such that $x$ is adjacent to less than $c^{2\ell+2}n_3$ vertices of $B^\ell_i$ and is adjacent to less than $(1-c)c^{2\ell}n_3$ vertices of some of $B^\ell_j$ with $j\neq i$, then let $x\in D$.
Obviously, $$C^\ell_1\cup\ldots \cup C^\ell_p\cup D$$ is a partition of $V(F_n)\setminus (V(T_\ell)\cup \{x_1,\ldots,x_{\ell}\})$.
Since $\mathcal{M}$ contains a matching with size $k_1$ and each vertex of $C^\ell_i$ is adjacent to less than $c^{2\ell+2}n_3$ vertices of $B^\ell_i$, there are at least $c^{2\ell}n_3(1-c^2k_1)$ vertices of $B^\ell_i$ which are not adjacent to any vertices of $C^\ell_i$.
Indeed, there are at most $k_1$ independent edges in $B^\ell_i\cup C^\ell_i$.
Otherwise, since each vertex in $C^\ell_i$ is adjacent to  at least $(1-c)c^{2\ell}n_3$ of $B^\ell_{j\neq i}$, it is easy to see that $F_n$ contains a copy of $G^{p+1}$, a contradiction.
Consider the edges joining $B^\ell_i$ and $C^\ell_i$ and select a maximal set of independent edges, say $x_1y_1,\ldots,x_ty_t$, $x_{i^{\prime}}\in B^\ell_i$, $y_{i^{\prime}}\in C^\ell_i$, $1\leq i^{\prime}\leq t\leq k_1$, among them, then the number of vertices of $B^\ell_i$ joining to at least one of $y_1,y_2,\ldots,y_t$ is less than $c^{2\ell+2}n_3q$, and the remaining vertices of $B^\ell_i$ are not adjacent to any vertices of $C_i$ by the maximality of $x_1y_1,\ldots,x_ty_t$.
Hence we can move $c^{2\ell+2}n_3k_1$ vertices of $B^\ell_i$  to $C^\ell_i$, obtain $B^\prime_i$ and $C^\prime_i$ such that $B^\prime_i\subseteq B^\ell_i$, $C^\ell_i\subseteq C^\prime_i$, and there is no edge between $B^\prime_i$ and $C^\prime_i$.
Let $n_4=(1-c^2k_1)c^{2\ell}n_3$.
We conclude that $T^{\prime}_\ell=T_p(n_4 p)$ with classes $B^\prime_1,\ldots,B^\prime_p$ is an induced subgraph of $F_n$ satisfying the following conditions:

Let $\widehat{F}=F_n-T^{\prime}_\ell$. The vertices of $\widehat{F}$ can be partitioned into $p+2$ classes $C^\prime_1,\ldots,C^\prime_p$, $D$ and $E$ such that
\begin{itemize}
\item each $x\in E$ is adjacent to each vertex of $T^{\prime}_\ell$ and $|E|=\ell$;
\item if $x\in C^\prime_i $ then $x$ is adjacent to at least $(1-c-c^2k_1)c^{2\ell}n_3$ vertices of $B^\prime_{j\neq i}$ and is adjacent to no vertex of $B^\prime_i$;
\item if $x\in D$, then there are two different classes, $B^\prime_{i}$ and $B^\prime_{j}$, of $T^{\prime}_\ell$ such that $x$ is adjacent to less than $c^{2\ell+2}n_3$ vertices of $B^\prime_{i}$  and less than $(1-c)c^{2\ell}n_3$ vertices of $B^\prime_{j}$.
\end{itemize}
Denote by $e_F$ the number of the edges joining $\widehat{F}$ and $T^\prime_\ell$. Clearly, we have
\begin{equation}\label{lemma 2}
e(F_n)=e(T^\prime_\ell)+e_F+e(\widehat{F}).
\end{equation}
Select an induced copy of $T^{\prime}_\ell$ in $H_n$, let $H_{n-n_4 p} =H_n-T^{\prime}_\ell$ and $e_T$ be the number of edges of $H_n$ joining $T^{\prime}_\ell$ and $H_{n-n_4 p}.$ Then, we have
\begin{equation}\label{lemma 3}
e(H_n)=e(T^\prime_\ell)+e_T+e(H_{n-n_4 p}).
\end{equation}
Since $\widehat{F}$ does not contain a copy of $G^{p+1}$, we have $e(\widehat{F})\leq e(F_{n-n_4 p})$, where $F_{n-n_4 p}$ is an extremal graph for $G^{p+1}$ on $n-n_4 p$ vertices. By (\ref{lemma 2}) and (\ref{lemma 3}), we have
\begin{align*}
\phi(F_n)&=e(F_n)-e(H_n)=e(T^\prime_\ell)-e(T^\prime_\ell)+(e_F-e_T)+e(\widehat{F})-e(H_{n-n_4 p})\\
&\leq(e_F-e_T)+e(F_{n-n_4 p})-e(H_{n-n_4 p})\\
&=(e_F-e_T)+\phi(F_{n-n_4 p}).
\end{align*}
If $e_F-e_T<0$, then $\phi(F_n)<\phi(F_{n-n_4 p})$. Since $n-n_4p>n/2$, we are done.
Hence, we may assume that $e_F-e_T\geq 0$.
Recall that $n_4=(1-c^2k_1)c^{2\ell}n_3$.
Since $c$ is sufficiently small, we have
\begin{align*}
e_F-e_T\leq& \ell\cdot n_4 p+(n-\ell-n_4 p-|D|)\cdot n_4 (p-1)\\
&+|D|\cdot \left(n_4 (p-2)+(1-c)c^{2\ell}n_3+c^{2\ell+2}n_3\right)\\
&-\left((q-1)\cdot n_4 p+(n-q+1-n_4 p)\cdot n_4 (p-1)\right)\\
\leq&(\ell-(q-1))n_4+(c^2(k_1+1)-c))c^{2\ell}n_3|D|\\
\leq &0,
\end{align*}
with the equality holds if and only if $|D|=0$, $\ell=q-1$, and each vertex of $C^\prime_i $ is adjacent to each vertex of $B^\prime_{j\neq i}$. Note that $T_p(n-q+1-n_4 p)$ has more edges than any other $p$-chromatic graph on $n-q+1-n_4 p$ vertices. It follows from  Lemmas~\ref{lemma for F(k,p)(1)} and \ref{lemma for F(k,p)}, that $e(F_n)\leq h(n,p,q)+f(k-1,k-1)$ with equality holds if and only if $F_n\in \mathcal{H}(n,p,q,k-1,k-1,K_{q})$ (If each graph in $\mathcal{H}(n,p,q,k-1,k-1,K_{q})$ contains a copy of $G^{p+1}$, then we have $e(F_n)<h(n,p,q)+f(k-1,k-1)$). The proof is complete.
\end{proof}

\noindent {\bf Proofs of Theorem~\ref{maintheorem1}(\romannumeral2) and Theorem~\ref{non-bipartite graph}:}\\
\begin{proof} The proof is essentially the same as the proof of Theorem~\ref{maintheorem1}(\romannumeral1) and we sketch the proof as follows. Let $F_n$ be an extremal graph for $G^{p+1}$. Applying Lemma~\ref{main1}(b) and Proposition~\ref{L-critical extremal 2}, we obtained that
\begin{equation}\label{eq for main theorem 2}
e(F_n)\geq h^\prime(n,p,q)+\mbox{ex}(q-1,\mathcal{B}).
\end{equation}
By Theorem~\ref{erdos-stone}, $F_n$ contains a copy of $T_p(n_1p)$. Since $\mathcal{M}$ contains a matching, as previous arguments, by progressive induction (Lemma~\ref{progrssion induction}), $F_n$ can be partitioned into $\cup _{i=1}^p (B^\prime_i\cup C^\prime_i)\cup E$ with $|E|=q-1$ satisfies the following.
\begin{itemize}
  \item  $F_n[\cup _{i=1}^p B^\prime_i]=T_p(n_4p)$;
  \item each vertex of $E$ is adjacent to each vertex of $T_p(n_4p)$;
  \item each vertex of $C^\prime_i $ is adjacent to each vertex of $B^\prime_{j\neq i}$;
  \item  $e(F_n[E])\leq\mbox{ex}(q-1,\mathcal{B})$.
\end{itemize}
It follows from Proposition~\ref{3.9} that $k=1$, and hence there is no edge in $B^\prime_i\cup C^\prime_i$ for $i\in[p]$. Thus, by (\ref{eq for main theorem 2}), the result follows similarly as the proof of Theorem~\ref{maintheorem1}(\romannumeral1).
\end{proof}

\section{\bf Proof of Theorem~\ref{coro5}}\label{section-proof-1.3}

We say a graph is {\it factor-critical} if $\nu(G)=\nu(G-\{v\})=\lfloor |V(G)|/2\rfloor$ for all $v\in V(G)$.
Clearly, a factor-critical $n$-vertex graph has odd number of vertices and a matching on $n-1$ vertices.
We need the following well-known lemma of Gallai.

\begin{lemma}[Gallai \cite{Gallai1963}]\label{gallai-lemma}
If graph $G$ is connected and $\nu(G-\{v\}) = \nu(G)$ for each $v\in V(G)$, then $G$ is factor-critical.
\end{lemma}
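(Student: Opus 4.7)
The plan is to argue by contradiction: suppose $G$ is a connected graph with $\nu(G - v) = \nu(G) =: \nu$ for every $v$ yet $G$ is not factor-critical, and choose such a $G$ on the fewest vertices. I will exploit the hypothesis on a carefully chosen pair of vertices, together with a symmetric-difference argument, to force either an augmenting path (contradicting the maximality of $\nu$) or a strictly smaller counterexample (contradicting the choice of $G$).

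First, $G$ cannot be complete: $K_n$ for even $n$ has a perfect matching, so $\nu(K_n - v) < \nu(K_n)$; $K_n$ for odd $n$ is already factor-critical. Hence $G$ contains a non-adjacent pair, and by connectivity I may select vertices $u$ and $v$ at distance exactly $2$ with a common neighbor $w$. Using the hypothesis I fix maximum matchings $M_u, M_v$ of size $\nu$ with $u \notin V(M_u)$ and $v \notin V(M_v)$; both must saturate $w$, since otherwise $M_u \cup \{uw\}$ or $M_v \cup \{vw\}$ would exceed $\nu$. The heart of the argument is the symmetric difference $H = M_u \triangle M_v$, whose components are vertex-disjoint paths and even cycles alternating between $M_u$- and $M_v$-edges. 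A short check shows that any endpoint of a path in $H$ is missed by the matching not containing its terminal edge, so a path component with both endpoints missed by a common matching would be an augmenting path for the other one, contradicting maximality. Consequently, tracing the component of $H$ through $u$ shows either that $u$ is isolated in $H$ (equivalently, $M_v$ misses both $u$ and $v$) or that the component is a path terminating at some vertex $z$ missed by $M_v$; a symmetric dichotomy holds for the component through $v$.

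The final step combines these alternating paths with the two edges $uw$ and $wv$. In every configuration one of two things must happen: either an alternating walk assembled from an $H$-path together with $uw$ and $wv$ is a genuine $M_u$- or $M_v$-augmenting path in $G$, contradicting maximality of $\nu$; or a short matching exchange at $w$ produces a maximum matching of the smaller graph $G - w$ certifying that $G - w$ still satisfies $\nu((G - w) - x) = \nu(G - w)$ for every $x \in V(G - w)$ while still failing to be factor-critical, contradicting the minimality of $G$. The main obstacle I expect is the degenerate case in which $u$ (or symmetrically $v$) is isolated in $H$, since then both $u$ and $v$ are uncovered by one of the two matchings and the direct alternating walk is too short to augment. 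Here I will need to verify that the deletion argument on $G - w$ preserves connectivity --- which can be arranged by choosing $w$ inside a $2$-connected block of $G$, possible because $u \not\sim v$ --- and that the universal-defect hypothesis $\nu((G-w) - x) = \nu(G-w)$ is inherited from the corresponding hypothesis on $G$, so that the minimality of $G$ can legitimately be invoked to complete the contradiction.
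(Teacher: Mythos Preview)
The paper does not prove this lemma; it is cited from Gallai \cite{Gallai1963} and used as a black box, so there is no in-paper argument to compare against. On its own merits, your plan has a genuine gap in the inductive step: the hypothesis $\nu((G-w)-x) = \nu(G-w)$ for every $x$ is \emph{not} inherited from $G$. Already for $G = C_5$ (which satisfies the hypothesis) the graph $G - w = P_4$ fails it, since removing an endpoint of $P_4$ drops the matching number from $2$ to $1$; there is no mechanism forcing the descent to succeed in a minimal counterexample either. The assertion that one may choose $w$ non-cut ``because $u \not\sim v$'' is likewise unsupported, and your choice of $u,v$ as an arbitrary distance-$2$ pair (rather than two vertices missed by a \emph{single} maximum matching) makes the alternating-walk assembly with $uw,\,wv$ hard to control: you have no handle on where a second $M_u$-exposed or $M_v$-exposed vertex lies.

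The classical argument avoids induction on $|V(G)|$. If $G$ is not factor-critical then some maximum matching misses at least two vertices; among all triples $(M,u,v)$ with $M$ maximum and $u,v$ both $M$-exposed, pick one minimizing $d_G(u,v)$. If $d_G(u,v)\ge 2$, let $w$ be the neighbour of $u$ on a shortest $u$--$v$ path and, using the hypothesis, take a maximum matching $M'$ missing $w$. The $w$-component $P$ of $M \triangle M'$ is an alternating path whose other end is $M$-exposed; the swap $M'' = M \triangle P$ is then a maximum matching missing $w$ together with one of $u,v$, giving two exposed vertices at distance strictly less than $d_G(u,v)$ and contradicting the choice of $(M,u,v)$. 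Hence $d_G(u,v)=1$ and $M+uv$ augments, the final contradiction. Your symmetric-difference idea is the right tool, but the extremal principle to organise it around is ``minimum distance between two vertices exposed by a common maximum matching'', not ``fewest vertices in a counterexample''.
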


Denote by $K_{s,t}(a,b)$ be the graph obtained from taking a copy of $K_{s,t}$ and splitting $a$ and $b$ vertices in the partite sets of $K_{s,t}$  with sizes $s$ and $t$ respectively.

By Theorem~\ref{chvatal}, we have
\begin{equation}\label{eq-of-f(k-1,k-1)}
f(k-1,k-1)= \left\{
                  \begin{array}{ll}
                      k^2-k , & \hbox{for odd $k$;} \\
                     k^2-3k/2, & \hbox{for even $k$.}
                  \end{array}
                \right.
\end{equation}

Define
$$g(k-1,k-1)=\left\{
              \begin{array}{ll}
                (2k^2-3k-1)/2, & \hbox{for odd $k$;} \\
                k^2-2k+1, & \hbox{for even $k$.}
              \end{array}
            \right.$$

\begin{lemma}\label{lemma for Kst even t}
Let $\mathcal{F}=\{S_{t+1},M_{2t},K_{2,t-1}(0,i):i=0,1,\ldots,t-1\}$ with $t\geq 3$. If $n$ is sufficiently large, then
$$\emph{ex}(n,\mathcal{F})=g(t-1,t-1).$$
\end{lemma}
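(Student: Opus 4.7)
Plan.

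Lower bound. I exhibit an $\mathcal{F}$-free graph on $n$ vertices with exactly $g(t-1,t-1)$ edges. For even $t$, take $G^{\star}=K_{t}\cup K_{t-1}$ together with $n-(2t-1)$ isolated vertices; this has $\binom{t}{2}+\binom{t-1}{2}=(t-1)^{2}=g(t-1,t-1)$ edges. For odd $t$, take $G^{\star}=K_{t}\cup H$ together with the requisite isolated vertices, where $H$ is any extremal graph from $\mathcal{E}_{(t-1)/2,\,t-2}$ supplied by Theorem~\ref{chvatal}; evaluating the explicit formula for $f((t-1)/2,t-2)$ in the parity $t-2$ odd gives $e(H)=(t-1)(t-2)/2+(t-3)/2$, so $e(G^{\star})=(t-1)^{2}+(t-3)/2=g(t-1,t-1)$. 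In both cases $\Delta(G^{\star})\le t-1$ and $\nu(G^{\star})\le t-1$, ruling out $S_{t+1}$ and $M_{2t}$; moreover the only vertices of degree $t-1$ sit inside the $K_{t}$-component and are therefore pairwise adjacent, which forbids every $K_{2,t-1}(0,i)$ via the key claim below.

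Upper bound. Let $G$ be $\mathcal{F}$-free on $n\gg 1$ vertices. From $S_{t+1}$- and $M_{2t}$-freeness we get $\Delta(G)\le t-1$ and $\nu(G)\le t-1$, and Theorem~\ref{chvatal} already yields the crude estimate $e(G)\le f(t-1,t-1)$. The crucial structural point, which I will call the \emph{key claim}, is that $K:=\{v\in V(G):d_G(v)=t-1\}$ induces a clique in $G$. Indeed, for non-adjacent $u,v\in K$ let $c=|N(u)\cap N(v)|\in\{0,1,\ldots,t-1\}$; then choosing $c$ common neighbors together with $t-1-c$ private neighbors on each side exhibits $K_{2,t-1}(0,t-1-c)\subseteq G$, contradicting $\mathcal{F}$-freeness. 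Hence $s:=|K|\le t$.

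Writing $e(G)=\binom{s}{2}+s(t-s)+e(G[\bar K])$, I will control the last summand by the hypotheses $\Delta(G[\bar K])\le t-2$ and $\nu(G[\bar K])\le\nu(G)-\nu(G[K])\le t-1-\lfloor s/2\rfloor$, which via Theorem~\ref{chvatal} give $e(G[\bar K])\le f(t-1-\lfloor s/2\rfloor,\,t-2)$. A direct parity-by-parity substitution of the formula of Theorem~\ref{chvatal} shows that the resulting bound on $e(G)$ equals $g(t-1,t-1)$ exactly at $s=t$ (matching the construction above) and is strictly smaller at $s=0$.

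The main obstacle lies in the intermediate range $1\le s\le t-1$, where the split-off estimate can exceed $g(t-1,t-1)$ because the matching budget $\nu(G)\le t-1$ is shared between $G[K]$, the $K$-to-$\bar K$ crossing edges, and $G[\bar K]$, a constraint invisible to the naive argument. To close the gap I will use a matching-augmentation argument: every $w\in N_{\bar K}(K)$ satisfies $d_{G[\bar K]}(w)\le t-2-|N(w)\cap K|$, and if any such $w$ is left unsaturated by a maximum matching of $G[\bar K]$ then augmenting along a crossing edge $vw$ and completing a near-perfect matching inside $K$ forces $\nu(G)\ge\nu(G[\bar K])+\lceil s/2\rceil$, sharpening the bound to $\nu(G[\bar K])\le t-1-\lceil s/2\rceil$ in that case. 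Combining this dichotomy with Gallai's Lemma (Lemma~\ref{gallai-lemma}) applied to the Gallai-Edmonds decomposition of $G[\bar K]$ -- which forces the saturated neighbors of $K$ to sit inside factor-critical components, so that the excess edges in $G[\bar K]$ are charged to those components -- yields $\binom{s}{2}+s(t-s)+e(G[\bar K])\le g(t-1,t-1)$ for every $1\le s\le t-1$, with equality occurring only at $s=t$ when $t$ is even and at $s\in\{t-1,t\}$ when $t$ is odd. Matching the two bounds completes the proof.
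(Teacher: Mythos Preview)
Your lower bound and your ``key claim'' (that the set $K$ of degree-$(t-1)$ vertices induces a clique, hence $|K|\le t$) are correct; the paper uses the same observation. The decomposition $e(G)=\binom{s}{2}+s(t-s)+e(G[\bar K])$ and the endpoint calculations at $s=0$ and $s=t$ are also fine (the claim ``strictly smaller at $s=0$'' fails for $t=4$, where $f(3,2)=9=g(3,3)$, but this is harmless).

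The genuine gap is the intermediate range $1\le s\le t-1$, exactly where you flag the difficulty. Your proposed resolution (``dichotomy plus Gallai--Edmonds charging'') is asserted rather than carried out, and a quick check shows it does not close as written. Take $t=6$ and $s=3$: you need $e(G[\bar K])\le 13$, yet even after your augmentation step gives $\nu(G[\bar K])\le t-1-\lceil s/2\rceil=3$, Theorem~\ref{chvatal} only yields $e(G[\bar K])\le f(3,4)=14$. For even $s$ your dichotomy gives no improvement at all, and $s=2,4$ at $t=6$ are even further off ($f(4,4)=20$ and $f(3,4)=14$ against targets $16$ and $11$). The underlying problem is that the global constraint $\nu(G)\le t-1$ couples the crossing edges to matchings in $G[\bar K]$ in a way that the scalar bound on $\nu(G[\bar K])$ does not capture; your sentence about ``saturated neighbours of $K$ sit[ting] inside factor-critical components'' is also not right in branch (b), since saturated vertices lie in $A\cup C$ of the Gallai--Edmonds decomposition, not in the factor-critical components of $D$.

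The paper circumvents this entirely by working component-wise rather than via the $K$/$\bar K$ split. Among all $\mathcal{F}$-free graphs with the maximum edge count it picks one with the most star components; then, using the freedom to replace a non-factor-critical, non-star component by a disjoint star without losing edges, it forces every component to be either factor-critical or a star. After disposing of $S_t$-components, the clique observation on $K$ confines all degree-$(t-1)$ vertices to a single factor-critical component $C_1$, while every other factor-critical component has $\Delta\le t-2$, and the matching budget $\nu(G)=t-1$ is allocated additively across components. This converts the problem into a small optimisation over component sizes that is solved directly. If you want to salvage your approach, you would essentially need to rediscover this component decomposition inside $\bar K$; the raw bounds $\Delta(G[\bar K])\le t-2$ and $\nu(G[\bar K])\le t-1-\lceil s/2\rceil$ are not enough.
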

\begin{proof}
Let $G$ be an extremal graph for $\mathcal{F}$.
Clearly, by Theorem~\ref{chvatal}, we have $e(G)\leq f(t-1,t-1)$.
Thus $e(G)$ is a constant depending on $t$.
Now we do not consider the isolated vertices of $G$.
Let $F_1=K_t\cup K_{t-1}$ when $t$ is even and $F_2=K_t\cup (K_t-E(S_3\cup M_{t-3}))$ when $t$ is odd.
Then $F_1$ and $F_2$ are $\mathcal{F}$-free with $g(t-1,t-1)$ edges.
Thus we have $e(G)\geq g(t-1,t-1)$.
From Theorem~\ref{chvatal} and $t\geq 3$, easy calculations show that $f(t-1,t-2)\leq g(t-1,t-1)$ and $f(t-2,t-1)\leq g(t-1,t-1)$.
Hence we may suppose $\Delta(G)= t-1$ and $\nu(G)= t-1$.

Let $s(G)$ be the number of components of $G$ which are stars.
We choose $G$  with $s(G)$ as large as possible.
First we will show that each component of $G$ is either factor-critical or a star.
Suppose for contrary that there is a component $C$ of $G$ which is neither  factor-critical nor a star.
Choose $x\in V(C)$ such that $\nu (C-\{x\})=\nu (C)-1$.
If $d_G(x)=t-1$, then $\Delta(G-\{x\})\leq t-2$, as otherwise $G$ contains a copy of  $K_{2,t-1}(0,i)$ with $0\leq i\leq t-1$.
Thus from (\ref{eq-of-f(k-1,k-1)}), we have
$$e(G-\{x\})\leq \left\{
                  \begin{array}{ll}
                    f(t-2,t-2)=  (t-1)^2-3(t-1)/2   , & \hbox{for odd $t$;} \\
                    f(t-2,t-2)= (t-1)^2-(t-1) , & \hbox{for even $t$.}
                  \end{array}
                \right.$$
Hence $e(G)\leq (t-1)^2-3(t-1)/2+(t-1)\leq  (2t^2-5t+3)/2<(2t^2-3t-1)/2$ for odd $t$ and $e(G)\leq (t-1)^2$ for even $t$.
In both cases, we are done.
Now assume that $d_G(x)\leq t-2$.
Let $G^\prime$ be the graph consisting of vertex-disjoint union of $G-\{x\}$ and a copy of $S_{t-1}$.
Then $G^\prime$ is $\mathcal{F}$-free but with $s(G^\prime)>s(G)$ and $e(G^\prime)\geq e(G)$, a contradiction to our choice of $G$.
Thus  each component of $G$ is either factor-critical or a star.

If $G$ contains an $S_t$-component, then let $G^\prime=G-S_t$.
Since $G$ is $\mathcal{F}$-free, we have $\nu(G^\prime)\leq t-2$ and $\Delta(G^\prime)\leq t-2$.
The result follows similarly as the last paragraph (by (\ref{eq-of-f(k-1,k-1)}) and $e(S_t)=t-1$).
Since $G$ is an extremal graph, we may suppose that each star of $G$ is $S_{t-1}$.

Now, we may suppose that $C_1,C_2,\ldots,C_m$ are the components of $G$  such that $C_i$  are factor-critical.
Moreover, suppose that $\Delta(C_1)=t-1$ and $\Delta(C_i)\leq t-2$ for $i=2,\ldots,m$.
Let $V(C_i)=n_i$ and let $s=s(G)$ be the number of $S_{t-1}$-components of $G$.
Hence, we have
$$e(C_i)\leq \min\left\{{n_i \choose 2},\left\lfloor \frac{(t-2)n_i}{2}\right\rfloor\right\} \mbox{ for }i=2,\ldots,m.$$
Since $n_1\geq  2\lfloor t/2 \rfloor+1$ (by $\Delta(C_1)=t-1$ and $n_1$ is odd), by $\nu(G)= t-1$, we have $n_i\leq 2 \lceil t/2 \rceil-1$.
If $n_i=t$ for some $i=2,\ldots,m$, then $t$ is odd and $m=2$.
Hence $G$ consists of two components with on $t$ vertices.
Since $C_1$ and $C_2$ are factor-critical graphs with $\Delta(C_1)=t-1$ and $\Delta(C_2)=t-2$, we have $e(G)\leq(t(t-1)+(t-2)t-1)/2=(2t^2-3t-1)/2$.
The result follows.
Thus we may suppose $n_i\leq t-1$ and hence $e(C_i)= {n_i \choose 2}$  by the maximality of $G$ for $i=2,\ldots,m$.
Let $\ell$ be the number of vertices in $C_1$ with degree $t-1$.
Since $G$ does not contain a copy of $K_{2,t-1}(0,i)$ for $0\leq i\leq t-1$, the vertices of $G$ with degree $t-1$ form a clique in $G$ and $\ell\leq t$.
Thus, we have
\begin{equation}\label{eq.1}
e(C_1)\leq \min\left\{ {\ell \choose 2}+ \ell(t-\ell)+{n_1-\ell \choose 2}, \left\lfloor\frac{\ell(t-1)+(n_1-\ell)(t-2)}{2}\right\rfloor  \right\}.
\end{equation}
Hence, $$e(G)=\max\left\{e(C_1)+\sum_{i=2}^m{n_i \choose 2}+s(t-2):\sum_{i=1}^{m}{\lfloor n_i/2\rfloor}+s=t-1\right\}.$$
We will estimate $e(G)$ by considering $e(C_i)/\nu(C_i)$ for each component of $G$ in following two cases:

(1). $t$ is odd.
Since $n_i\leq t-2$ for $i=2,\ldots,m$, it is easy to see that $e(G)$ attains its maximum when $n_1=t$ ($\ell=t$), $m=1$ and $s=\lfloor t/2\rfloor$.\footnote{There are other possible values of $n_1$, $m$ and $s$.}
Hence, $e(G)\leq t(t-1)/2+\lfloor t/2\rfloor (t-2)=(t-1)^2< (2t^2-3t-1)/2$, we are done.

(2). $t$ is even.
Then, $e(G)$  attains its maximum when $n_1=2t-1$ and $\ell=t$, or $n_1=t$ and $n_2=t-1$.
Thus  $e(G)\leq t(t-1)/2+(t-1)(t-2)/2=(t-1)^2 $.
The proof of the lemma is complete.
\end{proof}

To establish the lower bound of Theorem~\ref{coro5}, we need the following graphs.
For even $t$ with $t\geq 6$,
let $X=\{x_1,\ldots,x_{t-1}\}$ and $Y=\{y_1,\ldots,y_{t-1}\}$.
Set $X_1=\{x_1,x_2\ldots,x_{t/2-1}\}$, $Y_1=\{y_1,y_2,\ldots,y_{t/2-1}\}$, $X_2= \{x_{t/2},x_{t/2+1},\ldots,x_{t-2}\}$ and $Y_2=\{y_{t/2},y_{t/2+1},\ldots,y_{t-2}\}$.
The graph   $H_{2t-1}$ is obtained as following:
\begin{itemize}
  \item Taking two vertex-disjoint copies of $K_{t-1}$ with vertex sets $X$ and $Y$ respectively;
  \item adding a matching with size $t/2-1$ between $X_1$ and $Y_1$, a cycle of length $t-2$ between $X_2$ and $Y_2$ and the edge $x_{t-1}y_{t-1}$;
  \item adding a new vertex $z$ and joining it to each vertex of $X_1$ and $Y_1$;
  \item deleting a matching with size $t/2-1$ between $X_1$ and $X_2$ and between $Y_1$ and $Y_2$ respectively.
\end{itemize}
For $t=4$, let $H_7$ be graph obtained from vertex-disjoint union of copies of $K_4$ and $K_3$ by deleting an edge of $K_4$ and joining the incident vertices of the deleted edge to $K_3$ by two independent edges.

\begin{proposition}\label{extremal-s}
Let $t\geq 4$ be even and $\mathcal{K}(t)=\{K_{a,b}(0,c):a+b=t+1, \mbox{ and } a\geq 3 \mbox{ or }c=0\}$. Then $H_{2t-1}$ is $\mathcal{K}(t)$-free.
\end{proposition}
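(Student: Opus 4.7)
The plan is to check, by cases on $a$, that $H_{2t-1}$ contains no $K_{a,b}(0,c)$ with $a+b=t+1$ and either $a\geq 3$ or $c=0$. A necessary condition for such a containment is that the image $V^*=\{v_1,\dots,v_a\}$ of the $a$-side be an independent set in $H_{2t-1}$ whose common neighborhood has size at least $b-c$; the bulk of the work is bounding common neighborhoods.

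I begin by recording the structural facts I will use. $|V(H_{2t-1})|=2t-1$; $\Delta(H_{2t-1})=t-1$ with $d(z)=t-2$ and $N(z)=X_1\cup Y_1$; each of $H_{2t-1}[X]$ and $H_{2t-1}[Y]$ is $K_{t-1}$ with a perfect matching on its $t-2$ non-``tail'' vertices deleted, so its independence number equals $2$ and every independent pair is a deleted-matching edge; and both $X_2\cup\{x_{t-1}\}$ and $Y_2\cup\{y_{t-1}\}$ induce cliques. Consequently every independent set $I$ satisfies $|I|\leq 4$, and if $z\in I$ then $|I\cap X|\leq 1$ and $|I\cap Y|\leq 1$, forcing $|I|\leq 3$. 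In particular, $a\leq 4$.

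For $a=1$, $K_{1,t}$ is a star of degree $t$, excluded by $\Delta=t-1$. For $a=2$ the hypothesis forces $c=0$, and embedding $K_{2,t-1}$ requires two vertices $u,v$ with $|N(u)\cap N(v)|\geq t-1$: if $u\sim v$, this quantity is at most $t-2$, and if $u\not\sim v$ equality forces $N(u)=N(v)$ with $d(u)=d(v)=t-1$. A short inspection shows that $H_{2t-1}$ has no such non-adjacent twin pair, because non-adjacent pairs inside $X$ (or $Y$) differ in their out-of-part neighborhoods (one sends to $Y_1\cup\{z\}$, the other to cycle-vertices in $Y_2$), and any non-adjacent $X$-$Y$ pair has its neighborhoods concentrated in disjoint halves.

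For $a\in\{3,4\}$ I classify $V^*$ by the triple $(|V^*\cap X|,|V^*\cap Y|,|V^*\cap\{z\}|)$: the only possibilities are $(2,1,0)$, $(1,2,0)$, $(1,1,1)$ when $a=3$ and $(2,2,0)$ when $a=4$. I then bound the common neighborhood of $V^*$ in each shape. In $(2,1,0)$ (and symmetrically $(1,2,0)$), the matched pair satisfies $N(v_1)\cap N(v_2)=X\setminus\{v_1,v_2\}\subseteq X$, so intersecting with $N(v_3)$ contributes only the $X$-neighbors of $v_3\in Y$, at most two (via the bridges). In $(1,1,1)$, $v_1\in X_2\cup\{x_{t-1}\}$ and $v_2\in Y_2\cup\{y_{t-1}\}$ (to be non-adjacent to $z$), and then $N(v_1)\cap N(z)\subseteq X_1$ while $N(v_2)\cap N(z)\subseteq Y_1$, making the triple common neighborhood empty. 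In $(2,2,0)$, $N(v_1)\cap N(v_2)\subseteq X$ and $N(v_3)\cap N(v_4)\subseteq Y$ are disjoint, so the common neighborhood is again empty. Combining these upper bounds with $b-c=t+1-a-c$ and the vertex-count bound $c(a-1)\leq t-2$ (from $|V(K_{a,b}(0,c))|=t+1+c(a-1)\leq 2t-1$), a contradiction arises in every shape whenever $t\geq 8$. The main obstacle of the plan is the boundary case $a=3$, $t=6$, shape $(2,1,0)$ with $c=2$, where the abstract bound of $2$ just matches $b-c=2$; a direct check using the specific $4$-cycle structure on $X_2\cup Y_2$ in $H_{11}$ shows that no valid $v_3\in Y_2$ exists. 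The separately-defined graph $H_7$ at $t=4$ is handled by a brief listing of its independent sets and their common neighborhoods.
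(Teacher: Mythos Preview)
Your argument contains a genuine gap at its starting point: the assertion that the image $V^*=\{v_1,\dots,v_a\}$ of the $a$-side must be an \emph{independent} set in $H_{2t-1}$ is false. Subgraph containment only requires that edges of $K_{a,b}(0,c)$ map to edges of $H_{2t-1}$; non-edges need not map to non-edges. (You yourself implicitly acknowledge this when, for $a=2$, you separately treat the case $u\sim v$.) Once independence is dropped, your bound $a\leq 4$ collapses, and your case analysis for $a\in\{3,4\}$---which enumerates only the independent-set shapes $(2,1,0)$, $(1,2,0)$, $(1,1,1)$, $(2,2,0)$---misses, for instance, the configuration where $A$ consists of three or more pairwise adjacent vertices of $X$. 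Such configurations are precisely the heart of the matter: three vertices of $X_1$ (available once $t\geq 8$) form a triangle in $H_{2t-1}$ and must still be excluded as a possible $A$.

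The paper's proof avoids this pitfall by organizing the cases not by independence structure but by \emph{location}: either $A\subseteq X\cup\{z\}$ (symmetrically $Y\cup\{z\}$), or $A$ meets both $X$ and $Y$. The key structural input in the first case is that any three vertices of $X\cup\{z\}$ have no common neighbor in $Y$ (since every $y\in Y$ has at most two $X$-neighbors and every $y\in Y_1$ has at most one), which forces $B\subseteq X\cup\{z\}$ as well; one then exploits that the complement of $H_{2t-1}[X\cup\{z\}]$ is connected to bound $a+(b-c)\leq t-1$, and finishes with vertex-count estimates. Your common-neighborhood bookkeeping is correct where it applies, and your handling of $a\leq 2$ and of the boundary $t\in\{4,6\}$ is reasonable, but the $a\geq 3$ analysis needs to be redone without the independence hypothesis.
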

\begin{proof}
Clearly $H_{7}$ is $\mathcal{K}(4)$-free.
Let $t\geq 6$.
It is not hard to check that $H_{2t-1}$ does not contain a complete bipartite graph on at least $t+1$ vertices, i.e., $H_{2t-1}$ does not contain a copy of $K_{a,b}(0,0)$ with $a+b=t+1$.
Suppose that $H_{2t-1}$ contains a copy of $K_{a,b}(0,c)$ with $a+b=t+1$, $a\geq 3$ and $c>0$.
Then $H_{2t-1}$ contains a copy of $K_{a,b-c}$.
Let $A$ and $B$ be the classes of $K_{a,b-c}$ with sizes $a$ and $b-c$ respectively.
Clearly, we have
\begin{equation}\label{eq-for-extremal-K_s,t}
|V(K_{a,b}(0,c))|=a+b-c+ac=t+1+(a-1)c.
\end{equation}
If $b-c=1$, then $c=t-a$.
By $a\geq 3$ and $t\geq6$, we have $|V(K_{a,b}(0,c))|=t+1+(a-1)(t-a)\geq 2t>|V(H_{2t-1})|$, a contradiction.
We may suppose that $b-c\geq 2$.
Assume that $t\geq 8$.
Let $z$ be the vertex in definition of $H_{2t-1}$.
We will find contradictions in the following two cases.

\medskip

{\bf Case 1.} $A\subseteq X\cup\{z\}$ or $A\subseteq Y\cup\{z\}.$
Without loss of generality, let  $A\subseteq X\cup\{z\}$.
Since $a\geq 3$, it is obviously that each vertex of $B$ belongs to $X\cup\{z\}$ (any three vertices of $A\cup\{z\}$ have no common neighbours in $Y$).
Note that the complement graph of $H_{2t-1}[X\cup\{z\}]$ is connected.
$H_{2t-1}[X\cup\{z\}]$ contains no complete bipartite graph on at least $t$ vertices.
Thus we  have $a+b-c\leq t-1$, i.e., $c\geq 2$.
Recall that $a\geq 3$.
(a). $c\geq 4$, or $c=3$ and $a\geq 4$.
Then there are at most $t+c+2(a-1)<t+1+(a-1)c$ vertices incident with a copy of $K_{a,b}(0,c)$ with $A\subseteq X\cup\{z\}$, a contradiction .
(b). $c=2$.
Then $a+b-c=t-1$.
If $z\notin V(K_{a,b-c})$, then $V(K_{a,b-c})=X$.
Note that there is a matching  between $X_1$ and $X_2$ in the complement graph of $H_{2t-1}$.
There are at most $\lfloor a/2\rfloor$ vertices of $X_2$ belonging to $A$.
Hence there are at most $t-1+ 2\lfloor a/2\rfloor+ \lceil a/2\rceil<t+1+2(a-1)$ (by $a\geq 3$) vertices incident with a copy of $K_{a,b}(0,c)$ with $A\subseteq X\cup\{z\}$, a contradiction.
If $z\in V(K_{a,b-c})$, then an easy observation shows that $K_{a,b-c}=K_{t-2,1}$, a contradiction to $b-c\geq 2$.
(c) $c=3$ and $a=3$.
Then $a+b-c=t-2$.
Clearly, $A$ consists of $z$ and two vertices of $X_2$, otherwise we have $|V(K_{a,b-c})|< t+3+2+2=t+7$, a contradiction to (\ref{eq-for-extremal-K_s,t}).
Now, since $t\geq 8$, $A$ has at most $t-6$ common neighbors in $X$ and hence  $a+b-c\leq t-3$.
This final contradiction completes our proof for Case 1.
\medskip

{\bf Case 2.} $A\cap X\neq \emptyset$ and $A\cap Y\neq \emptyset$.

\medskip

If $z\in A$, then $|A\cap X|=|A\cap Y|=1$, and hence $a=3$, $b-c=2$ and $c=t-4$.
Thus by (\ref{eq-for-extremal-K_s,t}) and $t\geq 8$, we have $|V(K_{a,b}(0,c))|= t+1+2(t-4)\geq 2t>|V(H_{2t-1})|$, a contradiction.
Let $z\notin A.$
Now, without loss of  generality, we may suppose that $|A\cap X|\geq 2$.
Recall that any three vertices of $X$ have no common neighbors in $Y$.
It follows from $b-c\geq 2$ that $|A\cap X|=2$ and $1\leq |A\cap Y|\leq 2$.
If  $|A\cap Y|=1$, then $a=3$ and $b-c=2$ for $t=8$ and $a=3$ and $2\leq b-c\leq 3$ for $t\geq 10$.
Thus $c=t-4$ for $t=8$ and $c\geq t-5$ for $t\geq 10$.
By (\ref{eq-for-extremal-K_s,t}), we have $|V(K_{a,b}(0,c))|= t+1+2c\geq 2t>|V(H_{2t-1})|$, a contradiction.
If  $|A\cap Y|=2$, then $a=4$, $b-c=2$ and $c=t-5$.
By (\ref{eq-for-extremal-K_s,t}) and $t\geq 8$, we have $|V(K_{a,b}(0,c))|= t+1+3(t-5)=4t-14\geq 2t>|V(H_{2t-1})|$, which is also a contradiction.
The proof of Case 2 is complete.
\medskip

Let $t=6$.
Recall that $b-c\geq 2$.
Consider $K_{a,b-c}=K_{4,2}$, $K_{a,b-c}=K_{3,2}$ and $K_{a,b-c}=K_{3,3}$ respectively, it is easy to see that  $H_{11}$ is $\mathcal{K}(6)$-free.
The proof is complete.
\end{proof}

Now, we are ready for the proof of Theorem~\ref{coro5}.

\medskip

\noindent{\bf Proof of Theorem~\ref{coro5}:}

\begin{proof}
Let $s\leq t$.
If $t\leq 2$, then $K_{s,t}$ is a star or $C_4$.
The result follows by Corollaries~\ref{coro2} and~\ref{coro4}.
Thus we can assume that $t\geq 3$.
Let $F_1\in \mathcal{H}(n,p,s,t-1,t-1,S_{s-1})$  be the graph obtained from taking a copy of $H^\prime(n,p,s)$, embedding a copy of $H_{2t-1}$ when $t$ is even (two vertex-disjoint copies of $K_{t}$ when $t$ is odd respectively.) in one class of the copy of $T(n-s+1)$ in $H^\prime(n,p,s)$ and embedding an extremal graph for $S_{s-1}$ on $s-1$ vertices in the copy of $K_{s-1}$ in $H^\prime(n,p,s)$.
Let $F_2$ be the graph obtained from  taking a copy of $H(n,p,s)$ and embedding an extremal graph in Lemma~\ref{lemma for Kst even t} into one class of the copy of $T(n-s+1)$ in $H(n,p,s)$.

Lemma~\ref{liu lemma} implies that $\mathcal{M}(K_{s,t}^{p+1})=\{K_{s,t}(i,j):0\leq i\leq s, 0\leq j\leq t\}$.
Careful  observation shows that after deleting any  $s-1$ vertices, say $X$, of any $M\in \mathcal{M}(K_{s,t}^{p+1})$ with $S_{s-1}\nsubseteq M[X]$, the obtained graph satisfies one of the following:
\begin{itemize}
  \item  contains a copy of $M_t$;
  \item  contains a copy of $S_t$;
  \item  contains a copy of $K_{a,b}$ with $a+b=t+1$;
  \item  contains a copy of $K_{a,b}(0,c)$ with $a+b=t+1$, where $a\geq 3$  when $s\neq t-1$, and  $a\geq 2$ when $s= t-1$; or
  \item  has at least $2t$ vertices.
  \end{itemize}

Thus, by Proposition~\ref{L-critical extremal 2}, if $s\neq t-1$, then both $F_1$ and $F_2$ are $K_{s,t}^{p+1}$-free; if $s=t-1$, then $F_2$ is $K_{s,t}^{p+1}$-free.
Hence, we have $e(G)\geq \max\{e(F_1),e(F_2)\}$ and if $s=t-1$, then $e(G)\geq  e(F_2)$

Let $F_n$ be an extremal graph for $K_{s,t}^{p+1}$.
Then, by the last paragraph, we have
\begin{equation}\label{eq-for-K-s-t}
e(F_n)\geq h(n,p,s)+f(t-1,t-1)-\left\lceil\frac{s-1}{2}\right\rceil+i,
\end{equation}
where $i=1$ if $s=t$ is even and $i=0$ otherwise.
By Theorem~\ref{erdos-stone}, $F_n$ contains a copy of $T_p(n_1p)$. Since $\mathcal{M}$ contains a matching, as previous arguments in Section~\ref{section5}, by progressive induction (Lemma~\ref{progrssion induction}), $F_n$ can be partitioned into $\cup _{i=1}^p (B^\prime_i\cup C^\prime_i)\cup E$ with  $|B^\prime_i\cup C^\prime_i|=n_i^\prime$  and $|E|=s-1$ satisfies the following.
\begin{itemize}
  \item  $F_n[\cup _{i=1}^p B^\prime_i]=T_p(n_4p)$ is an induced subgraph of $F_n$;
  \item each vertex of $E$ is adjacent to each vertex of $F_n[\cup _{i=1}^p B^\prime_i]$;
  \item each vertex of $C^\prime_i $ is adjacent to each vertex of $B^\prime_{j\neq i}$.
\end{itemize}
If two of $E(F_n[C^\prime_i])$ are non-empty, then by (\ref{equation for K_s,t}) in Lemma~\ref{lemma for F(k,p)}, we have $e(F_n)\leq h(n,p,s)+t^2-2t$, a contradiction to (\ref{eq-for-K-s-t}).
Thus, we may assume that $E(F_n[B^\prime_i\cup C^\prime_i])$ are empty for $i=2,\ldots,p$.
Hence by Proposition~\ref{L-critical extremal 2}, we have $e(F_n[B^\prime_1\cup C^\prime_1])\leq \mbox{ex}(n^\prime_1+s-1,\mathcal{M}(K_{s,t}^{p+1}))$.

\medskip

{\bf Claim.} Let $p(s,t)=\mbox{ex}(n,\mathcal{M}(K_{s,t}^{p+1}))-h^\prime(n,1,s)$.
Then there exists an $n_0$ such that $p(s,t)$ depends on $s$ and $t$, when $n\geq n_0$.

\medskip

\begin{proof}
Let $F$ be an extremal graph for $\mathcal{M}(K_{s,t}^{p+1})$ and $n\geq n_0$.
Clearly, $H(n,1,s)$ is $\mathcal{M}(K_{s,t}^{p+1})$-free, and hence $e(F)\geq (s-1)(n-s+1)$.
Note that $\mathcal{M}(K_{s,t}^{p+1})$ contains $s$ vertex-disjoint copies of $S_{t+1}$.
There is a set of vertices $X$ such that each vertex in $X$ has degree at least $n-c(s,t)$, where $c(s,t)$ is a constant.
Let $Y$ be the isolated vertices of $V(F)-X$ and $Z=V(F)-X-Y$.
Note that there is a large complete bipartite graph between $X$ and $Y$ (by $n\geq n_0$).
Since $F$ is an extremal graph, $F[X,Y]$ is  a  complete bipartite graph.
Let $p(s,t)=e(F[X])+e(F[Z])-(|X||Z|-e(F[X,Z]))$.
The result follows since $e(F[X])$, $e(F[Z])$ $e(F[X,Z])$, $|X|$ and $|Z|$ depend on $s$ and $t$.
\end{proof}

Thus, since $n_1$ is sufficiently large, it follows from the claim that
\begin{equation}\label{eq for theorem 1.3}
e(F_n)\leq \max\left\{(s-1)(n-s+1)+p(s,t)+  \sum_{i\neq j} n^\prime_i n_j^\prime:\sum_{i=1}^p n_i^\prime=n-s+1   \right\}.
\end{equation}

Embedding an extremal graph for $\mathcal{M}(K_{s,t}^{p+1})$ on $n_1$ vertices in the   partite set of $K_{n_1,\ldots,n_p}$ with size $n_1$, where $n_1=\lceil(n-s+1)/p\rceil+s-1$ and $n_i\in\{\lceil(n-s+1)/p\rceil,\lfloor(n-s+1)/p\rfloor\}$, by Proposition~\ref{L-critical extremal 2}, we have
$$
\mbox{ex}(n,K_{s,t}^{p+1})\geq \mbox{ex}(n_1,\mathcal{M}(K_{s,t}^{p+1}))+ \sum_{i\neq j}n_in_j=h^\prime(n,p,s)+p(s,t).
$$
Combining with (\ref{eq for theorem 1.3}), we have $\mbox{ex}(n,K_{s,t}^{p+1})=h^\prime(n,p,s)+p(s,t)$.
The proof of Theorem~\ref{coro5} is complete.
\end{proof}

\section{Conclusion}\label{conclusion}
By similar method as the proof of Theorem~\ref{maintheorem1}, one can obtain the following corollary.

\begin{corollary}\label{matching2}
Let $\mathcal{F}$ be a family of graphs with $p(\mathcal{F})=p\geq 2$. If $\mathcal{M}(\mathcal{F})=\{M_{2s}\}$, then $$\emph{ex}(n,\mathcal{F})=h(n,p,s),$$
provided $n$ is sufficiently large. Moreover, the unique graph in $\mathcal{H}(n,p,s,0,0,K_s)$ is the extremal graph for $\mathcal{F}$.
\end{corollary}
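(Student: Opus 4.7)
My plan is to mimic the proof of Theorem~\ref{maintheorem1}(\romannumeral1) almost verbatim, taking advantage of the significant simplifications that occur when $\mathcal{M}(\mathcal{F}) = \{M_{2s}\}$. First I would compute the relevant invariants: since $M_{2s}$ is bipartite with color classes of size $s$, Proposition~\ref{3.8} gives $q = q(\mathcal{M}) = s$, and the minimum-size independent coverings in $\mathcal{S}(\mathcal{M})$ consist of one endpoint of each edge, every vertex having degree $1$ in $M_{2s}$. Hence $k = 1$ and $f(k-1,k-1) = 0$. Since every covering of $M_{2s}$ has size at least $s$, we have $\beta(M_{2s}) \geq q$, so $\mathcal{B}(\mathcal{M}) = \{K_s\}$ and $\mathrm{ex}(s-1, \mathcal{B}) = \binom{s-1}{2}$, uniquely realized by $K_{s-1}$. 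Unwrapping the definition of $\mathcal{H}(n,p,s,0,0,K_s)$ produces the single graph $K_{s-1} + T_p(n-s+1) = H(n,p,s)$.

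For the lower bound, Proposition~\ref{L-critical extremal 2} together with the Erd\H{o}s--Gallai extremal graph $K_{s-1} + \overline{K}_{n_1-s+1}$ for $M_{2s}$ on large $n_1$ shows that $H(n,p,s)$ avoids every $F \in \mathcal{F}$; this gives $\mathrm{ex}(n,\mathcal{F}) \geq h(n,p,s)$.

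For the upper bound I would run progressive induction (Lemma~\ref{progrssion induction}) on an extremal graph $F_n$ with $e(F_n) \geq h(n,p,s) \geq t_p(n)$, exactly as in Section~\ref{section5}. Theorem~\ref{erdos-stone} embeds a large $T_p(n_2 p)$ into $F_n$; no partite class of this Tur\'an graph can contain $M_{2s}$, since this would produce some $F \in \mathcal{F}$ by definition of the decomposition family, so deleting one maximum matching in each class yields an induced $T_p(n_3 p) \subseteq F_n$ with $n_3$ still large. Then peel off vertices outside this Tur\'an subgraph which have at least $c^{2i}n_3$ neighbours in each of its classes; the process must stop after at most $s-1 = q-1$ iterations, for otherwise the induced subgraph produced on one class together with the peeled vertices would have so many edges that Lemma~\ref{main1} guarantees a copy of $M_{2s}$ attached to the complete multipartite remainder, forcing a copy of some $F \in \mathcal{F}$.

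Once the process stops, I would partition the remaining vertices as in the proof of Theorem~\ref{maintheorem1}(\romannumeral1) into classes $C^\prime_1,\ldots,C^\prime_p, D$, with $\ell \leq s-1$ dominating vertices and an induced $T^\prime_\ell = T_p(n_4 p)$. The edge-count comparison $e_F - e_T$ between $F_n$ and $H_n \in \mathcal{H}(n,p,s,0,0,K_s)$ either descends via progressive induction, or forces $|D| = 0$, $\ell = s-1$, and each $C^\prime_i$-vertex completely joined to all $B^\prime_{j \neq i}$. Since $k = 1$, Lemma~\ref{lemma for F(k,p)(1)} rules out any edge inside any $C^\prime_i$ (such an edge would splice with the peeled structure to produce $M_{2s}$ in one class), so each $C^\prime_i$ is independent. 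Combined with $\mathcal{B} = \{K_s\}$ pinning down the $s-1$ dominating vertices to be $K_{s-1}$, this yields $e(F_n) \leq h(n,p,s)$ with equality only for $H(n,p,s)$.

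The main obstacle, as in Theorem~\ref{maintheorem1}(\romannumeral1), is the clean-up argument that refines the large Tur\'an subgraph into an induced one with no exceptional crossing edges and controls the exceptional set $D$; however, because $\mathcal{M} = \{M_{2s}\}$ has $k = 1$, the $f(k-1,k-1) = 0$ term collapses, so the usual ``independent set inside a class'' case of Lemma~\ref{lemma for F(k,p)} immediately forces $E_{0,0}$ and yields uniqueness without any extra structural analysis.
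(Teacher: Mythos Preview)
Your proposal is correct and follows precisely what the paper intends: no separate proof of Corollary~\ref{matching2} is given there, only the remark that it follows ``by similar method as the proof of Theorem~\ref{maintheorem1}'', and you carry this out with the simplifications arising from $q=s$, $k=1$, $f(0,0)=0$, and $\mathcal{B}=\{K_s\}$. The one technicality to flag is that Proposition~\ref{L-critical extremal 2} and Lemmas~\ref{main1},~\ref{lemma for F(k,p)(1)},~\ref{lemma for F(k,p)} are formally stated only for the family $\{G^{p+1}\}$ rather than for an arbitrary $\mathcal{F}$ with $\mathcal{M}(\mathcal{F})=\{M_{2s}\}$; their content carries over verbatim from the definition of the decomposition family (and your parenthetical remarks already supply the right direct arguments), but in a formal write-up you should say this explicitly rather than cite those lemmas as black boxes.
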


\remark There are many interesting graphs that belong to the graphs set of Corollary~\ref{matching2}, including the Petersen graph, vertex-disjoint union of cliques with same order and the dodecahedron $D_{20}$.

\medskip

In Theorem~\ref{coro5}, we do not determine  $\mbox{ex}(n,\mathcal{M}(K^{p+1}_{s,t}))$. So we may pose the following conjecture.
\begin{conjecture} Let $n\geq s+2t$ and $p\geq 3$. Then
\begin{equation*}
\emph{ex}(n,\mathcal{M}(K^{p+1}_{s,t}))=h(n,1,s)+f(t-1,t-1)-\left\lceil\frac{s-1}{2}\right\rceil+i,
\end{equation*}
where $i=1$ if $s=t$ is even and $i=0$ otherwise.
\end{conjecture}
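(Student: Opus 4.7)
The plan is to establish both directions of the claimed equality using the structural framework developed in the proof of the Claim inside Theorem~\ref{coro5}. For the lower bound, I would take $X=K_{s-1}$ joined completely to a vertex set $Q$ of size $n-s+1$, and inside $Q$ place a carefully chosen graph $Q^{*}$ with exactly $f(t-1,t-1)-\lceil (s-1)/2\rceil+i$ edges, modeled on the extremal graphs from Lemma~\ref{lemma for Kst even t} (namely $K_t\cup K_{t-1}$ for even $t$, or $K_t\cup(K_t-E(S_3\cup M_{t-3}))$ for odd $t$, with small local modifications reflecting $s$). Verifying that $G^{*}=X+Q^{*}$ is $\mathcal{M}(K_{s,t}^{p+1})$-free would reduce to a case analysis on any hypothetical embedded $K_{s,t}(i,j)$: an identification of embedded vertices with $X$-vertices via a vertex split forces the residual subgraph inside $Q^{*}$ to contain one of $\{S_{t+1},M_{2t},K_{2,t-1}(0,\cdot)\}$, contradicting the construction of $Q^{*}$.

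For the upper bound, let $F$ be an extremal graph. By the Claim in the proof of Theorem~\ref{coro5}, $V(F)$ decomposes as $X\sqcup Y\sqcup Z$, where $X$ is the set of vertices of degree at least $n-c(s,t)$, $Y$ is the set of vertices of $V(F)\setminus X$ isolated in $F-X$, and $Z$ is the rest; $F[X,Y]$ is complete bipartite, and
\[\emph{ex}(n,\mathcal{M}(K_{s,t}^{p+1}))=h^{\prime}(n,1,s)+e(F[X])+e(F[Z])-(|X||Z|-e(F[X,Z])).\]
Since $K_{s,t}(0,t)=s\cdot S_{t+1}\in\mathcal{M}$, we have $|X|\leq s-1$; extremality then forces $|X|=s-1$, $e(F[X])=\binom{s-1}{2}$, and $F[X,Z]$ complete bipartite, so the problem reduces to bounding $e(F[Z])$. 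I would argue that the dominating clique $K_{s-1}=F[X]$ makes each member of $\{S_{t+1},M_{2t},K_{2,t-1}(0,i):0\leq i\leq t-1\}$ forbidden in $F[Z]$, by identifying the independent covering vertices of the obstruction with a suitable subset of $X$ and completing (via a vertex split) to a copy of some $K_{s,t}(i^{\prime},j^{\prime})\in\mathcal{M}$ living in $F$. Applying Lemma~\ref{lemma for Kst even t} would then yield $e(F[Z])\leq g(t-1,t-1)$, and checking parities shows this equals $f(t-1,t-1)-\lceil(s-1)/2\rceil+i$, matching the lower bound.

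The main obstacle will be rigorously producing the identifications in the upper-bound step: for each small obstruction $K_{2,t-1}(0,i)\subseteq F[Z]$, one must exhibit an explicit vertex identification with $X$ and a vertex split that produces a genuine $K_{s,t}(i^{\prime},j^{\prime})$ subgraph of $F$, which is delicate because the identification must respect the bipartite covering structure of the obstruction while accommodating the clique edges inside $F[X]$. The parity bonus $+i$, nonzero only when $s=t$ is even, reflects the $H_{2t-1}$-based construction of Section~\ref{section-proof-1.3} which saves exactly one edge in that regime, so tracking its impact on both bounds—and matching the purely numerical identity $g(t-1,t-1)=f(t-1,t-1)-\lceil(s-1)/2\rceil+i$ to the graph-theoretic bound on $e(F[Z])$—will require case analysis by the parities of both $s$ and $t$ and careful interplay between the matching number of $F[X]$ and the internal structure of $F[Z]$.
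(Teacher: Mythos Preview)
This statement is a \emph{conjecture} in the paper, not a theorem: the author explicitly writes ``we do not determine $\mathrm{ex}(n,\mathcal{M}(K^{p+1}_{s,t}))$'' and then poses it as open. So there is no proof in the paper to compare against; the only question is whether your sketch actually settles the conjecture. It does not.

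Your upper-bound step contains a real gap. You assert that extremality forces $e(F[X])=\binom{s-1}{2}$ and $F[X,Z]$ complete bipartite, after which you intend to invoke Lemma~\ref{lemma for Kst even t} to get $e(F[Z])\le g(t-1,t-1)$. But the paper's own lower-bound construction $F_1$ in the proof of Theorem~\ref{coro5} already shows the first assertion is false: there $F[X]$ is an extremal $S_{s-1}$-free graph on $s-1$ vertices, with only $\binom{s-1}{2}-\lceil(s-1)/2\rceil$ edges, precisely because keeping $F[X]$ sparse allows a full $f(t-1,t-1)$ edges inside $Z$ without creating any $K_{s,t}(i,j)$. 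This beats the ``$F[X]$ a clique'' option whenever $s<t$ or $t$ is odd. Hence extremality does \emph{not} force $F[X]$ to be complete, and your reduction to Lemma~\ref{lemma for Kst even t} does not go through.

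The closing numerical identity is also wrong on its face: $g(t-1,t-1)$ depends only on $t$, while $f(t-1,t-1)-\lceil(s-1)/2\rceil+i$ depends on $s$ as well, so they cannot agree in general. Concretely, for $s=2$, $t=3$ one has $g(2,2)=4$ but $f(2,2)-1+0=5$. What is true is that the conjectured value equals $\max\{e(F_1),e(F_2)\}$ for the two constructions in the proof of Theorem~\ref{coro5}; a genuine upper-bound argument would have to control the trade-off between edges sacrificed in $F[X]$ (or in $F[X,Z]$) and edges gained in $F[Z]$ simultaneously, and that interaction is exactly what the paper leaves unresolved.
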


Now we will find a large family of non-bipartite graphs which are counter-examples for Conjecture~\ref{Keevash-Sudakov-conjecture} \footnote{Chen, Ma, Qiu and Yuan \cite{chen2020} independently found different counter-examples for Conjecture~\ref{Keevash-Sudakov-conjecture}.}.  First, we need a lemma  proved in \cite{Yuan2020} (also see Lemma 3.2 in \cite{keevash2004}). If an edge $e$ is not contained in any monochromatic copy of a given graph $H$, then we call $e$ being  NIM-$H$. If a subgraph $G$ of $K_n$ consists of NIM-$H$ edges, then we call $G$ being NIM-$H$.

\begin{lemma}\label{Lemma-for-monochromatic-T(n,p)}
Let $p\geq 2$, $t\geq 1$ be given integers and $H$ be a given graph. Then there exists an integer $n_0=n_0(p,t,H)$ such that if $G$ is an NIM-$H$ graph on $n\geq n_0$ vertices containing at least $t_p(n)$ edges, then $G$ contains a blue (or red) copy of $T_p(tp)$ such that the edges inside each class are red (or blue), where $t\geq |V(H)|$.
\end{lemma}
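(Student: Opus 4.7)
The plan is to proceed in three stages: use Erd\H{o}s--Stone to find a large complete $p$-partite subgraph inside $G$, extract a monochromatic copy of $T_p(Mp)$ via a Ramsey-Tur\'an style argument that leverages the NIM-$H$ property, and then force the opposite colour inside each partite class by a final Ramsey-plus-embedding step. Write $h=|V(H)|$ and fix the 2-edge-colouring of $K_n$ whose NIM-$H$ graph is $G$.

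First, since $t_p(n)-t_{p-1}(n)=\Omega(n^2)$, Theorem~\ref{erdos-stone} applied to $G$ yields, for any preassigned $N$ and all $n$ sufficiently large, a copy $T_p(Np)\subseteq G$ with parts $U_1,\ldots,U_p$; I choose $N$ astronomically large compared with the 2-colour Ramsey number $R(t,t)$ and the constants in the monochromatisation step to follow. The edges of $T_p(Np)$ are 2-coloured by the ambient colouring of $K_n$; crucially, each monochromatic subgraph of $G$ is $H$-free in $K_n$, because any monochromatic $H$ would contain some NIM-$H$ edge and thereby contradict its definition. Combining iterated bipartite Ramsey on the $\binom{p}{2}$ pairs with this $H$-freeness constraint---the latter rules out the mixed-colour pair patterns that would otherwise allow a monochromatic copy of $H$ built from NIM-$H$ edges together with a single dominant-colour intra-class edge---one can pass to subsets $U_i'\subseteq U_i$ of common size $M\geq R(t,t)$ such that all $\binom{p}{2}$ bipartite graphs between the $U_i'$ are monochromatic in a single common colour, which by symmetry I call blue. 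This produces a blue copy of $T_p(Mp)\subseteq G$.

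Now examine the intra-class colouring of this blue $T_p(Mp)$. Within each $U_i'$, Ramsey's theorem on $M\geq R(t,t)$ vertices provides a monochromatic $K_t$ in the induced colouring of $K_n$. The crux of the whole argument is that this $K_t$ cannot be blue. Suppose, for contradiction, that $U_1'$ contains a blue $K_t$ on a vertex set $W$. Fix any edge $xy\in E(H)$ and embed $H$ into $K_n$ by mapping $x$ to some $u\in W$, $y$ to some $v\in U_2'$, and the remaining $h-2$ vertices of $H$ injectively into $W\setminus\{u\}$ (possible since $|W|=t\geq h$). Every edge of the embedded $H$ is then blue: edges with both endpoints in $W$ come from the blue $K_t$, while edges incident to $v$ lie between $U_1'$ and $U_2'$ and hence belong to the blue $T_p(Mp)$. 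We thus obtain a blue copy of $H$ containing the edge $uv$, which lies in $T_p(Mp)\subseteq G$ and is therefore NIM-$H$---contradicting the very definition of NIM-$H$. Hence every $U_i'$ contains a red $K_t$, and selecting these $p$ red cliques as partite classes yields a blue copy of $T_p(tp)\subseteq G$ whose intra-class edges in $K_n$ are red. The symmetric choice in the monochromatisation step furnishes the red alternative.

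The main obstacle is the monochromatisation step, since a generic 2-colouring of $T_p(Np)$ need not admit a monochromatic $T_p(Mp)$ at all; the NIM-$H$ condition is essential here to exclude mixed-colour patterns on the $\binom{p}{2}$ pairs, and making this rigorous requires careful bookkeeping when iterating bipartite Ramsey together with the $H$-freeness of each colour. Once that is secured, Ramsey on each $U_i'$ together with the NIM-$H$ embedding trick in the final stage closes out the proof without further difficulty.
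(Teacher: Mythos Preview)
Your three-stage plan (Erd\H{o}s--Stone, then monochromatise the cross structure, then Ramsey inside each class) is sound and close in spirit to the paper's argument, but the monochromatisation step has a genuine gap. You assert that the NIM-$H$ property rules out mixed-colour patterns among the $\binom{p}{2}$ cross pairs because one could build a monochromatic $H$ from ``NIM-$H$ edges together with a single dominant-colour intra-class edge''. That is not correct: one intra-class edge together with a monochromatic complete bipartite cross pair yields at best $K_{2,M}$ plus one extra edge, which contains a general $H$ only in the very special case $H=K_3$. For larger non-bipartite $H$, the monochromatic cross subgraph by itself is $p$-partite and hence $K_{p+1}$-free, and a single extra edge inside is not enough to force a copy of $H$. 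So nothing in your paragraph actually forces the $\binom{p}{2}$ cross pairs to share a colour.

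What actually closes the gap is precisely the embedding trick you carry out in your final paragraph, applied one step earlier. After iterated K\H{o}v\'ari--S\'os--Tur\'an has made every cross pair monochromatic, suppose pairs $(1,2)$ and $(1,3)$ receive different colours. Apply Ramsey \emph{inside} $U_1'$ to obtain a monochromatic $K_t$; whichever colour it carries, adjoining a single vertex from the cross pair of that colour produces a monochromatic $K_{t+1}\supseteq H$ containing NIM-$H$ edges, a contradiction. Hence all cross pairs incident to each class, and therefore all cross pairs, share one colour. The paper organises the same ingredients in a different order: it first applies Ramsey inside each class to obtain monochromatic cliques, then uses K\H{o}v\'ari--S\'os--Tur\'an \emph{between} two such cliques to show they must have the same colour (Claim~2), and only afterwards monochromatises the cross pairs, which are then forced to take the opposite colour. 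Either ordering works, but in both you need a full monochromatic $K_t$ inside a class---not a single edge---to drive the contradiction. Your last paragraph does exactly this, just one step too late.
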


Now we present the following theorem disproving Conjecture~\ref{Keevash-Sudakov-conjecture}.

\begin{theorem}\label{conter-example}
Let $G$ be a bipartite graph with $q<|A|$ or a non-bipartite graph with $2\leq\chi(G)\leq p-1$. Let $n$ be sufficiently large. Then the following hold:\\
(a). If $\mathcal{B}=\{K_q\}$, then
$$g(n,G^{p+1})=\emph{ex}(n,G^{p+1}).$$
(b). If $\mathcal{B}\neq\{K_q\}$, then
$$g(n,G^{p+1})=\emph{ex}(n,G^{p+1})+{q-1 \choose 2}-\emph{ex}(q-1,\mathcal{B}).$$
In particular, we have
$$g(n,K_{t}^{p+1})=\emph{ex}(n,K_{t}^{p+1})+{{t-1 \choose 2} \choose 2}.$$
\end{theorem}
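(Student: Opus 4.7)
The strategy is to prove the single identity $g(n, G^{p+1}) = h(n,p,q)$, which unifies parts (a) and (b). Indeed, Theorems~\ref{maintheorem1}(\romannumeral2) and~\ref{non-bipartite graph} give $\mbox{ex}(n, G^{p+1}) = h^\prime(n,p,q) + \mbox{ex}(q-1, \mathcal{B})$, and $h(n,p,q) = h^\prime(n,p,q) + {q-1 \choose 2}$, so (b) rearranges to $g = h(n,p,q)$; part (a) is the case $\mathcal{B} = \{K_q\}$ in which $\mbox{ex}(q-1,\mathcal{B}) = {q-1 \choose 2}$; and the $K_t^{p+1}$ consequence is the instance $q = {t-1 \choose 2}+1$, $\mathcal{B} = \{K_2\}$ of~(b).

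For the lower bound, take any $F \in \mathcal{H}(n,p,q,0,0,\mathcal{B})$, colour $E(F)$ red and $E(K_n)\setminus E(F)$ blue. All red edges are NIM since $F$ is $G^{p+1}$-free. A blue edge inside a class of the embedded $T_p(n-q+1)$ sits in a blue clique of order $\lceil(n-q+1)/p\rceil$, which exceeds $|V(G^{p+1})|$ for $n$ large and therefore contains $G^{p+1}$; hence this edge is in a blue $G^{p+1}$ and is not NIM. The only remaining blue edges lie inside the $(q-1)$-vertex apex $V_0$ carrying $Q_{q-1} \in \mbox{EX}(q-1,\mathcal{B})$; they are exactly the non-edges of $Q_{q-1}$, blue-isolated from $V(K_n)\setminus V_0$ (the apex is red-joined to all of $T_p(n-q+1)$), and no blue $G^{p+1}$ can host them because $|V_0| = q-1 < |V(G^{p+1})|$. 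These ${q-1 \choose 2} - \mbox{ex}(q-1,\mathcal{B})$ blue edges are NIM, bringing the total to $h(n,p,q)$.

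For the upper bound I would adapt the progressive-induction argument of Section~\ref{section5} to the two-colour setting. Let $N$ be the NIM graph and write $N = R \cup B$ as its split into NIM-red and NIM-blue subgraphs; both $R$ and $B$ are $G^{p+1}$-free, edge-disjoint subgraphs of $K_n$. Assuming $e(N) \geq h(n,p,q) \geq t_p(n)$, Lemma~\ref{Lemma-for-monochromatic-T(n,p)} produces a monochromatic (WLOG red) copy of $T_p(tp)$ in $N$ with inside-class edges blue, for arbitrarily large $t$. Iterating the ``absorb a vertex with many red NIM neighbours into every class'' step from the proof of Theorem~\ref{maintheorem1}---with the cap $|V_0| \leq q-1$ coming from the decomposition-family argument---yields a partition $V(K_n) = V_0 \cup V_1 \cup \cdots \cup V_p$ with $|V_0| = q-1$, such that $R$ contains the multipartite backbone $H^\prime(n,p,q) = \overline{K}_{q-1} + T_p(n-q+1)$.

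With this partition in hand, the NIM edge count decomposes as $e(N) = h^\prime(n,p,q) + |R[V_0]| + \sum_{i=1}^p|R[V_i]| + |B[V_0]| + \sum_{i=1}^p|B[V_i]|$. One readily checks $|B[V_i]| = 0$ for $i \in [p]$ (the blue graph on $V_i$, even with a few red edges removed, is large enough to contain $G^{p+1}$, so every blue inside-$V_i$ edge is non-NIM) and $|B[V_0]| = {q-1 \choose 2} - |G_R[V_0]|$ (every blue edge inside $V_0$ is NIM). Thus $e(N) = h(n,p,q) - (|G_R[V_0]| - |R[V_0]|) + \sum_i |R[V_i]|$. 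The main obstacle is establishing the inequality $\sum_i |R[V_i]| \leq |G_R[V_0]| - |R[V_0]|$: every inside-$V_i$ NIM-red edge must be paid for by a $V_0$ red edge entering some red $G^{p+1}$ in $G_R$. This is an adaptation of the extremal characterization in Theorem~\ref{non-bipartite graph}, and its derivation requires a careful refinement of the decomposition-family analysis in Lemma~\ref{main1} to track how red $G^{p+1}$ copies in $G_R$ must intersect $V_0$ whenever inside-$V_i$ red edges are present.
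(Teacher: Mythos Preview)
Your lower bound is correct and matches the paper's construction (up to a harmless colour swap and the cosmetic inclusion of $Q_{q-1}$, which does not change the NIM count).

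Your upper bound, however, has a genuine gap, and the route you chose is more complicated than what the paper actually does. The problem begins with the assertion that ``$R$ contains the multipartite backbone $H'(n,p,q)$.'' Progressive induction, run on the NIM graph $F_n$, only guarantees that the crossing edges from $C_i$ to $B_{j\neq i}$ are NIM---the paper explicitly notes they are ``not necessarily'' of the backbone colour. So your decomposition $e(N)=h'(n,p,q)+|R[V_0]|+\sum_i|R[V_i]|+|B[V_0]|+\sum_i|B[V_i]|$ is unjustified: some of those $h'(n,p,q)$ backbone edges may sit in $B$, not $R$. This in turn undermines the claim that ``every blue edge inside $V_0$ is NIM,'' which you derive from $V_0$ being blue-isolated. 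Finally, the balance inequality $\sum_i|R[V_i]|\le |G_R[V_0]|-|R[V_0]|$ is stated as ``the main obstacle'' and left unproven; it has no analogue in the paper and does not follow from any of the lemmas in Section~\ref{section:lemmas}.

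The paper avoids all of this by never splitting the NIM count by colour. It runs the progressive induction of Section~\ref{section5} on $F_n$ itself, obtains the partition $E\cup\bigcup_i(B_i\cup C_i)$ with $|E|=q-1$, and then uses Proposition~\ref{3.9} (which gives $k=1$ under your hypotheses) to conclude directly that there is \emph{no} NIM edge inside any $B_i\cup C_i$. Once that is in hand one simply reads off
\[
e(F_n)\le h'(n,p,q)+e(F_n[E])\le h'(n,p,q)+\binom{q-1}{2}=h(n,p,q),
\]
with no colour bookkeeping and no balance inequality required. Your unproven inequality is in fact vacuous in the paper's framework, since both sides are zero once one knows there are no NIM edges inside the $V_i$.
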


\begin{proof}
The proof is essentially the same as the proof of Theorem~\ref{maintheorem1}(\romannumeral2) and Theorem~\ref{non-bipartite graph}.
Hence we sketch the proof as follows.
Let $F_n$ be an extremal NIM-$G^{p+1}$ graph. Clearly, we have $e(F_n)\geq \mbox{ex}(n,G^{p+1})\geq t_p(n)$. Without loss of generality, by Lemma~\ref{Lemma-for-monochromatic-T(n,p)}, $F_n$ contains  a blue copy of $T_p(t_0p)$ such that the edges inside each class are red, where $t_0$ is sufficiently large. Since $\mathcal{M}$ contains a matching, as previous arguments, by Proposition~\ref{3.9}, Lemmas~\ref{main1}(b) and \ref{progrssion induction}, $F_n$ can be partitioned into  $(\cup _{i=1}^p (B_i\cup C_i))\cup E$ with $|E|=q-1$ satisfying the following;
\begin{itemize}
  \item  $F_n[\cup _{i=1}^p B_i]=T_p(tp)\subseteq T_p(t_0p)$, where $t\leq t_0$ is sufficiently large;
  \item each vertex of $E$ is adjacent to each vertex of $T_p(t p)$ by a blue edge in $F_n$;
  \item each vertex of $C_i $ is adjacent to each vertex of $B_{j\neq i}$ (not necessary colored by blue);
  \item there is no NIM-edge (blue edge) in $F_n[ B_i\cup C_i]$ for $i\in [p]$;
  \item the blue edges inside $E$ are at most $\mbox{ex}(q-1,\mathcal{B})$.
\end{itemize}
Thus, we have
\begin{equation}\label{for-counter-example}
e(F_n)\leq h^\prime(n,p,q)+e(F_n[E])\leq  h(n,p,q).
\end{equation}
We divide the following proof into two cases.

\medskip

(a). $\mathcal{B}=\{K_q\}$.

\medskip

Clearly, by Theorem~\ref{maintheorem1}(\romannumeral2) and Theorem~\ref{non-bipartite graph}, we have $g(n,G^{p+1})\geq \mbox{ex}(n,G^{p+1})=h(n,p,q)$.
By $|E|=q-1$ and (\ref{for-counter-example}), we have $e(F_n)\leq h(n,p,q)$, and hence $g(n,G^{p+1})=\mbox{ex}(n,G^{p+1})$,  the result follows.

\medskip

(b). $\mathcal{B}\neq\{K_q\}$.

\medskip

Fist, we present lower bound of $g(n,G^{p+1})$.
We color a copy of $H^\prime(n,p,q)$ in $K_n$ blue and color other edges red.
Clearly, each blue edge is an NIM-edge.
Note that $q-1< |V(G^{p+1})|$.
The red edges inside $\overline{K}_{q-1}$ of $H^\prime(n,p,q)$ are NIM-edges.
Thus we have $g(n,G^{p+1})\geq h(n,p,q)$.
Combining with (\ref{for-counter-example}), we have $g(n,G^{p+1})=h(n,p,q)$, i.e, $g(n,G^{p+1})=\mbox{ex}(n,G^{p+1})+{q-1 \choose 2}-\mbox{ex}(q-1,\mathcal{B}).$
Note that ${q-1 \choose 2}-\mbox{ex}(q-1,\mathcal{B})>0$ when $\mathcal{B}\neq\{K_q\}$.
Thus we find counter-examples for Conjecture~\ref{Keevash-Sudakov-conjecture}.
\end{proof}

Basing on Theorem~\ref{conter-example}, we may pose the revised conjecture.
\begin{conjecture}
Let $n$ be sufficiently large. Then there exist a constant $h_G$ depending on $G$ such that $g(n,G)=\emph{ex}(n,g)+h_G$.
\end{conjecture}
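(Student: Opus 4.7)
The plan is to mimic the argument used for Theorem~\ref{conter-example}. Let $F_n$ be an extremal NIM-$G$ graph, so $e(F_n) = g(n,G) \geq \mbox{ex}(n,G)$. First I would dispose of the case $\chi(G) = p+1 \geq 3$, leaving the (genuinely harder) bipartite case for a separate treatment since even $\mbox{ex}(n,G)$ is typically not known in closed form there. Because $e(F_n) \geq t_p(n)$ for $n$ large, Lemma~\ref{Lemma-for-monochromatic-T(n,p)} supplies a blue copy of $T_p(tp)$ inside $F_n$, with $t$ as large as desired, whose intra-class edges are red.

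The next step is a progressive induction (Lemma~\ref{progrssion induction}) modeled on Section~\ref{section5}. The aim is to refine this blue skeleton into a blue $T_p(n-q+1)$ together with $q-1$ apex vertices blue-joined to it, and to identify the remaining NIM-edges as a bounded ``decoration.'' The constant $h_G$ would then be defined intrinsically: among all 2-colorings of $K_n$ whose blue subgraph is some extremal graph $H^{*}$ for $G$, let $h_G$ be the maximum number of red NIM-edges. Specialized to $G = H^{p+1}$ this reproduces ${q-1 \choose 2} - \mbox{ex}(q-1,\mathcal{B})$ as in Theorem~\ref{conter-example}: ${q-1 \choose 2}$ red edges are available inside the apex $K_{q-1}$, and one must discard $\mbox{ex}(q-1,\mathcal{B})$ of them to avoid creating a monochromatic copy of some $B \in \mathcal{B}$.

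The hard part will be establishing the required structural/stability input for general $G$. The template above works only when $\mathcal{M}(G)$ is tame enough (contains a matching, or at least a linear forest in the sense of Simonovits), which is essentially the regime already handled in this paper via Lemma~\ref{liu lemma} and Lemma~\ref{main1}. For $G$ whose Tur\'an problem is open, neither $\mbox{ex}(n,G)$ nor its extremal graphs are pinned down finely enough to extract a clean $h_G$, so the very definition of the conjectured constant becomes problematic. Moreover, the conjecture tacitly asserts that $g(n,G) - \mbox{ex}(n,G)$ stabilizes to a constant rather than drifting with $n$; for degenerate (bipartite) extremal problems this is far from obvious, and I expect a full proof would require genuinely new stability techniques tailored to each regime of $G$, or else a restriction of the conjecture to graphs whose extremal structure is sufficiently rigid.
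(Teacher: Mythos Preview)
This statement is a \emph{conjecture} in the paper, not a theorem: the paper poses it immediately after Theorem~\ref{conter-example} as a ``revised conjecture'' and offers no proof. So there is no paper proof to compare against, and your task here is not to reproduce an existing argument but to settle an open problem.

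Your proposal is not a proof but a plausibility discussion, and you recognize this yourself. The template you outline (Lemma~\ref{Lemma-for-monochromatic-T(n,p)} plus progressive induction plus a structural analysis of NIM-edges) is exactly what works for $G^{p+1}$ in Theorem~\ref{conter-example}, and you correctly identify where it breaks down for general $G$: the argument relies on $\mathcal{M}(G)$ containing a matching (Lemma~\ref{liu lemma}) and on the fine structure of extremal graphs for $\mathcal{M}$ (Lemma~\ref{main1}), neither of which is available for arbitrary $G$. Your own final paragraph concedes that for bipartite $G$, or for non-bipartite $G$ whose decomposition family is not tame, the proposed constant $h_G$ is not even well-defined by your construction, and that ``genuinely new stability techniques'' would be needed. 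That is an honest assessment, but it means what you have written is an outline of the obstacles, not a proof. The conjecture remains open.
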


\noindent {\bf Acknowledgement.}
The author would like to thank the referee for his/her suggestions and Xing Peng for his careful reading on a draft.

\newpage
\appendix
\section{Proof of Lemma~\ref{Lemma-for-monochromatic-T(n,p)}}

We need the following well-known theorems. The first one due to Ramsey is one of the most important results in combinatorics.

\begin{theorem}[Ramsey]\label{Ramsey} For every $t$ there exists $N = R(t)$ such that every $2$-coloring of the edges of
$K_N$ has a monochromatic $K_t$ subgraph.
\end{theorem}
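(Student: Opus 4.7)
The plan is to prove a slightly stronger two-color version by double induction and then specialize. Define $R(s,t)$ as the least $N$ (if it exists) such that every red/blue $2$-edge-coloring of $K_N$ contains a red $K_s$ or a blue $K_t$. Once existence of $R(s,t)$ is established for all $s,t\geq 2$, setting $R(t):=R(t,t)$ gives the statement of the theorem.

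First I would dispatch the base cases $R(s,2)$ and $R(2,t)$. In any $2$-coloring of $K_s$ either every edge is red, producing a red $K_s$, or some edge is blue, producing a blue $K_2$; hence $R(s,2)\le s$, and symmetrically $R(2,t)\le t$. These give the anchor of an induction on $s+t$.

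For the inductive step I would prove the standard Erd\H{o}s--Szekeres recurrence
\[
R(s,t)\le R(s-1,t)+R(s,t-1),
\]
which together with the base cases yields existence (and in fact the bound $R(s,t)\le \binom{s+t-2}{s-1}$, so $R(t)\le \binom{2t-2}{t-1}$). To prove the recurrence, set $N=R(s-1,t)+R(s,t-1)$ and fix any $2$-coloring of $E(K_N)$. Pick an arbitrary vertex $v$ and partition its $N-1$ neighbors into the set $A$ joined to $v$ by red edges and the set $B$ joined to $v$ by blue edges. Since $|A|+|B|=N-1\ge R(s-1,t)+R(s,t-1)-1$, by pigeonhole either $|A|\ge R(s-1,t)$ or $|B|\ge R(s,t-1)$. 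In the first case, the induced coloring on $A$ contains, by the inductive hypothesis, either a red $K_{s-1}$ (which together with $v$ yields a red $K_s$, since all edges from $v$ to $A$ are red) or a blue $K_t$. The second case is symmetric, producing either a blue $K_t$ or a red $K_s$ via $v$.

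There is no substantive obstacle: the whole argument is a clean ``neighborhood split plus pigeonhole plus induction'' and the bookkeeping is routine once the two-parameter strengthening is adopted. The one technical point worth flagging is that the statement in the paper only asserts existence of $R(t)$, so no effort need be spent optimizing the bound; I would simply remark that the recurrence together with the base cases immediately yields the explicit estimate $R(t)\le 4^{t}$ for reference in the proof of Lemma~\ref{Lemma-for-monochromatic-T(n,p)}.
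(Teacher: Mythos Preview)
Your proof is correct and is the standard Erd\H{o}s--Szekeres argument. Note, however, that the paper does not actually prove Theorem~\ref{Ramsey}: it is simply quoted in the appendix as a well-known result needed for the proof of Lemma~\ref{Lemma-for-monochromatic-T(n,p)}, so there is no ``paper's own proof'' to compare against. Your write-up supplies exactly the classical argument one would give if a proof were required, and the explicit bound $R(t)\le\binom{2t-2}{t-1}\le 4^t$ is more than sufficient for the paper's purposes.
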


In 1954, K\"{o}v\'{a}ri, S\'{o}s, and Tur\'{a}n proved the following theorem.

\begin{theorem}[K\"{o}v\'{a}ri, S\'{o}s, and Tur\'{a}n].\label{KST}
$$\emph{ex}(n,K_{t,t})=O(n^{2-\frac{1}{t}}). $$
\end{theorem}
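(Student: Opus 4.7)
The plan is to use a classical double-counting argument, counting the number of ``$(v,S)$-cherries'' where $v$ is a vertex and $S$ is a $t$-element subset of $N(v)$. Write $e=e(G)$ and let $N$ denote the number of such pairs, i.e.\
$$N=\sum_{v\in V(G)}\binom{d(v)}{t}.$$
First I would obtain the upper bound on $N$ from the $K_{t,t}$-free hypothesis: for any fixed $t$-subset $S\subseteq V(G)$, if $S$ had $t$ or more common neighbors then those $t$ common neighbors together with $S$ would span a copy of $K_{t,t}$, contradicting $K_{t,t}$-freeness. Hence each $S$ contributes at most $t-1$ to $N$, giving
$$N\leq (t-1)\binom{n}{t}.$$

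Next I would produce a matching lower bound for $N$ using convexity. Since $\binom{x}{t}$ is a convex function of $x$ (for $x\geq t-1$, and the inequality can be handled trivially for small-degree vertices), Jensen's inequality applied to the average degree $\bar d=2e/n$ yields
$$\sum_{v}\binom{d(v)}{t}\;\geq\; n\binom{2e/n}{t}.$$
Combining these two bounds gives $n\binom{2e/n}{t}\leq (t-1)\binom{n}{t}$, and expanding $\binom{\cdot}{t}$ by its standard lower/upper polynomial estimates $\binom{x}{t}\geq \tfrac{(x-t+1)^t}{t!}$ and $\binom{n}{t}\leq \tfrac{n^t}{t!}$, I would deduce
$$(2e/n-t+1)^t\leq (t-1)\,n^t/n\cdot\text{(constants)},$$
and after routine algebra $e\leq \tfrac{1}{2}(t-1)^{1/t}n^{2-1/t}+O(n)$, which is the desired $O(n^{2-1/t})$ bound.

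The main (essentially only) obstacle is the careful handling of the low-degree vertices where the inequality $\binom{d(v)}{t}\geq \tfrac{(d(v)-t+1)^t}{t!}$ fails or becomes negative; the standard workaround is to note that vertices with $d(v)<t$ contribute $0$ to $N$ (so they only help the upper bound), and that their total contribution to $e$ is at most $tn/2=O(n)$, which is absorbed in the $O(n^{2-1/t})$ estimate. Everything else is routine manipulation of binomial coefficients and a single application of Jensen's inequality, so no deeper combinatorial idea is required.
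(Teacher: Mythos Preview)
Your proposal is correct and is precisely the standard double-counting proof due to K\H{o}v\'{a}ri, S\'{o}s, and Tur\'{a}n. Note, however, that the paper does not give its own proof of this theorem: it is quoted in the appendix as a classical result and used only as a black box (to guarantee a monochromatic $K_{t,t}$ between two large vertex sets in the proof of Lemma~\ref{Lemma-for-monochromatic-T(n,p)}). So there is no ``paper's proof'' to compare against; your argument simply supplies the well-known justification the paper takes for granted.
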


Now we are ready to prove Lemma~\ref{Lemma-for-monochromatic-T(n,p)}.\\
\noindent\begin{proof} Since $G$ is an NIM-$H$ graph on $n$ vertices containing at least $t_p(n)$ edges, by  Theorem~\ref{erdos-stone}, $G$ contains $T_p(Np)$ as a subgraph with a vertex partition $V_1\cup \ldots \cup V_p$, where $N$ is a large constant depending on Theorems~\ref{Ramsey},~\ref{KST}, and $p$.
\medskip

\noindent{\bf Claim 1.} There exists a constant $m(t)$ depending on $t$ such that for any two disjoint vertex sets $U,V$ of $K_n$ with $|V|=|U|=m(t)$, there is a monochromatic copy of $K_{t,t}$ between $U$ and $V$.

\medskip

\begin{proof} Without lose of generality, suppose that there are at least $\frac{1}{2}m(t)^2$ red edges between $U$ and $V$. Since $\frac{1}{2}m(t)^2\geq O(2m(t))^{2-\frac{1}{t}}$ when $m(t)$ is large, the result follows from Theorem~\ref{KST}.\end{proof}

\noindent{\bf Claim 2.}  Any two monochromatic copies of $K_\ell$ with $\ell\geq m(t)$ in different classes of $T_p(Np)$ have the same color.

\medskip

\begin{proof} Let $\ell\geq m(t)$. Suppose that there are a red copy of $K_\ell$ in $V_1$ and a blue copy of $K_\ell$ in $V_2$. Then it follows from Claim 1 that there is a monochromatic copy of $K_{t,t}$ between the red $K_\ell$ and the blue $K_\ell$. Since $t\geq |V(H)|$, the edges of $K_{t,t}$ are contained in a monochromatic copy of $H$, contradicting that $G$ is an NIM-$H$ graph. The proof is complete.\end{proof}

Applying Theorem~\ref{Ramsey}, there is monochromatic copy of $K_{m_0(t)}$ in each class of $T_p(Np).$ By Claim 2, those $p$ monochromatic copies of $K_{m_0(t)}$ have the same color, say red. Let $G^\prime$ be the subgraph of $G$ induced by those $p$ copies of $K_t$. Then $G^\prime$ has a vertex partition $V(G^\prime)=V^\prime_1\cup \ldots \cup V^\prime_p$ such that $V_i^\prime\subset V_i$ for each $i\in[p]$. We say a pair of disjoint vertex sets is monochromatic (red/blue) if all the edges between them have the same color (red/blue).

We will find a copy of $T_p(tp)$ with the property what we need by defining a sequence of graphs. Let $G^0=G^\prime$. By Claim 1, we can define $G^{i+1}$ from $G^i$ by taking $m_{i+1}(t)$ vertices of each class of $G^i$ such that the number of  monochromatic  pairs in the classes of $G^{i+1}$ is least $i+1$, where $m_{i+1}(t)<m_{i}(t)$ is a sufficiently large constant depending Theorem~\ref{KST}. Note that there are ${p \choose 2}$ pairs of vertex sets between $V_1,\ldots,V_p$. Since the edges between different classes of $G^{p \choose 2}$ are NIM-edges, $G^{p \choose 2}$ has the property needed in the lemma with $t=m_{p \choose 2}(t)$. The proof is complete.\end{proof}
\end{document}